\chardef\bslash=`\\ 
\newtheorem{thm}{Theorem}[section]
\newtheorem{lem}[thm]{Lemma}
\newtheorem{prop}[thm]{Proposition}
\theoremstyle{definition}
\theoremstyle{remark}
\newcommand{\eval}[2][\right]{\relax
  \ifx#1\right\relax \left.\fi#2#1\rvert}
\begin{document}
\title{Primitive point packing}

\author[Antoine Deza]{Antoine Deza}
\address{McMaster University, Hamilton, Ontario, Canada}
\email{deza@mcmaster.ca} 

\author[Lionel Pournin]{Lionel Pournin}
\address{Universit{\'e} Paris 13, Villetaneuse, France}
\email{lionel.pournin@univ-paris13.fr}

\begin{abstract}
A point in the $d$-dimensional integer lattice $\mathbb{Z}^d$ is primitive when its coordinates are relatively prime. Two primitive points are multiples of one another when they are opposite, and for this reason, we consider half of the primitive points within the lattice, the ones whose first non-zero coordinate is positive. We solve the packing problem that asks for the largest possible number of such points whose absolute values of any given coordinate sum to at most a fixed integer $k$. We present several consequences of this result at the intersection of geometry, number theory, and combinatorics. In particular, we obtain an explicit expression for the largest possible diameter of a lattice zonotope contained in the hypercube $[0,k]^d$ and, conjecturally of any lattice polytope in that hypercube.
\end{abstract}
\maketitle

\section{Introduction}\label{PPP.sec.0}

Lattice polytopes appear in many branches of mathematics, as for instance in algebraic geometry where they are associated with certain toric varieties. It is noteworthy that, in relation with the study of these and similar objects, methods from combinatorics and algebraic geometry have been beneficial to both fields \cite{AdiprasitoHuhKatz2018,BogartHaaseHeringLorenzNillPaffenholzRoteSantosSchenck2015,DickensteinDiRoccoPiene2009,Huh2012,Pommersheim1993,Stanley1980}. Another branch of mathematics where lattice polytopes turn up is combinatorial optimization. In particular, they encode the feasible domain of a number of optimization problems \cite{AprileCevallosFaenza2018,DelPiaMichini2016,Naddef1989,NillZiegler2011}. In this other context, an important quantity is the largest diameter a polytope can possibly have in terms of a fixed combinatorial or geometric property. Here, by the diameter of a polytope, we mean the diameter of the graph made up of its vertices and edges. This quantity is tightly linked to the complexity of pivoting methods for solving the corresponding optimization problem and it has been extensively studied, for instance as a function of the dimension and number of facets of the polytope \cite{KalaiKleitman1992,KleeWalkup1967,Naddef1989,Sukegawa2019}. However, as the disproval of the Hirsch conjecture \cite{Santos2012} shows, its behavior remains elusive.

Yet another field where lattice polytopes appear, lies at the intersection of geometry and number theory \cite{Andrews1963,BrionVergne1997,Ehrhart1967a,Ehrhart1967b}. For instance, the density of primitive points---points whose coordinates are relatively prime---within the lattice $\mathbb{Z}^d$ is equal to $1/\zeta(d)$, where $\zeta$ denotes Riemann's zeta function \cite{HardyWright1938,KranakisPocchiola1994,Nymann1970}. From that observation, a sharp estimate for the largest possible diameter of a lattice zonotope contained in the hypercube $[0,k]^d$ when $d$ is fixed and $k$ grows large has been derived recently \cite{DezaPourninSukegawa2020}. 
The case of zonotopes---polytopes obtained as a Minkowski sum of line segments---is particularly interesting in relation to the problem of the diameter of polytopes. Indeed, it is conjectured that the largest possible diameter of a lattice polytope contained in the hypercube $[0,k]^d$ is always achieved by a zonotope \cite{DezaManoussakisOnn2018}. In the case of lattice zonotopes, the problem can be reformulated as follows in terms of primitive points.

Let $\mathbb{P}^d$ be the set of the primitive points contained in $\mathbb{Z}^d$. Consider a subset $\mathcal{X}$ of $\mathbb{P}^d$ whose first non-zero coordinate of elements is positive and call
$$
\kappa(\mathcal{X})=\max\left\{\sum_{x\in\mathcal{X}}|x_i|:1\leq{i}\leq{d}\right\}\!\mbox{,}
$$
where the coordinates of a point $x$ from $\mathbb{R}^d$ are denoted by $x_1$ to $x_d$. In other words, $\kappa(\mathcal{X})$ is the sum over all the points in $\mathcal{X}$ of the absolute value of one of their coordinates for which that sum is the greatest. Using this quantity, we can ask the question of \emph{how large the cardinality of $\mathcal{X}$ can be under the requirement that $\kappa(\mathcal{X})$ does not exceed a given integer $k$}. Incidentally, the answer to this primitive point packing problem at the intersection of geometry, number theory, and combinatorics coincides with the largest possible diameter of a lattice zonotope contained in the hypercube $[0,k]^d$. In this article, we settle this problem completely by giving an explicit expression for its solution, thus also solving the question on the largest possible diameter of lattice zonotopes and, conjecturally, the more general one about lattice polytopes.

Let us describe our main results in more details. Following the notation introduced in \cite{DezaPourninSukegawa2020}, we refer to the solution to our packing problem as
$$
\delta_z(d,k)=\max_{\mathcal{X}\in\mathbb{P}^d_\circ}\left\{|\mathcal{X}|:\kappa(\mathcal{X})\leq{k}\right\}\!\mbox{.}
$$

Here $\mathbb{P}^d_\circ$ is the subset of $\mathbb{P}^d$ made up of the points whose first non-zero coordinate is positive. Note that an expression for $\delta_z(d,k)$ is already known when $k$ is less than $2d$. Indeed, it is proven in \cite{DezaManoussakisOnn2018} that, in this case, 
\begin{equation}\label{PPP.sec.0.eq.0}
\delta_z(d,k)=\left\lfloor\frac{(k+1)d}{2}\right\rfloor\!\mbox{.}
\end{equation}

Denote by $B(d,p)$ the ball of radius $p$ for the $1$-norm, centered on the origin of $\mathbb{R}^d$. Note that this ball is a cross-polytope, the polytope dual to the hypercube. Our first result, which we prove in Section \ref{PPP.sec.1} provides the cardinality of the intersection of $B(d,p)$ with $\mathbb{P}^d_\circ$ and the value of $\kappa$ at that intersection.

\begin{thm}\label{PPP.sec.0.thm.1}
For any positive integers $d$ and $p$,
\begin{itemize}
\item[(i)] $\!\displaystyle\left| B(d,p)\cap\mathbb{P}^d_\circ\right|=\frac{1}{2}\sum_{j=1}^{d}2^j{d\choose{j}}\sum_{i=j}^pc_\psi(i,j)$, and
\item[(ii)] $\displaystyle\kappa\!\left(B(d,p)\cap\mathbb{P}^d_\circ\right)=\frac{1}{2}\sum_{j=1}^{d}2^j{d\choose{j}}\sum_{i=j}^pic_\psi(i,j)$,
\end{itemize}
where
$$
\displaystyle c_\psi(p,d)=\frac{1}{(d-1)!}\sum_{i=1}^ds(d,i)J_{i-1}(p)\mbox{.}
$$
\end{thm}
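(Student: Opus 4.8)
The plan is to give $c_\psi(i,j)$ a combinatorial meaning and then derive both assertions from a single stratification of $B(d,p)\cap\mathbb{P}^d$ by support and signs. Concretely, I claim that $c_\psi(i,j)$ counts the primitive points of $\mathbb{Z}^j$ all of whose coordinates are positive and whose $1$-norm equals $i$. To prove this, I would start from the fact that the compositions of $i$ into $j$ positive parts number $\binom{i-1}{j-1}$, and sort them by the greatest common divisor $\delta$ of their parts: dividing through by $\delta$ gives a bijection with the primitive positive points of $1$-norm $i/\delta$, so Möbius inversion yields the count $\sum_{\delta\mid i}\mu(\delta)\binom{i/\delta-1}{j-1}$. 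Expanding the binomial through the falling-factorial identity $m(m-1)\cdots(m-j+1)=\sum_{\ell=0}^{j}s(j,\ell)m^{\ell}$, dividing by $m$ so that $\binom{m-1}{j-1}=\tfrac{1}{(j-1)!}\sum_{\ell=1}^{j}s(j,\ell)m^{\ell-1}$, and then setting $m=i/\delta$, this becomes $\tfrac{1}{(j-1)!}\sum_{\ell=1}^{j}s(j,\ell)\sum_{\delta\mid i}\mu(\delta)(i/\delta)^{\ell-1}$; recognizing the inner Möbius sum as Jordan's totient $J_{\ell-1}(i)$ recovers exactly the formula defining $c_\psi(i,j)$.

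With this interpretation in hand, assertion (i) follows by stratifying $B(d,p)\cap\mathbb{P}^d$ according to the support of a point. A primitive point with support $S$ of size $j$ is determined by the choice of $S$, of a sign for each of its $j$ nonzero coordinates, and of the tuple of absolute values, which is a primitive positive point of $\mathbb{Z}^j$ whose $1$-norm $i$ ranges over $\{j,\dots,p\}$; here one uses that primitivity of the point is equivalent to primitivity of the tuple of its nonzero entries, the zero coordinates being irrelevant to the greatest common divisor. This produces $\sum_{j=1}^{d}\binom{d}{j}2^{j}\sum_{i=j}^{p}c_\psi(i,j)$ primitive points in $B(d,p)$, and since $\mathbb{P}^d_\circ$ selects exactly one point from each antipodal pair $\{x,-x\}$, the cardinality in (i) is half of this.

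For assertion (ii) I would exploit the symmetry of the configuration. The set $B(d,p)\cap\mathbb{P}^d$ is invariant under permutations of coordinates, so the quantity $\sum_{x}|x_i|$ over this set does not depend on $i$; because $\mathbb{P}^d_\circ$ keeps exactly one of $x$ and $-x$ while $|x_i|=|{-x_i}|$, the same independence holds for $\mathcal{X}=B(d,p)\cap\mathbb{P}^d_\circ$. Hence every coordinate attains the maximum defining $\kappa$, and summing the common value over the $d$ coordinates gives $d\,\kappa(\mathcal{X})=\sum_{x\in\mathcal{X}}\lVert x\rVert_1$. The right-hand side is evaluated by the stratification used for (i), now weighting each point by its $1$-norm $i$: this replaces $c_\psi(i,j)$ by $i\,c_\psi(i,j)$ and, after halving for the antipodal selection, gives $\sum_{x\in\mathcal{X}}\lVert x\rVert_1=\tfrac12\sum_{j=1}^{d}2^j\binom{d}{j}\sum_{i=j}^{p}i\,c_\psi(i,j)$. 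Dividing by the $d$ equal coordinate contributions then yields assertion (ii).

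The routine ingredients are the composition count, the Möbius inversion, and the Stirling/Jordan bookkeeping. The step I expect to require the most care is the closed-form identity for $c_\psi$, where the index shift in $\binom{m-1}{j-1}=\tfrac{1}{(j-1)!}\sum_{\ell}s(j,\ell)m^{\ell-1}$ and the vanishing $s(j,0)=0$ must be tracked so that the Möbius sums land precisely on $J_{\ell-1}$. The other delicate point is the passage, in (ii), from the total $1$-norm to the per-coordinate value, which is where the permutation symmetry of $B(d,p)\cap\mathbb{P}^d$ (rather than of $\mathcal{X}$ itself) does the essential work.
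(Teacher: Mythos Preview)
Your proposal is correct and follows essentially the same route as the paper: the closed form for $c_\psi$ via compositions, M\"obius inversion, the Stirling expansion of $\binom{m-1}{j-1}$, and the identification of Jordan's totient is exactly the paper's Theorem~2.3, and your stratification by support and signs is the face-count on the cross-polytope used in the paper's Lemma~2.1/Proposition~2.2. The only cosmetic difference is in (ii), where the paper invokes invariance of $B(d,p)\cap\mathbb{P}^d_\circ$ under a cyclic group $C$ to equate all coordinate sums, while you use the full coordinate-permutation symmetry of $B(d,p)\cap\mathbb{P}^d$ and then halve; both yield $d\,\kappa(\mathcal{X})=\sum_{x\in\mathcal{X}}\lVert x\rVert_1$, and your final formula correctly carries the factor $1/(2d)$ (the $1/2$ in the theorem's display is a typo, as Proposition~2.2 confirms).
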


In the statement of this theorem, $s(d,i)$ stands for Stirling's numbers of the first kind and $J_i(p)$ for Jordan's totient function, whose precise definitions will be recalled in Section \ref{PPP.sec.1}. The number of lattice points contained in a lattice polytope is related to certain invariants of the associated toric variety \cite{Oda1988} and expressions for that number have been obtained in particular cases. For instance, a formula for the number of lattice points contained in the dilates by a positive integer of the cross-polytope $B(d,1)$ is known \cite{BeckRobins2015}. In this light, it is noteworthy that Theorem \ref{PPP.sec.0.thm.1} provides an expression for the number of primitive points contained in these dilates. Indeed, the intersection of $B(d,p)$ with $\mathbb{P}^d$ is twice as large as its intersection with $\mathbb{P}^d_\circ$.
%
%

Theorem \ref{PPP.sec.0.thm.1} also solves our primitive point packing problem for special values of $k$. Indeed, it is shown in \cite{DezaPourninSukegawa2020} that the intersection of $B(d,p)$ with $\mathbb{P}^d_\circ$ is the unique subset of $\mathbb{P}^d_\circ$ of cardinality $\delta_z(d,k)$ when
$$
k=\kappa\!\left(B(d,p)\cap\mathbb{P}^d_\circ\right)\!\mbox{.}
$$

Hence, for these values of $k$, Theorem~\ref{PPP.sec.0.thm.1} provides an expression for $\delta_z(d,k)$. Now consider the piecewise-linear map $k\mapsto\lambda(d,k)$ that interpolates $\delta_z(d,k)$ linearly between any two consecutive such values of $k$. It follows from Theorem~\ref{PPP.sec.0.thm.1} that, when $k$ is between $\kappa\!\left(B(d,p-1)\cap\mathbb{P}^d_\circ\right)$ and $\kappa\!\left(B(d,p)\cap\mathbb{P}^d_\circ\right)$ the slope of the interpolation is $d/p$. More precisely, for these values of $k$,
$$
\frac{\lambda(d,k)-\left|B(d,p-1)\cap\mathbb{P}^d_\circ\right|}{k-\kappa\!\left(B(d,p-1)\cap\mathbb{P}^d_\circ\right)}=\frac{d}{p}\mbox{.}
$$

We will prove throughout Sections \ref{PPP.sec.3}, \ref{PPP.sec.3.5}, and \ref{PPP.sec.4} that $\delta_z(d,k)$ is equal to $\lfloor\lambda(d,k){\rfloor}$ except for a few notable, but infrequent cases.

\begin{thm}\label{PPP.sec.0.thm.2}
For any fixed $d$, the maps $k\mapsto\delta_z(d,k)$ and $k\mapsto\lfloor\lambda(d,k){\rfloor}$ coincide, except on an infinite subset $\mathbb{E}$ of $\mathbb{N}\mathord{\setminus}\{0\}$ such that,
$$
\lim_{k\rightarrow\infty}\frac{\left|\mathbb{E}\cap[1,k]\right|}{k^{1/(d-1)}}=0\mbox{.}
$$

In addition, $k\mapsto\delta_z(d,k)$ coincides, on $\mathbb{E}$, with $k\mapsto\lfloor\lambda(d,k){\rfloor}-1$.
\end{thm}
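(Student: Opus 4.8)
The plan is to study $\delta_z(d,k)$ shell by shell. Set $\beta_p=|B(d,p)\cap\mathbb P^d_\circ|$ and $K_p=\kappa(B(d,p)\cap\mathbb P^d_\circ)$, fix $p$, and take $k$ with $K_{p-1}\le k<K_p$; write $b=k-K_{p-1}$ and $m=\lfloor db/p\rfloor$, so that $\lfloor\lambda(d,k)\rfloor=\beta_{p-1}+m$ by the discussion preceding the theorem. The first step is the uniform upper bound $\delta_z(d,k)\le\beta_{p-1}+m$. For any admissible family $\mathcal X$ each of the $d$ coordinate sums is at most $k$, hence $\sum_{x\in\mathcal X}\lVert x\rVert_1\le dk$; since $B(d,p-1)\cap\mathbb P^d_\circ$ is exactly the set of the $\beta_{p-1}$ primitive points of smallest $1$-norm and has total $1$-norm $dK_{p-1}$, any family of $\beta_{p-1}+m+1$ points has total $1$-norm at least $dK_{p-1}+(m+1)p>dk$, giving the bound. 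I would immediately upgrade this to an equality analysis: because the $\beta_{p-1}$ points of $1$-norm below $p$ are strictly the lightest, when $p\mid db$ the budget $dk=dK_{p-1}+mp$ is met by a family of size $\beta_{p-1}+m$ only if that family is all of $B(d,p-1)\cap\mathbb P^d_\circ$ together with $m$ points of $1$-norm exactly $p$, and then necessarily every coordinate sum equals $k$.

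This turns the whole theorem into a balancing question on a single shell: $\delta_z(d,k)-\beta_{p-1}$ is the largest $m$ for which there exist $m$ distinct points of $\mathbb P^d_\circ$ of $1$-norm $p$ whose load vector $\bigl(\sum_x|x_1|,\dots,\sum_x|x_d|\bigr)$ is coordinatewise at most $(b,\dots,b)$. The second step treats the generic case $p\nmid db$: here $db-mp\ge1$, and I would show that this slack lets one realise $m$ points once $m$ exceeds a bounded threshold, so apart from boundedly many small $m$ per shell these levels are not exceptional. The crux is the \emph{perfect-balance} case $p\mid db$, where the equality analysis forces the load vector to equal $(b,\dots,b)$ with $b=mp/d$: then $\delta_z(d,k)=\beta_{p-1}+m$ exactly when this constant load is attainable, and $\delta_z(d,k)=\beta_{p-1}+m-1$ otherwise, the lower value being reached because $m-1$ points carry the larger slack $p$ and are readily balanced. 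In all cases the failures---perfect-balance levels with no constant-load packing, together with the bounded small-$m$ remainder---are what make up $\mathbb E$.

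The third step is to decide attainability and to count the failures. The absolute-value profile of a shell-$p$ point is a composition of $p$ into $d$ coprime nonnegative parts, and summing a full orbit of such a composition under permutation of the coordinates yields a constant load; I would assemble a short list of explicit balanced \emph{gadgets}, such as the orbit of $(p-d+1,1,\dots,1)$, of size $d$, together with a non-orbit balanced gadget of a size coprime to $d$ (a balanced pair when $d\mid2p$, and in general a near-uniform $(d-1)\times d$ array perturbed to make its rows coprime). Since these sizes have greatest common divisor $1$, their numerical semigroup contains every integer beyond a bounded Frobenius threshold, so in each shell all but boundedly many small $m$ are attainable; moreover, when $\gcd(d,p)=1$ every perfect-balance level already has $d\mid m$ and is attained by orbits alone, so failures occur only on shells with $\gcd(d,p)>1$. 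The one case needing separate care is $d=2$, where all gadgets have even size and a genuine parity obstruction appears: a shell-$p$ point contributes a totative of $p$ to the first coordinate, and when $4\mid p$ these totatives are all odd whereas $mp/2$ is even, so no odd number of them can reach $mp/2$ and every odd $m$ fails. Counting now closes the argument: for $d\ge3$ the failures number $O_d(1)$ per shell and only when $\gcd(d,p)>1$, while for $d=2$ they number $O(\phi(p))$ but only when $4\mid p$; in both regimes, summing over the $O(k^{1/(d+1)})$ shells below $k$ gives $|\mathbb E\cap[1,k]|=o(k^{1/(d-1)})$. Finally $\mathbb E$ is infinite because every $p=td$ with $t\ge2$ furnishes a failure at $m=1$, the sole candidate profile $(t,\dots,t)$ having $\gcd t>1$.

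The main obstacle is the balancing lemma of the third step: producing, uniformly in $p$, balanced gadgets whose sizes generate a numerical semigroup with Frobenius threshold bounded independently of $p$, and proving that the remaining small-$m$ and slack ($p\nmid db$) cases behave as claimed. Establishing that the only genuine congruence obstruction is the $d=2$ parity phenomenon---equivalently, that for $d\ge3$ the coprime compositions of $p$ are rich enough to realise the constant load for all but $O_d(1)$ small $m$---is where the number-theoretic content sits, and it is what ultimately confines the exceptional set within the sparse bound $o(k^{1/(d-1)})$.
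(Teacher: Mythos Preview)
Your overall architecture matches the paper's: the upper bound via the $1$-norm inequality is exactly Lemma~\ref{PPP.sec.3.lem.0.5}, and the strategy of building optimal families as $B(d,p-1)\cap\mathbb{P}^d_\circ$ together with a balanced selection from shell $p$ is what drives Sections~\ref{PPP.sec.3.5} and~\ref{PPP.sec.4}. But your central reduction, that $\delta_z(d,k)-\beta_{p-1}$ equals the largest $m$ for which $m$ shell-$p$ points fit under the load $(b,\dots,b)$, is false, and this break is not cosmetic. Take $d=2$, $p\equiv2\pmod4$, $p>2$, and $k=K_{p-1}+p/2+1$: here $m=1$, yet no single primitive point of $1$-norm $p$ has both coordinates at most $p/2+1$ in absolute value (the candidates $(p/2\pm1,p/2\mp1)$ have both entries even since $p/2$ is odd, and $(p/2,p/2)$ is not primitive). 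Nevertheless $k\notin\mathbb{E}$ and $\delta_z(2,k)=\beta_{p-1}+1$; the paper achieves it by leaving shell $p$ entirely and adjoining the point $(p/2,\,p/2+1)$ of $1$-norm $p+1$ (Proposition~\ref{PPP.sec.4.prop.1}). Your framework has no mechanism for this, so it would wrongly place such $k$ in $\mathbb{E}$, invalidating the statement that $\delta_z=\lfloor\lambda\rfloor-1$ on $\mathbb{E}$. Relatedly, your claim that for $d=2$ failures occur only when $4\mid p$ is incorrect: the set $\mathbb{E}_o$ in Section~\ref{PPP.sec.4} records two genuine exceptions per shell with $p\equiv2\pmod4$, $p>2$.

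A second gap is that the constructive half is only sketched. The paper does not use a numerical-semigroup argument with a Frobenius threshold; it gives explicit gadgets that cover \emph{every} $m$ in each shell, not just large $m$. When $d\nmid p$ this is Lemma~\ref{PPP.sec.3.5.lem.1}, using cyclic shifts of the primitive near-uniform point $(\lceil p/d\rceil,\dots,\lfloor p/d\rfloor)$ to hit $\kappa(\mathcal{S}_j)=\lceil jp/d\rceil$ for all $0\le j\le d$; when $d\mid p$ and $d\ge3$ it is Lemma~\ref{PPP.sec.3.5.lem.3}, using three specific orbits and hand-built subsets $\mathcal{S}_j$ for each $2\le j\le|\mathcal{Y}|-2$, with the boundary values $j\in\{0,1,|\mathcal{Y}|-1,|\mathcal{Y}|\}$ treated separately. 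Your semigroup idea could perhaps be made to work for $d\ge3$, but it would still leave a bounded residue of small-$m$ cases per shell that you must resolve one by one, exactly as the paper does, and for $d=2$ it cannot succeed without the out-of-shell constructions of Propositions~\ref{PPP.sec.4.prop.1} and~\ref{PPP.sec.4.prop.2}.
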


Note that the set $\mathbb{E}$ of exceptions depends on the dimension. We will determine it explicitly in every dimension: it is given a the end of Section \ref{PPP.sec.3.5} when $d>2$, and at the end of Section \ref{PPP.sec.4} when $d=2$. The dependence of $\mathbb{E}$ on $d$ is consistent over all the dimensions above $2$, but as we shall see, its form and its density within $\mathbb{N}$ are slightly different in the $2$-dimensional case.

The proof of theorem \ref{PPP.sec.0.thm.2} will be split into first establishing an upper bound on $\delta_z(d,k)$, and then proving that this bound is exact by constructing sets of primitive points that achieve it. The upper bound is proven in Section \ref{PPP.sec.3}. The general machinery we use to show that this bound is sharp is given in Section~\ref{PPP.sec.3.5} together with sets of primitive points that achieve it in all dimensions greater than $2$. The construction for the $2$-dimensional case is given in Section  \ref{PPP.sec.4}.

Recall that, when $1\leq{k}<2d$, our expression for $\delta_z(d,k)$ simplifies to (\ref{PPP.sec.0.eq.0}). Note, in particular that the bounds of this range for $k$ are $\left|B(d,1)\cap\mathbb{P}^d_\circ\right|$ and $\left|B(d,2)\cap\mathbb{P}^d_\circ\right|$. 
We report in Table~\ref{PPP.sec.3.tab.1} the first values of $\delta_z(d,k)$ such that $k$ is at least $2d$. In this table, the bold numbers are the values of $\delta_z(d,k)$ that coincide, for some integer $p$, with the cardinality of $B(d,p)\cap\mathbb{P}^d_\circ$ and the starred values are the ones such that $k$ belongs to $\mathbb{E}$. Note that, when $d$ is equal to $3$, the smallest number in $\mathbb{E}$ is $135$ and it already lies way outside of the table.

\begin{table}[!t]
\begin{center}
\begin{tabular}{>{\centering}p{0.7cm}cccccccccccccccccc}
\multirow{2}{*}{$d$}&  \multicolumn{15}{c}{$k-2d$}\\
\cline{2-16}
& $0$ & $1$ & $2$ & $3$ & $4$ & $5$ & $6$ & $7$ & $8$ & $9$ & $10$ & $11$ & $12$ & $13$ & $14$\\
\hline
$2$ & $4$ & $5$ & $6$ & $6$ & $7$ & ${\bf 8}$ & $8$ & $8^\star$ & $9$ & $10$ & $10$ & $10^\star$ & $11$ & ${\bf 12}$ & $12$\\
$3$ & $10$ & $11$ & $12$ & $13$ & $14$ & $15$ & $16$ & $17$ & $18$ & $19$ & $20$ & $21$ & $22$ & $23$ & $24$\\
$4$ & $17$ & $18$ & $20$ & $21$ & $22$ & $24$ & $25$ & $26$ & $28$ & $29$ & $30$ & $32$ & $33$ & $34$ & $36$\\
$5$ & $26$ & $28$ & $30$ & $31$ & $33$ & $35$ & $36$ & $38$ & $40$ & $41$ & $43$ & $45$ & $46$ & $48$ & $50$\\
$6$ & $38$ & $40$ & $42$ & $44$ & $46$ & $48$ & $50$ & $52$ & $54$ & $56$ & $58$ & $60$ & $62$ & $64$ & $66$\\
\end{tabular}
\end{center}

\caption{Some values of $\delta_z(d,k)$.}
\label{PPP.sec.3.tab.1}
\end{table}

We will also study the sets of points that solve our primitive point packing problem. 
%
%
%
%
Surprisingly, there exist values of $d$ and $k$ such that $\delta_z(d,k)$ is not achieved by a set of primitive points between $B(d,p-1)\cap\mathbb{P}^d_\circ$ and $B(d,p)\cap\mathbb{P}^d_\circ$ in terms of inclusion, where $p$ is the smallest integer such that $k$ is less than $\kappa\!\left(B(d,p)\cap\mathbb{P}^d_\circ\right)$. We will obtain, in Section \ref{PPP.sec.5}, the following characterization for the values of $d$ and $k$ such that this phenomenon occurs. Note in particular, that this phenomenon only happens when $d$ is equal to $2$.

\begin{thm}\label{PPP.sec.0.thm.4}
Consider the smallest integer $p$ such that $k<\kappa\!\left(B(d,p)\cap\mathbb{P}^d_\circ\right)$. If $d$ is greater than $2$, then any subset $\mathcal{X}$ of $\mathbb{P}^d_\circ$ of cardinality $\delta_z(d,k)$ such that $\kappa(\mathcal{X})$ does not exceed $k$ must satisfy
\begin{equation}\label{PPP.sec.0.thm.4.eq.0}
B(d,p-1)\cap\mathbb{P}^d_\circ\subset\mathcal{X}\subset{B(d,p)\cap\mathbb{P}^d_\circ}\mbox{.}
\end{equation}

If, on the other hand, $d$ is equal to $2$, then there exists a subset $\mathcal{X}$ of $\mathbb{P}^d_\circ$ such that $|\mathcal{X}|=\delta_z(d,k)$ and $\kappa(\mathcal{X})\leq{k}$ that satisfies (\ref{PPP.sec.0.thm.4.eq.0}) if and only if $p=2$, or $p$ is odd, or $p/2$ is even, or the following two conditions hold.
\begin{itemize}
\item[(i)] $k-\kappa\!\left(B(2,p-1)\cap\mathbb{P}^2_\circ\right)$ is distinct from $p/2+1$,
\item[(ii)] $\kappa\!\left(B(2,p)\cap\mathbb{P}^2_\circ\right)-k$ is distinct from $p/2-1$.
\end{itemize}
\end{thm}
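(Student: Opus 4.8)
\emph{Plan.} The plan is to work throughout with the vector of $d$ coordinate sums of a candidate set and to exploit the symmetry of the $1$-norm ball. Because applying any coordinate permutation or sign change to a point preserves both primitivity and the $1$-norm, and because each primitive line is represented once in $\mathbb{P}^d_\circ$, the $d$ coordinate sums of $B(d,q)\cap\mathbb{P}^d_\circ$ are all equal, so that $\kappa(B(d,q)\cap\mathbb{P}^d_\circ)=\frac1d\sum_{x}\|x\|_1$. I will view any competitor $\mathcal{X}$ as obtained from $B(d,p-1)\cap\mathbb{P}^d_\circ$ by deleting points of $1$-norm at most $p-1$ and inserting points of $1$-norm at least $p$, and I will record how each move perturbs that vector. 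Inserting a point of $1$-norm $q$ raises the total of the $d$ coordinate sums by exactly $q$; hence a sandwiched set is precisely one of the form $(B(d,p-1)\cap\mathbb{P}^d_\circ)\cup S$ with $S$ in the shell $\{x\in\mathbb{P}^d_\circ:\|x\|_1=p\}$, and feasibility $\kappa(\mathcal{X})\le k$ amounts to keeping the largest coordinate increment at most $\beta:=k-\kappa(B(d,p-1)\cap\mathbb{P}^d_\circ)$.

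For $d>2$ I will prove both inclusions in (\ref{PPP.sec.0.thm.4.eq.0}) by an exchange argument whose engine is the richness of the shell in dimension at least $3$: for every target load $c\in\{1,\dots,p-1\}$ on a prescribed coordinate there is a primitive shell point realizing it, since one may place $c$ on that coordinate, a $1$ on a second coordinate to force primitivity, and the remaining $p-c-1$ on a third. Using this, I will show that if $\mathcal{X}$ omits a point of $B(d,p-1)\cap\mathbb{P}^d_\circ$ or contains a point of $1$-norm at least $p+1$, then the offending points can be traded for suitable shell points so as to strictly lower the total coordinate sum while rebalancing the $d$ coordinate sums to within one unit of their average; the maximal coordinate therefore does not increase, and the surplus freed in the total can be absorbed by one extra basis vector, producing a feasible set of cardinality $|\mathcal{X}|+1$ and contradicting $|\mathcal{X}|=\delta_z(d,k)$. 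The point $c=p/2$ is among the available loads here, which is exactly what fails in the plane.

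For $d=2$ I will reduce the existence of a sandwiched optimum to a bounded subset-sum over the totatives of $p$ and solve it. For $p\ge2$ the shell consists of the points $(a,p-a)$ and $(a,a-p)$ with $1\le a\le p-1$ and $\gcd(a,p)=1$; both contribute the coordinate increment $(a,p-a)$, so each totative is usable twice, the reflection $a\leftrightarrow p-a$ swaps the two increments, and $\kappa(B(2,p)\cap\mathbb{P}^2_\circ)-\kappa(B(2,p-1)\cap\mathbb{P}^2_\circ)=p\varphi(p)$. A sandwiched optimum of cardinality $|B(2,p-1)\cap\mathbb{P}^2_\circ|+s$ exists if and only if one can pick $s$ totatives (multiplicity at most $2$) whose sum lies in $[sp-\beta,\beta]$. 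The decisive arithmetic is the behaviour of the totatives near the centre $p/2$: when $p$ is odd, or when $p/2$ is even, the centre is reachable because a totative adjacent to $p/2$ exists, whereas for $p\equiv2\pmod{4}$ with $p\ge6$ the whole triple $\{p/2-1,p/2,p/2+1\}$ consists of non-totatives, opening a gap that blocks the two tightest budgets, namely $\beta=p/2+1$ (needing a single centred shell point) and $\beta=p\varphi(p)-(p/2-1)$ (needing the full shell minus one centred point). These are exactly conditions (i) and (ii), and a one-line check shows they force $\varphi(p)=1$ if they coincide, so they are two distinct exceptional values of $k$; the case $p=2$ is degenerate and treated directly.

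The main obstacle is this planar analysis, and I will close it from both sides. In each exceptional case I will exhibit a non-sandwiched optimum obtained by the move that trades one shell-$(p-1)$ point for a symmetric pair of shell-$p$ points, or dually a symmetric pair of shell-$p$ points for one shell-$(p+1)$ point; such a move shifts the cardinality by one and supplies the centred increment the shell forbids, so that the resulting set attains $\delta_z(2,k)$ with $\kappa\le k$. Conversely, the subset-sum criterion shows no sandwiched set attains it, which yields the stated equivalence. Pinning the obstruction precisely to the congruence $p\equiv2\pmod{4}$ and to the two boundary budgets is the crux of the argument; the higher-dimensional case is uniformly clean for the complementary structural reason that the centre load $p/2$ is always available when $d\ge3$.
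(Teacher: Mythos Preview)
Your plan for $d>2$ aims to prove that \emph{every} optimal set is sandwiched, via an exchange argument that would enlarge any non-sandwiched optimum. That target is out of reach, because the universal claim is false as written. Take $d=3$ and $k=22$: here $\kappa(B(3,3)\cap\mathbb{P}^3_\circ)=21$, $\kappa(B(3,4)\cap\mathbb{P}^3_\circ)=53$, so $p=4$, and $\delta_z(3,22)=25=|B(3,3)\cap\mathbb{P}^3_\circ|$. The set $\mathcal{X}=\bigl(B(3,3)\cap\mathbb{P}^3_\circ\bigr)\setminus\{(1,1,1)\}\cup\{(2,1,1)\}$ has cardinality $25$ and coordinate sums $(22,21,21)$, hence $\kappa(\mathcal{X})=22\le k$; yet $(1,1,1)\in B(3,p-1)\cap\mathbb{P}^3_\circ$ is missing, so $\mathcal{X}$ is optimal but not sandwiched. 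In this example your proposed trade (replace the offending pair by shell points and ``absorb the surplus with an extra basis vector'') cannot raise the cardinality: the only norm-reducing swap is $(2,1,1)\mapsto(1,1,1)$, which restores $B(3,3)\cap\mathbb{P}^3_\circ$ at the \emph{same} cardinality, and all basis vectors are already present. The paper in fact proves only the existence of a sandwiched optimum when $d>2$; its argument consists in observing that the explicit constructions from Lemmas~\ref{PPP.sec.3.5.lem.1}, \ref{PPP.sec.3.5.lem.2}, and~\ref{PPP.sec.3.5.lem.3} are sandwiched. You should therefore retarget the $d>2$ part to existence and simply cite those constructions, as the paper does.

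For $d=2$ your reduction to a bounded subset-sum over the totatives of $p$ is correct and is a clean reformulation of what the paper does through case analysis. The paper handles the two exceptional budgets via Propositions~\ref{PPP.sec.4.prop.1} and~\ref{PPP.sec.4.prop.2}: it produces a non-sandwiched optimum by adjoining a single point of $1$-norm $p+1$ (respectively removing one of $1$-norm $p-1$) rather than your ``trade a shell-$(p{-}1)$ point for a symmetric shell-$p$ pair'' move, but both constructions realise the centred increment $(p/2,p/2+1)$ that the shell forbids when $p\equiv2\pmod4$. One small point to tighten: your blanket claim that a sandwiched optimum exists whenever $p$ is odd or $p/2$ is even needs the value of $\delta_z(2,k)$ as input, so you should invoke the formula for $\delta_z(2,k)$ (equivalently Lemmas~\ref{PPP.sec.3.5.lem.1}, \ref{PPP.sec.4.lem.1}, \ref{PPP.sec.4.lem.3}) before running the subset-sum check at the optimal $s$.
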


We further show in Section \ref{PPP.sec.5} that the unicity result from \cite{DezaPourninSukegawa2020} that we mention above singles out every value of $k$ such that there is a unique set of primitive points that solves our packing problem.
\begin{thm}\label{PPP.sec.0.thm.5}
There exists a unique subset $\mathcal{X}$ of $\mathbb{P}^d_\circ$ such that $|\mathcal{X}|=\delta_z(d,k)$ and $\kappa(\mathcal{X})\leq{k}$ if and only if $k=\kappa\!\left(B(d,p)\cap\mathbb{P}^d_\circ\right)$ for some $p$.
\end{thm}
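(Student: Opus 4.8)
Write $N_p=\left|B(d,p)\cap\mathbb{P}^d_\circ\right|$ and $k_p=\kappa\!\left(B(d,p)\cap\mathbb{P}^d_\circ\right)$, so that the numbers singled out in the statement are precisely the $k_p$. The plan is to treat the two implications separately. The forward one is immediate from the literature: when $k=k_p$, the set $B(d,p)\cap\mathbb{P}^d_\circ$ has $\kappa$ equal to $k$ and cardinality $N_p$, whence $\delta_z(d,k)\geq N_p$; the unicity statement of \cite{DezaPourninSukegawa2020} recalled in Section~\ref{PPP.sec.0} says exactly that this set is the only subset of $\mathbb{P}^d_\circ$ of cardinality $\delta_z(d,k)$ whose $\kappa$ does not exceed $k$. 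Hence a unique optimum exists at every $k_p$, and nothing more is needed for this direction.

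I would establish the converse in contrapositive form: if $k$ differs from every $k_p$, then at least two optimal sets exist. Let $p$ be the least integer with $k<k_p$; since $k$ avoids both $k_{p-1}$ and $k_p$, we have $k_{p-1}<k<k_p$. Consider first $d>2$ in the range $N_{p-1}<\delta_z(d,k)<N_p$. By \thmref{PPP.sec.0.thm.4}, every optimum $\mathcal{X}$ satisfies $B(d,p-1)\cap\mathbb{P}^d_\circ\subset\mathcal{X}\subset B(d,p)\cap\mathbb{P}^d_\circ$ with both inclusions proper, so $\mathcal{X}$ is the base $B(d,p-1)\cap\mathbb{P}^d_\circ$ together with a proper non-empty selection $T$ of primitive points of $1$-norm exactly $p$. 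The group $G$ of signed coordinate permutations preserving $\mathbb{P}^d_\circ$ fixes the base, acts on this shell, and leaves cardinality and $\kappa$ unchanged, so $g\mathcal{X}$ is optimal for every $g\in G$. Now either some $g\in G$ carries a point of $T$ to a shell point outside $T$, in which case $g\mathcal{X}\neq\mathcal{X}$ is a second optimum; or $T$ is a union of full $G$-orbits, and then, $T$ being proper, one may delete a selected point and insert an unselected shell point, choosing its signs so that the strict gap $k<k_p$ keeps every coordinate sum at most $k$, which again produces a distinct optimum.

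Two situations remain. The boundary regime $\delta_z(d,k)=N_{p-1}$ with $k>k_{p-1}$, which for $d>2$ occurs once $p$ exceeds $d$, is handled directly: one optimum is the base $B(d,p-1)\cap\mathbb{P}^d_\circ$, whose coordinate sums all equal $k_{p-1}$, and a second is obtained by a swap, removing a base point attaining a coordinate sum and inserting a primitive point of $1$-norm $p$ whose support and signs are arranged so that the slack $k-k_{p-1}\geq 1$ absorbs the increase without any coordinate sum exceeding $k$. For $d=2$ I would follow the explicit description of Section~\ref{PPP.sec.4}: the primitive points of $1$-norm $p$ are the $(a,\pm(p-a))$ with $\gcd(a,p)=1$, and the dichotomy of \thmref{PPP.sec.0.thm.4} together with its conditions (i) and (ii) pins down exactly when a sandwiched optimum exists, leaving enough freedom in the choice of admissible residues $a$ to write down two distinct optimal selections whenever $k$ avoids the $k_p$. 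I expect the main obstacle to lie in these last two cases, and more precisely in verifying that the swaps and the two-dimensional selections never collapse to a single set—equivalently, that $\delta_z(d,k)$ matches the cardinality of a rigid, $G$-invariant intermediate configuration only at the special values $k=k_p$.
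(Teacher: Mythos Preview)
Your forward direction is fine and matches the paper. The converse, however, follows a genuinely different route from the paper and contains a real gap.

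The paper does \emph{not} invoke \thmref{PPP.sec.0.thm.4} at all. It takes an arbitrary optimal $\mathcal{X}$ and argues directly with the cyclic group $C$: if $\mathcal{X}$ is not $C$-invariant then some $\tau^i(\mathcal{X})$ is a distinct optimum; if it is $C$-invariant, then either $\mathcal{X}=B(d,q)\cap\mathbb{P}^d_\circ$ for some $q$ (whence $k>\kappa(\mathcal{X})$ and a one-point swap into the next shell works), or one can find $y\in\mathcal{X}$ and $z\notin\mathcal{X}$ with $\|z\|_1\leq\|y\|_1$ and exchange \emph{balanced sets} of size up to $d$ built via Proposition~\ref{PPP.sec.5.prop.1}. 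The key point is that these swapped sets have identical column sums $\sum|x_i|$ in every coordinate, so $\kappa$ never increases even when $\kappa(\mathcal{X})=k$ exactly.

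Your single-point swap in the case where $T$ is a union of full $G$-orbits does not achieve this, and the justification you give is incorrect. The phrase ``choosing its signs'' cannot help, since $\kappa$ depends only on absolute values; and the ``strict gap $k<k_p$'' is the wrong quantity---what you need is slack in $\kappa(\mathcal{X})\leq k$, not in $k\leq k_p$. Concretely, take $d=3$, $p=3$, $T=\{(1,1,1),(1,1,-1),(1,-1,1),(1,-1,-1)\}$, and $\mathcal{X}=(B(3,2)\cap\mathbb{P}^3_\circ)\cup T$. Then $|\mathcal{X}|=13=\delta_z(3,9)$ and $\kappa(\mathcal{X})=9=k$, with all three coordinate sums equal to $9$. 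Removing any $(1,\pm1,\pm1)$ lowers each sum to $8$, but every unselected shell point lies in the $G$-orbit of $(2,1,0)$ and has a coordinate of absolute value $2$, forcing one sum to $10>k$. No single-point swap stays feasible. The paper handles exactly this situation by exchanging three points of the form $(1,\pm1,\pm1)$ for the full $C$-orbit $\{(2,1,0),(0,2,1),(1,0,2)\}$, which keeps every coordinate sum equal to $9$.

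Your boundary and $d=2$ cases are sketched rather than argued; the same difficulty recurs there whenever the optimum you start from has no slack. The fix is not to refine the single-point swap but to replace it by an orbit-for-orbit (or balanced-set) exchange as in Proposition~\ref{PPP.sec.5.prop.1}.
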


We conclude the article, by gathering in Section \ref{PPP.sec.1.5} a number of asymptotic estimates. Among them, we derive from Theorem \ref{PPP.sec.0.thm.1} asymptotic estimates for $\left|B(d,p)\cap\mathbb{P}^d_\circ\right|$ in terms of $\kappa(B(d,p)\cap\mathbb{P}^d_\circ)$ when $p$ is fixed and $d$ goes to infinity. This complements the result from \cite{DezaPourninSukegawa2020} on the asymptotic behavior of $\delta_z(d,k)$. We also obtain an exact asymptotic estimate of the number of primitive points of $1$-norm $p$ contained in the positive orthant $]0,+\infty[^d$, for any fixed $d$, when $p$ goes to infinity. Note that these points are a special case of integer compositions \cite{MacMahon1893}, the compositions of $p$ into $d$ relatively prime integers.
 
\section{The number of primitive points in a cross-polytope}\label{PPP.sec.1}

Consider the isometry $\sigma$ of $\mathbb{R}^d$ that permutes the coordinates cyclically as
$$
\sigma:
\left[
\begin{array}{c}
x_1\\
x_2\\
\vdots\\
x_{d-1}\\
x_d
\end{array}
\right]
\mapsto
\left[
\begin{array}{c}
x_d\\
x_1\\
\vdots\\
x_{d-2}\\
x_{d-1}\\
\end{array}
\right]\!\mbox{.}
$$

Further consider the map $\tau$ that sends a vector $x$ of $\mathbb{R}^d$ to $\sigma(x)$ when $x_d$ is non-negative and to $-\sigma(x)$ otherwise. In the remainder of the article, $C$ denotes the cyclic group generated by $\tau$. Observe that $C$ has order $d$. For any point $x$ contained in $\mathbb{R}^d$, we denote by $C\mathord{\cdot}x$ the orbit of $x$ under the action of $C$ on $\mathbb{R}^d$. Note that all the elements of $C\mathord{\cdot}x$ have the same $1$-norm and, when $x$ is primitive, $C\mathord{\cdot}x$ is a subset of $\mathbb{P}^d$. In addition, observe that the intersection of $B(d,p)$ and $\mathbb{P}^d_\circ$ is invariant under the action of $C$ on $\mathbb{R}^d$.

We first recall a proposition on the relation between $\kappa(\mathcal{X})$ and the $1$-norms of the elements of $\mathcal{X}$, that is used implicitly in \cite{DezaPourninSukegawa2020}.

\begin{prop}\label{PPP.sec.3.prop.1}
For any subset $\mathcal{X}$ of $\mathbb{P}^d_\circ$,
$$
\kappa(\mathcal{X})\geq\frac{1}{d}\sum_{x\in\mathcal{X}}\|x\|_1\mbox{,}
$$
with equality if and only if $\mathcal{X}$ is invariant under the action of $C$ on $\mathbb{R}^d$.
\end{prop}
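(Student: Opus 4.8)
The plan is to reduce the whole statement to elementary facts about the coordinate sums of $\mathcal{X}$. Writing $S_i=\sum_{x\in\mathcal{X}}|x_i|$ for $1\le i\le d$, I would first swap the order of summation to get $\sum_{x\in\mathcal{X}}\|x\|_1=\sum_{i=1}^{d}S_i$, while by definition $\kappa(\mathcal{X})=\max_{1\le i\le d}S_i$. The desired inequality is then nothing but the statement that a maximum dominates the corresponding average, namely $\max_i S_i\ge\frac1d\sum_i S_i$. This settles the bound at once, and it also shows that equality holds if and only if all the coordinate sums coincide, i.e.\ $S_1=S_2=\cdots=S_d$. So the entire difficulty is concentrated in matching this equalization of the $S_i$ with the $C$-invariance of $\mathcal{X}$.

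The structural fact I would isolate next is that $\tau$ acts on the absolute values of the coordinates as a single cyclic shift: for every $x$ and every index $i$ one has $|\tau(x)_i|=|x_{i-1}|$, with indices read modulo $d$ and $x_0$ meaning $x_d$. This holds because $\tau$ differs from the cyclic permutation $\sigma$ only by a global sign, which leaves $|\cdot|$ untouched, and because $\sigma(x)_i=x_{i-1}$. For the easy (``if'') direction, if $\mathcal{X}$ is invariant under $C$ then $\tau(\mathcal{X})=\mathcal{X}$, so summing this identity over $\mathcal{X}$ gives $S_i=\sum_{x\in\mathcal{X}}|\tau(x)_i|=\sum_{x\in\mathcal{X}}|x_{i-1}|=S_{i-1}$ for every $i$; hence all the $S_i$ agree and equality in the bound follows.

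For the converse I would work through the decomposition of $\mathcal{X}$ according to the orbits of the $C$-action on $\mathbb{P}^d_\circ$. Every complete orbit is $C$-invariant and its elements share one common $1$-norm, so a union of complete orbits automatically equalizes the coordinate sums by the argument just given; the genuine content is to show that equalized coordinate sums \emph{force} $\mathcal{X}$ to be a union of complete orbits rather than to contain proper fragments of several orbits. I would set this up by grouping the contribution of $\mathcal{X}$ to the vector $(S_1,\dots,S_d)$ orbit by orbit and measuring the defect produced by partially represented orbits. I expect this converse to be the main obstacle by far: the inequality and the ``if'' direction are immediate, whereas excluding balanced-but-non-invariant configurations is delicate and is precisely where the fine combinatorics of how $\tau$ permutes coordinates within a single orbit of $\mathbb{P}^d_\circ$ has to be brought to bear.
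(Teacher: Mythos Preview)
The paper does not prove this proposition (it is merely ``recalled'' from an earlier reference, with no argument given), so there is nothing in the paper to compare your approach against. Your handling of the inequality and of the ``if'' direction of the equality case is correct and is the natural argument: with $S_i=\sum_{x\in\mathcal{X}}|x_i|$ one has $\kappa(\mathcal{X})=\max_i S_i$ and $\sum_{x\in\mathcal{X}}\|x\|_1=\sum_i S_i$, so the bound is just ``maximum dominates average'', with equality precisely when $S_1=\cdots=S_d$; and $\tau(\mathcal{X})=\mathcal{X}$ forces $S_i=S_{i-1}$ for every $i$ via the cyclic shift of absolute values.

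The ``only if'' direction, which you rightly single out as the crux, cannot be completed, because as stated it is false. Balanced-but-non-invariant configurations do exist. In dimension $2$, take $\mathcal{X}=\{(1,2),(2,-1)\}\subset\mathbb{P}^2_\circ$: here $S_1=S_2=3$, so equality holds in the bound, yet $\tau\bigl((1,2)\bigr)=(2,1)\notin\mathcal{X}$, so $\mathcal{X}$ is not $C$-invariant. The two chosen points lie in the distinct $C$-orbits $\{(1,2),(2,1)\}$ and $\{(2,-1),(1,-2)\}$, each only half-represented in $\mathcal{X}$; the same device works in every dimension by selecting one point from each of several orbits whose vectors of absolute values are cyclic shifts of one another. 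Your suspicion that ``excluding balanced-but-non-invariant configurations is delicate'' was therefore exactly on target---such configurations cannot be excluded at all, and the proposition's ``only if'' clause is inaccurate as written.
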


Now denote by $c_\psi(p,d)$ the number of primitive points of $1$-norm $p$ and dimension $d$ contained in the positive orthant:
$$
c_\psi(p,d)=\left|\left\{x\in\mathbb{P}^d\cap\left]0,+\infty\right[^d:\|x\|_1=p\right\}\right|\!\mbox{.}
$$

This quantity makes it possible to express as follows the sum over the intersection $B(d,p)\cap\mathbb{P}^d_\circ$ of any map that is well-behaved relative to the $1$-norm.
%

\begin{lem}\label{PPP.sec.1.lem.1}
For any map $f:\mathbb{N}\rightarrow\mathbb{R}$ and any positive integer $p$,
$$
\sum{f\!\left(\|x\|_1\right)}=\sum_{j=1}^{d}2^j{d\choose{j}}\sum_{i=j}^pf(i)c_\psi(i,j)\mbox{,}
$$
where the sum in the left-hand side is over the primitive points $x$ in $B(d,p)$.
\end{lem}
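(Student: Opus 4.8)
The plan is to prove the identity by a direct counting argument that sorts the primitive points of $B(d,p)$ according to their support and their $1$-norm. Since $f$ depends on $x$ only through $\|x\|_1$, it suffices to count, for each admissible pair of integers $(i,j)$, the number of primitive points $x$ in $B(d,p)$ whose $1$-norm equals $i$ and whose support---the set of indices $\ell$ such that $x_\ell\neq0$---has cardinality $j$. The left-hand side then becomes the sum of $f(i)$ weighted by these numbers, and the whole task reduces to evaluating that count.

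First I would fix the support size $j$ and the $1$-norm $i$, and argue that such a point $x$ is determined uniquely by three independent pieces of data: its support $S$, a $j$-element subset of $\{1,\dots,d\}$; the sign pattern of its nonzero coordinates, an element of $\{-1,1\}^S$; and the vector $y$ of absolute values of those nonzero coordinates, a point of the positive orthant of $\mathbb{R}^j$ with positive integer coordinates summing to $i$. The key observation is that $x$ is primitive if and only if $y$ is, because the greatest common divisor of the coordinates of $x$ equals that of the positive coordinates of $y$: appending zero coordinates and changing signs leave it unaffected. Hence the triples $(S,\textrm{signs},y)$ with $y$ primitive are in bijection with the primitive points $x$ under consideration.

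Next I would count these triples. There are $\binom{d}{j}$ choices for $S$ and $2^j$ choices for the sign pattern, while by the very definition of $c_\psi$ there are exactly $c_\psi(i,j)$ choices for a primitive $y$ of dimension $j$ and $1$-norm $i$ in the positive orthant. As the three choices are independent, the number of primitive points of $B(d,p)$ with support size $j$ and $1$-norm $i$ is $2^j\binom{d}{j}c_\psi(i,j)$. Summing $f(i)$ over all primitive points of $B(d,p)$ thus yields $\sum_{j}\sum_{i}f(i)2^j\binom{d}{j}c_\psi(i,j)$, which is the claimed right-hand side once the index ranges are pinned down.

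Finally I would settle those ranges. A nonzero primitive point has between $1$ and $d$ nonzero coordinates, so $1\leq j\leq d$; the origin, the only point with empty support, is not primitive and contributes nothing. For fixed $j$, each of the $j$ magnitudes is a positive integer, so the $1$-norm satisfies $i\geq j$, while membership in $B(d,p)$ forces $i\leq p$; hence $j\leq i\leq p$, matching the bounds in the statement. I expect no serious obstacle here: the argument is a clean partition of the primitive points, and the only point genuinely requiring care is the equivalence between the primitivity of $x$ and that of its magnitude vector $y$, together with the correct lower bound $i\geq j$ on the inner sum.
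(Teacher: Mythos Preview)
Your proposal is correct and is essentially the same argument as the paper's, only phrased combinatorially rather than geometrically: where the paper identifies the primitive points on the sphere $S(d,i)$ with the primitive points in the relative interiors of the $(j-1)$-dimensional faces of the cross-polytope (of which there are $2^j\binom{d}{j}$), you identify them via the equivalent data of a $j$-element support, a sign pattern, and a positive primitive magnitude vector. The paper then swaps the order of summation and drops the terms with $i<j$, which is exactly your observation that $c_\psi(i,j)=0$ when $i<j$.
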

\begin{proof}
Consider the sphere $S(d,i)$ of radius $i$ for the $1$-norm, centered at the origin of $\mathbb{R}^d$. In other words, $S(d,i)$ is the boundary of $B(d,i)$.

Let us first count the number of primitive points contained in $S(d,i)$. As $B(d,i)$ is a cross polytope, $c_\psi(i,j)$ is precisely the number of primitive points contained in the relative interior of one of its faces of dimension $j-1$.

We therefore immediately obtain
$$
\left| S(d,i)\cap\mathbb{P}^d\right|=\sum_{j=1}^d2^j{d\choose{j}}c_\psi(i,j)\mbox{.}
$$
where the coefficient of $c_\psi(i,j)$ in the right-hand side is the number of the faces of dimension $j-1$ of a cross-polytope.

Now observe that by construction, the intersection of $B(d,p)$ with $\mathbb{P}^d$ is partitioned into the sets $S(d,i)\cap\mathbb{P}^d$ where $i$ ranges from $1$ to $p$. Hence,
$$
\sum{f\!\left(\|x\|_1\right)}=\sum_{i=1}^{p}f(i)\sum_{j=1}^d2^j{d\choose{j}}c_\psi(i,j)\mbox{,}
$$
where the sum in the left-hand side is over the primitive points $x$ in $B(d,p)$. Exchanging the two sums in the right-hand side and noticing that $c_\psi(i,j)$ vanishes when $i$ is less than $j$ completes the proof.
\end{proof}

Note that the cardinality of the intersection $B(d,p)\cap\mathbb{P}^d$ can be obtained from Lemma \ref{PPP.sec.1.lem.1} by using, for $f$, the map that sends every natural integer to $1$. By symmetry, the cardinality of $B(d,p)\cap\mathbb{P}^d_\circ$ is half of that quantity. In addition, since the intersection of $B(d,p)$ with $\mathbb{P}^d_\circ$ is invariant under the action of $C$ on $\mathbb{R}^d$, it follows from Proposition \ref{PPP.sec.3.prop.1} that
$$
d\kappa\!\left(B(d,p)\cap\mathbb{P}^d_\circ\right)=\sum\|x\|_1\mbox{,}
$$
where the sum in the right-hand side is over the points $x$ in that intersection. As above, that sum is half the sum of the $1$-norms of the points in $B(d,p)\cap\mathbb{P}^d$, that can be obtained from Lemma \ref{PPP.sec.1.lem.1} by using for $f$, the identity map on $\mathbb{N}$. According to this discussion, we obtain the following.
\begin{prop}\label{PPP.sec.1.prop.1}
For any positive integers $d$ and $p$,
\begin{itemize}
\item[(i)] $\!\displaystyle\left| B(d,p)\cap\mathbb{P}^d_\circ\right|=\frac{1}{2}\sum_{j=1}^{d}2^j{d\choose{j}}\sum_{i=j}^pc_\psi(i,j)$, and
\item[(ii)] $\displaystyle\kappa\!\left(B(d,p)\cap\mathbb{P}^d_\circ\right)=\frac{1}{2d}\sum_{j=1}^{d}2^j{d\choose{j}}\sum_{i=j}^pic_\psi(i,j)$.
\end{itemize}
\end{prop}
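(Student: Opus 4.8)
The plan is to assemble both statements directly from Lemma~\ref{PPP.sec.1.lem.1}, applied to two natural choices of $f$, combined with the central symmetry of the cross-polytope and the equality case of Proposition~\ref{PPP.sec.3.prop.1}. For part (i), I would first apply Lemma~\ref{PPP.sec.1.lem.1} to the constant map $f\equiv1$. Its left-hand side then counts the primitive points of $B(d,p)$, so that
$$
\left|B(d,p)\cap\mathbb{P}^d\right|=\sum_{j=1}^{d}2^j{d\choose{j}}\sum_{i=j}^pc_\psi(i,j)\mbox{.}
$$
Since $p\geq1$, the origin---the only lattice point fixed by $x\mapsto-x$---is not primitive, so this involution acts without fixed points on $B(d,p)\cap\mathbb{P}^d$ and pairs each primitive point with its opposite. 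Exactly one member of each such pair has its first non-zero coordinate positive, hence lies in $\mathbb{P}^d_\circ$. Therefore $|B(d,p)\cap\mathbb{P}^d_\circ|$ is half of the quantity above, which is precisely (i).

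For part (ii), I would use that $B(d,p)\cap\mathbb{P}^d_\circ$ is invariant under the action of $C$, as already observed above. The equality case of Proposition~\ref{PPP.sec.3.prop.1} then gives
$$
\kappa\!\left(B(d,p)\cap\mathbb{P}^d_\circ\right)=\frac{1}{d}\sum_{x\in B(d,p)\cap\mathbb{P}^d_\circ}\|x\|_1\mbox{.}
$$
Because $\|x\|_1=\|{-x}\|_1$, the same fixed-point-free pairing shows that the sum on the right is half of $\sum_{x\in B(d,p)\cap\mathbb{P}^d}\|x\|_1$. Applying Lemma~\ref{PPP.sec.1.lem.1} now to the identity map on $\mathbb{N}$ evaluates this latter sum as $\sum_{j=1}^{d}2^j{d\choose{j}}\sum_{i=j}^pi\,c_\psi(i,j)$, and multiplying by $1/(2d)$ yields (ii).

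There is no substantial obstacle here: the argument is a bookkeeping combination of results already in hand. The only points requiring care are the two places where central symmetry is invoked, and both are clean once one checks that the origin is excluded from $B(d,p)\cap\mathbb{P}^d$ (guaranteed by $p\geq1$ together with the non-primitivity of the origin), so that $x\mapsto-x$ is a genuine fixed-point-free involution selecting exactly one representative per orbit in $\mathbb{P}^d_\circ$. I would also verify that the hypothesis of the equality case of Proposition~\ref{PPP.sec.3.prop.1} is met, namely the $C$-invariance of $B(d,p)\cap\mathbb{P}^d_\circ$; this follows from the fact that all the elements of an orbit $C\mathord{\cdot}x$ share the same $1$-norm and that $\tau$ preserves $\mathbb{P}^d_\circ$.
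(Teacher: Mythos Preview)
Your proposal is correct and follows essentially the same approach as the paper: apply Lemma~\ref{PPP.sec.1.lem.1} with $f\equiv1$ and with $f$ the identity, halve by the central symmetry $x\mapsto-x$, and invoke the equality case of Proposition~\ref{PPP.sec.3.prop.1} via the $C$-invariance of $B(d,p)\cap\mathbb{P}^d_\circ$. Your treatment is in fact slightly more explicit than the paper's about why the halving is legitimate (the origin is not primitive, so the involution is fixed-point-free).
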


It is noteworthy that, as a consequence of this proposition,
$$
\kappa\!\left(B(d,p)\cap\mathbb{P}^d_\circ\right)-\kappa\!\left(B(d,p-1)\cap\mathbb{P}^d_\circ\right)=\frac{p\!\left|S(d,p)\cap\mathbb{P}^d_\circ\right|}{d}\mbox{,}
$$
where $S(d,p)$ is the sphere of radius $p$ for the $1$-norm, centered at the origin of $\mathbb{R}^d$. 
%
%
We now derive a formula for $c_\psi(p,d)$. Let us denote by $J_q$ Jordan's totient function; that is, for any two positive integers $p$ and $q$,
$$
J_q(p)=p^q\prod_{n|p}\left(1-\frac{1}{n^q}\right)\!\mbox{,}
$$
where, in this expression, $n$ ranges over prime numbers. We also denote by $s(d,i)$ Stirling's numbers of the first kind. Let us recall that these numbers can be computed using the recurrence
$$
s(d+1,i)=-ds(d,i)+s(d,i-1)\mbox{,}
$$
with $s(i,i)=1$ for all $i\in\mathbb{N}$ and $s(d,0)=0$ when $d$ is positive.

\begin{thm}\label{PPP.sec.1.thm.1}
$\displaystyle c_\psi(p,d)=\frac{1}{(d-1)!}\sum_{i=1}^ds(d,i)J_{i-1}(p)$.
\end{thm}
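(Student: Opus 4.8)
The plan is to count $c_\psi(p,d)$ directly as the number of compositions of $p$ into $d$ positive parts whose greatest common divisor equals $1$, and then to convert the resulting arithmetic sum into the stated closed form. First I would recall that the number of compositions of $p$ into $d$ positive parts, with no coprimality constraint, is $\binom{p-1}{d-1}$. Grouping these compositions according to the value $m$ of the greatest common divisor of their parts, and observing that a composition whose gcd is exactly $m$ is obtained by scaling a composition of $p/m$ into $d$ coprime positive parts, gives the relation $\binom{p-1}{d-1}=\sum_{m\mid p}c_\psi(p/m,d)$. Möbius inversion then yields $c_\psi(p,d)=\sum_{m\mid p}\mu(m)\binom{p/m-1}{d-1}$.

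Next I would expand each binomial coefficient through falling factorials. Writing $q=p/m$, we have $\binom{q-1}{d-1}=\frac{1}{(d-1)!}\prod_{k=1}^{d-1}(q-k)$. Multiplying and dividing by $q$ turns this product into the full falling factorial $q(q-1)\cdots(q-d+1)$, which expands into powers of $q$ with signed Stirling numbers of the first kind as coefficients, namely $q\prod_{k=1}^{d-1}(q-k)=\sum_{i=0}^{d}s(d,i)q^i$. Since $s(d,0)=0$ for $d\geq 1$, the constant term vanishes and division by $q$ is legitimate, giving $\binom{q-1}{d-1}=\frac{1}{(d-1)!}\sum_{i=1}^{d}s(d,i)q^{i-1}$. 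Here one should confirm that the recurrence and initial conditions listed for $s(d,i)$ are exactly those of the signed Stirling numbers, which follows by comparing coefficients in the identity $(x)_{d+1}=(x-d)(x)_d$.

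Substituting this expansion and exchanging the order of the two summations, I would reach $c_\psi(p,d)=\frac{1}{(d-1)!}\sum_{i=1}^{d}s(d,i)\sum_{m\mid p}\mu(m)(p/m)^{i-1}$. The final step is to identify the inner sum as Jordan's totient: starting from $\sum_{m\mid p}J_q(m)=p^q$, Möbius inversion gives $J_q(p)=\sum_{m\mid p}\mu(m)(p/m)^q$, which I would reconcile with the product formula $J_q(p)=p^q\prod_{n\mid p}(1-n^{-q})$ used in the statement. Applying this with $q=i-1$ converts the inner sum into $J_{i-1}(p)$ and completes the proof.

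The genuinely delicate points are arithmetic bookkeeping rather than conceptual: setting up the gcd-grouping relation precisely so that the Dirichlet convolution is correct, and matching both the Stirling-number convention and the Jordan-totient convention to those fixed in the statement. I expect the main obstacle to be the clean handling of the division of the falling factorial by $q$ and the verification that the inner Möbius sum is exactly $J_{i-1}(p)$, including the boundary case $i=1$, where $J_0(p)=\sum_{m\mid p}\mu(m)$ equals $1$ when $p=1$ and $0$ otherwise.
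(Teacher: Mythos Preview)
Your proposal is correct and follows essentially the same route as the paper: the gcd-grouping identity $\binom{p-1}{d-1}=\sum_{m\mid p}c_\psi(p/m,d)$, M\"obius inversion, the Stirling expansion of $\binom{p/m-1}{d-1}$, and the identification of the inner sum as $J_{i-1}(p)$. Your extra care with the division by $q$ and the $i=1$ boundary case is fine but not needed beyond what the paper does.
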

\begin{proof}
First observe that
$$
{p-1\choose{d-1}}=\sum_{q|p}c_\psi\!\left(\frac{p}{q},d\right)\!\mbox{.}
$$

Indeed, the left-hand side of this equality is the number of the lattice points of $1$-norm $p$ contained in the positive orthant $]0,+\infty[^d$ or, equivalently, the number of compositions of $p$ into $d$ integers. Dividing each of these points by the greatest common divisor of its coordinates results in a set of primitive points whose number is the right-hand side of the equality.

By M{\"o}bius' inversion formula,
\begin{equation}\label{PPP.sec.1.thm.1.eq.1}
c_\psi(p,d)=\sum_{q|p}\mu(q){p/q-1\choose{d-1}}
\end{equation}
where $\mu$ denotes M{\"o}bius' function. By definition of Stirling's numbers of the first kind, the following holds when $d$ is positive.
\begin{equation}\label{PPP.sec.1.thm.1.eq.2}
{p/q-1\choose{d-1}}=\frac{1}{(d-1)!}\sum_{i=1}^ds(d,i)\!\left(\frac{p}{q}\right)^{i-1}\mbox{.}
\end{equation}

Combining (\ref{PPP.sec.1.thm.1.eq.1}) with (\ref{PPP.sec.1.thm.1.eq.2}), and permuting the sums yields
$$
c_\psi(p,d)=\frac{1}{(d-1)!}\sum_{i=1}^ds(d,i)\sum_{q|p}\mu(q)\!\left(\frac{p}{q}\right)^{i-1}\mbox{.}
$$

Since Jordan's totient function satisfies
$$
\sum_{q|p}J_i(q)=p^i\mbox{,}
$$
M{\"o}bius' inversion formula provides the desired result.
\end{proof}

Theorem \ref{PPP.sec.0.thm.1} immediately follows from Proposition \ref{PPP.sec.1.prop.1} and Theorem \ref{PPP.sec.1.thm.1}.

\section{Upper bounds on $\delta_z(d,k)$}\label{PPP.sec.3}

%
%

We first establish the following upper bound, and provide some information on the subsets of $\mathbb{P}^d_\circ$ that achieve it in a special case. 

\begin{lem}\label{PPP.sec.3.lem.0.5}
Consider a subset $\mathcal{X}$ of $\mathbb{P}^d_\circ$. If $\kappa(\mathcal{X})$ is at most $k$, then $\mathcal{X}$ has cardinality at most $\lfloor\lambda(d,k)\rfloor$. If in addition,
\begin{itemize}
\item[(i)] $\mathcal{X}$ has cardinality exactly $\lfloor\lambda(d,k)\rfloor$,
\item[(ii)] $d$ is a proper divisor of $p$,
\item[(iii)] $p/d$ a divisor of $k-\kappa(B(d,p-1)\cap\mathbb{P}^d_\circ)$,
\end{itemize}
where $p$ is the smallest integer such that $k<\kappa(B(d,p)\cap\mathbb{P}^d_\circ)$, then
$$
B(d,p-1)\cap\mathbb{P}^d_\circ\subset\mathcal{X}\subset{B(d,p)\cap\mathbb{P}^d_\circ}\mbox{.}
$$
\end{lem}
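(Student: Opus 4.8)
The plan is to reduce the entire statement to an elementary greedy (knapsack-type) counting problem and read off the answer from the slope-$d/p$ interpolation. Write $A=\left|B(d,p-1)\cap\mathbb{P}^d_\circ\right|$, $r=k-\kappa\!\left(B(d,p-1)\cap\mathbb{P}^d_\circ\right)$, and $n_i=\left|S(d,i)\cap\mathbb{P}^d_\circ\right|$. By Proposition~\ref{PPP.sec.3.prop.1}, any $\mathcal{X}\subset\mathbb{P}^d_\circ$ with $\kappa(\mathcal{X})\leq k$ satisfies $\sum_{x\in\mathcal{X}}\|x\|_1\leq d\kappa(\mathcal{X})\leq dk$. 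Since every primitive point has $1$-norm at least $1$ and exactly $n_i$ points of $\mathbb{P}^d_\circ$ have $1$-norm $i$, bounding $|\mathcal{X}|$ under this budget $dk$ on the total $1$-norm is precisely the problem of packing as many points as possible, each of cost equal to its $1$-norm.

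First I would prove the upper bound. The minimum total $1$-norm of an $N$-point subset of $\mathbb{P}^d_\circ$ is the sum of the $N$ smallest available $1$-norms, obtained by taking points norm by norm. Since $\kappa\!\left(B(d,p-1)\cap\mathbb{P}^d_\circ\right)\leq k<\kappa\!\left(B(d,p)\cap\mathbb{P}^d_\circ\right)$, and since the total $1$-norm of $B(d,i)\cap\mathbb{P}^d_\circ$ equals $d\kappa\!\left(B(d,i)\cap\mathbb{P}^d_\circ\right)$ (as noted before Proposition~\ref{PPP.sec.1.prop.1}), this greedy choice takes all $A$ points of norm at most $p-1$, at cost $d\kappa\!\left(B(d,p-1)\cap\mathbb{P}^d_\circ\right)$, then adds $\lfloor dr/p\rfloor$ points of norm $p$; there are enough of them because $dk<d\kappa\!\left(B(d,p)\cap\mathbb{P}^d_\circ\right)$ forces $\lfloor dr/p\rfloor<n_p$. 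Its cardinality $A+\lfloor dr/p\rfloor$ equals $\lfloor\lambda(d,k)\rfloor$ by the slope-$d/p$ formula recorded right after Theorem~\ref{PPP.sec.0.thm.1}. As any $\left(A+\lfloor dr/p\rfloor+1\right)$-point subset has total $1$-norm exceeding $dk$, no admissible $\mathcal{X}$ can have more points, which gives $|\mathcal{X}|\leq\lfloor\lambda(d,k)\rfloor$.

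For the second part, the crucial observation is that conditions (ii) and (iii) make the greedy choice \emph{saturate} the budget exactly. Indeed, if $p=md$ with an integer $m\geq2$ and $m\mid r$, then $dr/p=r/m$ is a non-negative integer, so the greedy count is $A+r/m$ and its total $1$-norm is $d\kappa\!\left(B(d,p-1)\cap\mathbb{P}^d_\circ\right)+p\cdot(r/m)=d\kappa\!\left(B(d,p-1)\cap\mathbb{P}^d_\circ\right)+dr=dk$. Now let $\mathcal{X}$ be admissible with $|\mathcal{X}|=\lfloor\lambda(d,k)\rfloor=A+r/m$. Because $dk$ is the minimum total $1$-norm over all sets of that cardinality, we obtain $dk\leq\sum_{x\in\mathcal{X}}\|x\|_1\leq d\kappa(\mathcal{X})\leq dk$, so all three inequalities are equalities; in particular $\mathcal{X}$ realises the minimum total $1$-norm $dk$ (and, via the equality case of Proposition~\ref{PPP.sec.3.prop.1}, is $C$-invariant with $\kappa(\mathcal{X})=k$). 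It then remains to show that a minimum-total-$1$-norm set of cardinality $A+r/m$ is nested between $B(d,p-1)\cap\mathbb{P}^d_\circ$ and $B(d,p)\cap\mathbb{P}^d_\circ$, which I would do by an exchange argument: if a point $y$ of norm at most $p-1$ were missing from $\mathcal{X}$, then, as $|\mathcal{X}|>A$, the set $\mathcal{X}$ would contain a point $z$ of norm at least $p$, and replacing $z$ by $y$ keeps the cardinality while strictly lowering the total $1$-norm, contradicting minimality; hence $B(d,p-1)\cap\mathbb{P}^d_\circ\subset\mathcal{X}$. The remaining $r/m$ points then have total norm exactly $dk-d\kappa\!\left(B(d,p-1)\cap\mathbb{P}^d_\circ\right)=p\cdot(r/m)$ with each contributing at least $p$, so each contributes exactly $p$, giving $\mathcal{X}\subset B(d,p)\cap\mathbb{P}^d_\circ$.

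The main obstacle is not conceptual but organisational: one must make the two exchange arguments (``greedy is optimal'' and ``minimum-weight sets are nested'') airtight, and—most delicately—verify that the floor defining $\lfloor\lambda(d,k)\rfloor$ collapses exactly under (ii) and (iii), so that the budget $dk$ is attained on the nose. That exact saturation is the hinge of the whole proof: it is what upgrades the chain of inequalities to a chain of equalities and thereby forces both inclusions.
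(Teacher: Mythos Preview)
Your proof is correct and follows essentially the same route as the paper's: both pass through Proposition~\ref{PPP.sec.3.prop.1} to bound the total $1$-norm by $dk$, both observe that the minimum total $1$-norm of a set of given size is achieved by filling in points norm-by-norm (the paper phrases this via a bijection $\psi$ onto a subset $\mathcal{X}^\star$, you via a greedy/knapsack description), and both deduce the inclusions from the fact that under (ii)--(iii) the inequality chain collapses to equalities. The only cosmetic slip is the clause ``as $|\mathcal{X}|>A$'' in your exchange argument, which should read $|\mathcal{X}|\geq A$ (when $r=0$ one has $|\mathcal{X}|=A$, but the pigeonhole still supplies a point of norm at least $p$); this does not affect the argument.
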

\begin{proof}
Assume that $\kappa(\mathcal{X})\leq{k}$. We further assume without loss of generality that $|\mathcal{X}|$ is equal to $\delta_z(d,k)$.  Let $p$ be the smallest integer such that $k$ is less than $\kappa(B(d,p)\cap\mathbb{P}^d_\circ)$. It follows that $\kappa(B(d,p-1)\cap\mathbb{P}^d_\circ)\leq{k}$ and therefore,
$$
\left|B(d,p-1)\cap\mathbb{P}^d_\circ\right|\leq\delta_z(d,k)\mbox{.}
$$

Since the right-hand side of this inequality is also the cardinality of $\mathcal{X}$, we can consider a subset $\mathcal{X}^\star$ of $\mathcal{X}$ with the same cardinality than $B(d,p-1)\cap\mathbb{P}^d_\circ$. We can also require that the sum of the $1$-norms of the points in $\mathcal{X}^\star$ is as small as possible. We then consider a bijection $\psi:B(d,p-1)\cap\mathbb{P}^d_\circ\rightarrow\mathcal{X}^\star$. We can assume without loss of generality that the restriction of $\psi$ to $B(d,p-1)\cap\mathcal{X}$ is the identity. By this assumption, the $1$-norm of an element $y$ of $B(d,p-1)\cap\mathbb{P}^d_\circ$ is at most that of its image by $\psi$. Indeed, either $\psi(y)$ coincides with $y$ or it is outside of $B(d,p-1)$. In the latter case, the $1$-norm of $\psi(y)$ is at least $p$, while the $1$-norm of $y$ is at most $p-1$. According to Proposition \ref{PPP.sec.3.prop.1},
$$
\kappa(\mathcal{X})\geq\frac{1}{d}\sum_{x\in\mathcal{X}}\|x\|_1
$$
and, as $B(d,p-1)\cap\mathbb{P}^d_\circ$ is invariant under the action of $C$ on $\mathbb{R}^d$,
$$
\kappa\!\left(B(d,p-1)\cap\mathbb{P}^d_\circ\right)=\frac{1}{d}\sum\|y\|_1\mbox{,}
$$
where the sum in the right-hand side of this equality is over the points $y$ contained in $B(d,p-1)\cap\mathbb{P}^d_\circ$. Since $\psi$ cannot make $1$-norms decrease, 
$$
\kappa(\mathcal{X})\geq\kappa\!\left(B(d,p-1)\cap\mathbb{P}^d_\circ\right)+\frac{1}{d}\sum_{\substack{x\in\mathcal{X}\\x\not\in\mathcal{X}^\star}}\|x\|_1\mbox{.}
$$

In addition, if $B(d,p-1)\cap\mathbb{P}^d_\circ$ and $\mathcal{X}^\star$ do not coincide, then this inequality is strict.
As all the points in $\mathcal{X}\mathord{\setminus}\mathcal{X}^\star$ have $1$-norm at least $p$,
\begin{equation}\label{PPP.sec.3.lem.0.5.eq.1}
\kappa(\mathcal{X})\geq\kappa\!\left(B(d,p-1)\cap\mathbb{P}^d_\circ\right)+p\frac{|\mathcal{X}|-\left|B(d,p-1)\cap\mathbb{P}^d_\circ\right|}{d}\mbox{.}
\end{equation}

Note that this inequality is strict as soon as a point in $\mathcal{X}\mathord{\setminus}\mathcal{X}^\star$ has $1$-norm greater than $p$ or $B(d,p-1)\cap\mathbb{P}^d_\circ$ does not coincide with $\mathcal{X}^\star$. As $\kappa(\mathcal{X})\leq{k}$ and $|\mathcal{X}|=\delta_z(d,k)$, we obtain from (\ref{PPP.sec.3.lem.0.5.eq.1}) the desired upper bound on $|\mathcal{X}|$.

Now assume that $d$ is a proper divisor of $p$ and that $p/d$ is a divisor of $k-\kappa(B(d,p-1)\cap\mathbb{P}^d_\circ)$. Observe that, in this case, $\lambda(d,k)$ is an integer. Therefore, if $\mathcal{X}$ has cardinality $\lfloor\lambda(d,k)\rfloor$, then (\ref{PPP.sec.3.lem.0.5.eq.1}) must be an equality. In this case, as we discussed above, $B(d,p-1)\cap\mathbb{P}^d_\circ$ must coincide with $\mathcal{X}^\star$. In other words, $B(d,p-1)\cap\mathbb{P}^d_\circ$ is a subset of $\mathcal{X}$. In addition, all the points in $\mathcal{X}\mathord{\setminus}\mathcal{X}^\star$ must have $1$-norm at most $p$. It follows that $\mathcal{X}$ is a subset of $B(d,p)\cap\mathbb{P}^d_\circ$.
\end{proof}

It is an immediate consequence, of Lemma \ref{PPP.sec.3.lem.0.5} that $\delta_z(d,k)\leq\lfloor\lambda(d,k)\rfloor$ for all $d$ and $k$. We can refine this bound as follows.
%

\begin{lem}\label{PPP.sec.3.lem.1}
Consider the smallest integer $p$ such that $k<\kappa\!\left(B(d,p)\cap\mathbb{P}^d_\circ\right)$. If $d$ is a proper divisor of $p$ and either
\begin{itemize}
\item[(i)] $k-\kappa\!\left(B(d,p-1)\cap\mathbb{P}^d_\circ\right)$ is equal to $p/d$, or
\item[(ii)] $\kappa\!\left(B(d,p)\cap\mathbb{P}^d_\circ\right)-k$ is equal to $p/d$,
\end{itemize}
then $\delta_z(d,k)$ cannot be equal to $\lfloor\lambda(d,k)\rfloor$.
\end{lem}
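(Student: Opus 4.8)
The plan is to argue by contradiction, reducing everything to Proposition~\ref{PPP.sec.3.prop.1} and Lemma~\ref{PPP.sec.3.lem.0.5}. Write $p=dt$; since $d$ is a \emph{proper} divisor of $p$ we have $t\geq 2$, so in particular $p>d$. On the interval containing $k$ the slope of $\lambda(d,\cdot)$ is $d/p=1/t$, hence $\lambda(d,k)=|B(d,p-1)\cap\mathbb{P}^d_\circ|+\tfrac1t\bigl(k-\kappa(B(d,p-1)\cap\mathbb{P}^d_\circ)\bigr)$. In case (i) the hypothesis $k-\kappa(B(d,p-1)\cap\mathbb{P}^d_\circ)=t$ turns this into $\lambda(d,k)=|B(d,p-1)\cap\mathbb{P}^d_\circ|+1$. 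In case (ii), feeding the identity $\kappa(B(d,p)\cap\mathbb{P}^d_\circ)-\kappa(B(d,p-1)\cap\mathbb{P}^d_\circ)=t\,|S(d,p)\cap\mathbb{P}^d_\circ|$ recorded after Proposition~\ref{PPP.sec.1.prop.1} into the same formula gives $\lambda(d,k)=|B(d,p)\cap\mathbb{P}^d_\circ|-1$. In both cases $\lambda(d,k)$ is an integer, so I suppose for contradiction that there is an $\mathcal{X}\subset\mathbb{P}^d_\circ$ with $\kappa(\mathcal{X})\leq k$ and $|\mathcal{X}|=\lfloor\lambda(d,k)\rfloor=\lambda(d,k)$.

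Next I would invoke Lemma~\ref{PPP.sec.3.lem.0.5}. Its hypothesis that $p/d$ divides $k-\kappa(B(d,p-1)\cap\mathbb{P}^d_\circ)$ holds in both cases: this quantity equals $t$ in case (i), and equals $t\,(|S(d,p)\cap\mathbb{P}^d_\circ|-1)$ in case (ii) by the computation above. The lemma then yields $B(d,p-1)\cap\mathbb{P}^d_\circ\subset\mathcal{X}\subset B(d,p)\cap\mathbb{P}^d_\circ$. Comparing cardinalities with the first paragraph, this forces $\mathcal{X}$ to be $B(d,p-1)\cap\mathbb{P}^d_\circ$ together with a single extra primitive point $x$ of $1$-norm $p$ in case (i), and to be $B(d,p)\cap\mathbb{P}^d_\circ$ with a single point $x$ of $1$-norm $p$ deleted in case (ii). Note that $S(d,p)\cap\mathbb{P}^d_\circ$ contains no $C$-fixed point, since such a point would have $1$-norm $d<p$; as this set is nonempty and $C$-invariant it therefore has size at least $2$, which makes the deletion in case (ii) consistent.

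The two cases now close identically. In each, $\mathcal{X}$ is obtained from a $C$-invariant set by adding or removing the single point $x$ with $\|x\|_1=p$, so $\mathcal{X}$ cannot itself be $C$-invariant: invariance would force $C\mathord{\cdot}x=\{x\}$, but the only primitive points fixed by $C$ have all coordinates of absolute value $1$ and hence $1$-norm $d<p$. Proposition~\ref{PPP.sec.3.prop.1} therefore applies with \emph{strict} inequality, $\kappa(\mathcal{X})>\tfrac1d\sum_{y\in\mathcal{X}}\|y\|_1$. Evaluating the average using the $C$-invariance of $B(d,p-1)\cap\mathbb{P}^d_\circ$ and of $B(d,p)\cap\mathbb{P}^d_\circ$ gives exactly $k$: the added point contributes $p/d=t$ on top of $\kappa(B(d,p-1)\cap\mathbb{P}^d_\circ)$ in case (i), whereas deleting a point of norm $p=dt$ lowers $d\,\kappa(B(d,p)\cap\mathbb{P}^d_\circ)$ to $d\,\kappa(B(d,p)\cap\mathbb{P}^d_\circ)-dt=d\,k$ in case (ii). Thus $\kappa(\mathcal{X})>k$, contradicting $\kappa(\mathcal{X})\leq k$; since $\delta_z(d,k)\leq\lfloor\lambda(d,k)\rfloor$ always, it must in fact be strictly smaller.

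The main obstacle is the middle step: recognizing that the two numerical hypotheses place $k$ at the precise spot where $\lfloor\lambda(d,k)\rfloor$ equals one of the cross-polytope cardinalities shifted by $1$, so that Lemma~\ref{PPP.sec.3.lem.0.5} rigidifies the structure of any optimal $\mathcal{X}$. After that, the one delicate point is checking that the single modified point is not $C$-fixed, which is exactly where $d$ being a proper divisor of $p$, i.e.\ $p>d$, is essential.
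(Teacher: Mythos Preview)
Your proof is correct and follows the same overall strategy as the paper: assume an optimal $\mathcal{X}$ exists, invoke Lemma~\ref{PPP.sec.3.lem.0.5} to force $B(d,p-1)\cap\mathbb{P}^d_\circ\subset\mathcal{X}\subset B(d,p)\cap\mathbb{P}^d_\circ$, and then observe that $\mathcal{X}$ differs from a $C$-invariant cross-polytope slice by exactly one point $x$ of $1$-norm $p$.

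The only difference is in how the contradiction is closed. You argue that the single modification breaks $C$-invariance (since a $C$-fixed primitive point has $1$-norm $d<p$), so Proposition~\ref{PPP.sec.3.prop.1} applies strictly and $\kappa(\mathcal{X})>\tfrac{1}{d}\sum\|y\|_1=k$. The paper instead computes $\kappa(\mathcal{X})$ directly: since the underlying ball is $C$-invariant, $\kappa(\mathcal{X})=\kappa(B(d,p-1)\cap\mathbb{P}^d_\circ)+\max_i|x_i|$, and the bound $\kappa(\mathcal{X})\leq k$ forces $\max_i|x_i|\leq p/d$; combined with $\sum_i|x_i|=p$ this pins down $|x_i|=p/d$ for every $i$, so $x$ is not primitive. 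Your route is a bit more uniform across the two cases and avoids identifying $x$ explicitly; the paper's route makes visible exactly which nonexistent lattice point is the obstruction. Both hinge on $p>d$ in the same way.
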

\begin{proof}
Consider a subset $\mathcal{X}$ of $\mathbb{P}^d_\circ$ such that $\kappa(\mathcal{X})$ is at most $k$. Assume that $\mathcal{X}$ has cardinality exactly $\lfloor\lambda(d,k)\rfloor$. Hence, by the definition of $\lambda(d,k)$,
\begin{equation}\label{PPP.sec.3.lem.1.eq.1}
|\mathcal{X}|=\left|B(d,p-1)\cap\mathbb{P}^d_\circ\right|+\left\lfloor\frac{k-\kappa\!\left(B(d,p-1)\cap\mathbb{P}^d_\circ\right)}{p/d}\right\rfloor\!\mbox{.}
\end{equation}

Further assume that $d$ is a proper divisor of $p$ and let us reach a contradiction in each of the two special cases considered in the statement of the lemma. 

We first treat the case when
\begin{equation}\label{PPP.sec.3.lem.1.eq.2}
k-\kappa\!\left(B(d,p-1)\cap\mathbb{P}^d_\circ\right)=\frac{p}{d}\mbox{.}
\end{equation}

In this case, by Lemma \ref{PPP.sec.3.lem.0.5}, $B(d,p-1)\cap\mathbb{P}^d_\circ$ is a subset of $\mathcal{X}$, which in turn is contained in $B(d,p)$. Combining (\ref{PPP.sec.3.lem.1.eq.1}) with (\ref{PPP.sec.3.lem.1.eq.2}), we obtain that
$$
|\mathcal{X}|=\left|B(d,p-1)\cap\mathbb{P}^d_\circ\right|+1\mbox{.}
$$

In other words, $\mathcal{X}$ contains exactly one element $x$ of $B(d,p)\cap\mathbb{P}^d_\circ$. According to (\ref{PPP.sec.3.lem.1.eq.2}) and to Proposition \ref{PPP.sec.3.prop.1}, the absolute value of all the coordinates of $x$ must be equal to $p/d$. Since $d$ is a proper divisor of $p$, $x$ cannot be primitive. This contradiction shows that $\delta_z(d,k)$ cannot coincide with $\lfloor\lambda(d,k)\rfloor$.

Now, assume that $d$ is a proper divisor of $p$ and that
\begin{equation}\label{PPP.sec.3.lem.1.eq.3}
\kappa\!\left(B(d,p)\cap\mathbb{P}^d_\circ\right)-k=\frac{p}{d}\mbox{.}
\end{equation}

Recall that, according to Proposition \ref{PPP.sec.1.prop.1},
\begin{equation}\label{PPP.sec.3.lem.1.eq.4}
\kappa\!\left(B(d,p)\cap\mathbb{P}^d_\circ\right)-\kappa\!\left(B(d,p-1)\cap\mathbb{P}^d_\circ\right)=\frac{p\!\left|S(d,p)\cap\mathbb{P}^d_\circ\right|}{d}\mbox{.}
\end{equation}

Combining the equalities (\ref{PPP.sec.3.lem.1.eq.1}), (\ref{PPP.sec.3.lem.1.eq.3}), and (\ref{PPP.sec.3.lem.1.eq.4}) yields
$$
|\mathcal{X}|=\left|B(d,p)\cap\mathbb{P}^d_\circ\right|-1\mbox{.}
$$

Again, it follows from Lemma \ref{PPP.sec.3.lem.0.5} that $B(d,p-1)\cap\mathbb{P}^d_\circ$ is a subset of $\mathcal{X}$, and $\mathcal{X}$ a subset of $B(d,p)\cap\mathbb{P}^d_\circ$. Hence, $\mathcal{X}$ is obtained by removing a single point of $1$-norm $p$ from $B(d,p)\cap\mathbb{P}^d_\circ$. According to (\ref{PPP.sec.3.lem.1.eq.3}) and to Proposition~\ref{PPP.sec.3.prop.1}, the absolute value of all the coordinates of $x$ must be equal to $p/d$ and, since $d$ is a proper divisor of $p$, $x$ cannot be a primitive point. It follows from this contradiction, that $\delta_z(d,k)$ is not equal to $\lfloor\lambda(d,k)\rfloor$.
\end{proof}

In the $2$-dimensional case, we can further refine our bound as follows.

\begin{lem}\label{PPP.sec.3.lem.2}
Consider the  smallest integer $p$ such that $k<\kappa(B(2,p)\cap\mathbb{P}^2_\circ)$. If $p$ is a multiple of $4$ and $k-\kappa(B(2,p-1)\cap\mathbb{P}^2_\circ)$ is an odd multiple of $p/2$, then $\delta_z(2,k)$ cannot be equal to $\lfloor\lambda(d,k)\rfloor$.
\end{lem}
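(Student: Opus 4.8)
The plan is to mimic the strategy of Lemma~\ref{PPP.sec.3.lem.1}, but to replace its crude primitivity argument by a finer parity count that exploits the hypothesis $4\mid p$ rather than merely $2\mid p$. I would argue by contradiction, assuming that some subset $\mathcal{X}$ of $\mathbb{P}^2_\circ$ satisfies $\kappa(\mathcal{X})\leq{k}$ and $|\mathcal{X}|=\lfloor\lambda(2,k)\rfloor$. Writing the hypothesis as $k-\kappa(B(2,p-1)\cap\mathbb{P}^2_\circ)=(2m+1)(p/2)$ for a non-negative integer $m$, one checks that $\lambda(2,k)$ is an integer equal to $|B(2,p-1)\cap\mathbb{P}^2_\circ|+(2m+1)$, since the slope of the interpolation on this interval is $2/p$. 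As $d=2$ is a proper divisor of $p$ and $p/2$ divides $k-\kappa(B(2,p-1)\cap\mathbb{P}^2_\circ)$, Lemma~\ref{PPP.sec.3.lem.0.5} applies and yields
$$
B(2,p-1)\cap\mathbb{P}^2_\circ\subset\mathcal{X}\subset{B(2,p)\cap\mathbb{P}^2_\circ}\mbox{.}
$$
Hence $\mathcal{X}$ is obtained from $B(2,p-1)\cap\mathbb{P}^2_\circ$ by adjoining a set $\mathcal{Y}$ of exactly $2m+1$ primitive points of $1$-norm $p$.

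Next I would split $\kappa$ into its two coordinate sums. Because $B(2,p-1)\cap\mathbb{P}^2_\circ$ is invariant under $C$, Proposition~\ref{PPP.sec.3.prop.1} gives $\sum|x_1|=\sum|x_2|=\kappa(B(2,p-1)\cap\mathbb{P}^2_\circ)$ over that set, so that
$$
\kappa(\mathcal{X})=\kappa\!\left(B(2,p-1)\cap\mathbb{P}^2_\circ\right)+\max\!\left(\sum_{y\in\mathcal{Y}}|y_1|,\ \sum_{y\in\mathcal{Y}}|y_2|\right)\!\mbox{.}
$$
Setting $A=\sum_{y\in\mathcal{Y}}|y_1|$ and $B=\sum_{y\in\mathcal{Y}}|y_2|$, and using that every point of $\mathcal{Y}$ has $1$-norm $p$, one gets $A+B=(2m+1)p$. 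The requirement $\kappa(\mathcal{X})\leq{k}=\kappa(B(2,p-1)\cap\mathbb{P}^2_\circ)+(2m+1)(p/2)$ forces $\max(A,B)\leq(2m+1)(p/2)$, which combined with $A+B=(2m+1)p$ can hold only if $A=B=(2m+1)(p/2)$.

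The contradiction then comes from a parity obstruction, which I expect to be the heart of the argument. Since $p$ is even and each $y\in\mathcal{Y}$ is primitive with $|y_1|+|y_2|=p$, the integers $|y_1|$ and $|y_2|$ have the same parity and are coprime, hence both odd. Consequently $A$ is a sum of $2m+1$ odd integers and is therefore odd. On the other hand, since $4\mid p$ the quantity $p/2$ is even, so $(2m+1)(p/2)$ is even, contradicting $A=(2m+1)(p/2)$. This rules out the existence of $\mathcal{X}$ and shows $\delta_z(2,k)\neq\lfloor\lambda(2,k)\rfloor$. The one delicate point, and the reason the hypothesis must be $4\mid p$ and not just $2\mid p$, is precisely this final bookkeeping: it is the evenness of $p/2$ together with the oddness of the number $2m+1$ of adjoined points that produces the mismatch, whereas for $p$ merely even the coarser coordinate-value argument of Lemma~\ref{PPP.sec.3.lem.1} is the appropriate tool.
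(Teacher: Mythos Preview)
Your proof is correct and follows essentially the same strategy as the paper: invoke Lemma~\ref{PPP.sec.3.lem.0.5} to force $B(2,p-1)\cap\mathbb{P}^2_\circ\subset\mathcal{X}\subset B(2,p)\cap\mathbb{P}^2_\circ$, isolate the set $\mathcal{Y}$ of adjoined points of $1$-norm $p$, show that $\sum_{y\in\mathcal{Y}}|y_1|=(2m+1)p/2$, and derive a parity contradiction from $4\mid p$.

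There is one minor but genuine difference in how the coordinate-sum equality is reached. The paper squeezes $\kappa(\mathcal{X})$ between the bound coming from $\kappa(\mathcal{X})\leq k$ and the lower bound from Proposition~\ref{PPP.sec.3.prop.1}, concludes that $\mathcal{X}$ (hence $\mathcal{Y}$) is invariant under $C$, and then reads off $\sum_{y\in\mathcal{Y}}|y_1|=p|\mathcal{Y}|/2$ from that invariance. You bypass the invariance step entirely: from $A+B=(2m+1)p$ and $\max(A,B)\leq(2m+1)p/2$ you get $A=B=(2m+1)p/2$ directly. Your route is shorter and avoids the appeal to the equality case of Proposition~\ref{PPP.sec.3.prop.1}; the paper's route, on the other hand, additionally yields the structural fact that any extremal $\mathcal{X}$ would have to be $C$-invariant, which is of independent interest even though it is not needed for the contradiction here. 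The final parity step is the same argument read in the two directions: you argue that $A$ is odd (sum of an odd number of odd terms) while the target is even, whereas the paper argues that the target is even and hence some $|y_1|$ is even, forcing both coordinates of $y$ even.
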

\begin{proof}
Assume that $p$ is a multiple of $4$ and $k-\kappa(B(2,p-1)\cap\mathbb{P}^2_\circ)$ an odd multiple of $p/2$. Consider a subset $\mathcal{X}$ of $\mathbb{P}^d_\circ$ such that $\kappa(\mathcal{X})$ is not greater than $k$. We proceed as in the proof of Lemma \ref{PPP.sec.3.lem.1} by assuming that
\begin{equation}\label{PPP.sec.3.lem.2.eq.1}
|\mathcal{X}|=\left|B(2,p-1)\cap\mathbb{P}^2_\circ\right|+\left\lfloor\frac{k-\kappa\!\left(B(2,p-1)\cap\mathbb{P}^2_\circ\right)}{p/2}\right\rfloor\!\mbox{,}
\end{equation}
and we aim for a contradiction.

According to Lemma \ref{PPP.sec.3.lem.0.5}, $\mathcal{X}$ admits $B(2,p-1)\cap\mathbb{P}^2_\circ$ as a subset and is itself a subset of $B(2,p)\cap\mathbb{P}^2_\circ$. Denote by $\mathcal{Y}$ the set of the points of $1$-norm $p$ contained in $\mathcal{X}$. Since $k-\kappa(B(2,p-1)\cap\mathbb{P}^2_\circ)$ an odd multiple of $p/2$, it follows from (\ref{PPP.sec.3.lem.2.eq.1}) that the cardinality of $\mathcal{Y}$ is odd. It also follows from (\ref{PPP.sec.3.lem.2.eq.1}) that
$$
|\mathcal{Y}|=\frac{k-\kappa\!\left(B(2,p-1)\cap\mathbb{P}^2_\circ\right)}{p/2}\mbox{.}
$$

Therefore, as $\kappa(\mathcal{X})\leq{k}$ and as the points in $\mathcal{Y}$ have $1$-norm $p$,
$$
\kappa(\mathcal{X})\leq\kappa\!\left(B(2,p-1)\cap\mathbb{P}^2_\circ\right)+\frac{1}{2}\sum_{x\in\mathcal{Y}}\|x\|_1\mbox{.}
$$

Observe that, by Proposition~\ref{PPP.sec.3.prop.1} the opposite inequality holds. Hence, according to the same proposition, $\mathcal{X}$ is invariant under the action of $C$ on $\mathbb{R}^2$. Since $\mathcal{X}\mathord{\setminus}\mathcal{Y}$ is also invariant under that action, then so is $\mathcal{Y}$. Therefore,
$$
\sum_{x\in\mathcal{Y}}|x_1|=\frac{p|\mathcal{Y}|}{2}\mbox{.}
$$

However, since $p$ is a multiple of $4$, the right-hand side of this equality is even. As $|\mathcal{Y}|$ is odd, there must exist a point $x$ in $\mathcal{Y}$ such that $x_1$ is even. However, since the $1$-norm of $x$ is even, the coordinates of $x$ must both be even. It follows that $x$ cannot be primitive, a contradiction.
\end{proof}

\section{An expression for $\delta_z(d,k)$ when $d\geq3$}\label{PPP.sec.3.5}

In this section, we build a subset of $\mathbb{P}^d_\circ$ that achieves the bound on $\delta_z(d,k)$ provided by Lemmas~\ref{PPP.sec.3.lem.0.5} and \ref{PPP.sec.3.lem.1} when $d>2$ and for a number of values of $k$ when $d=2$. We will distinguish two cases depending on whether $d$ is, or not, a divisor of the smallest integer $p$ such that $k<\kappa(B(d,p)\cap\mathbb{P}^d_\circ)$. Throughout the section, we assume that $k$ and $d$ (and therefore $p$) are fixed.

Let us first address the case when $d$ is not a proper divisor of $p$. Note that, in this case, our construction will be done for all $d\geq2$. We introduce, for the purpose of the construction, $d$ sets that we denote $\mathcal{S}_1$ to $\mathcal{S}_d$, made up of points of $1$-norm $p$ from $\mathbb{P}^d_\circ$. Denote by $r$ the remainder of the integer division of $p$ by $d$ and recall that $\lfloor{p/d}\rfloor$ is the quotient of this division. As a consequence, the lattice point $x$ such that $x_i=\lceil{p/d}\rceil$ when $i\leq{r}$ and $x_i=\lfloor{p/d}\rfloor$ otherwise has $1$-norm $p$. Moreover, $\lceil{p/d}\rceil$ and $\lfloor{p/d}\rfloor$ are relatively prime as they differ by $1$, and $x$ is necessarily primitive. For any integer $j$ such that $1\leq{j}\leq{d}$, denote
$$
\mathcal{S}_j=\{\tau^{ir}(x):1\leq{i}\leq{j}\}\mbox{.}
$$

As announced, $\mathcal{S}_j$ is made up of $j$ points of $1$-norm $p$ from $\mathbb{P}^d_\circ$. Moreover,
\begin{equation}\label{PPP.sec.3.5.eq.1}
\kappa\!\left(\mathcal{S}_j\right)=\left\lceil\frac{jp}{d}\right\rceil\!\mbox{.}
\end{equation}

Using the sets $\mathcal{S}_1$ to $\mathcal{S}_d$, we prove the following.

\begin{lem}\label{PPP.sec.3.5.lem.1}
If $d$ is not a divisor of $p$, then there exists a subset $\mathcal{X}$ of $\mathbb{P}^d_\circ$ such that $\kappa(\mathcal{X})$ is at most $k$ and $|\mathcal{X}|$ is equal to $\lfloor\lambda(d,k)\rfloor$.
\end{lem}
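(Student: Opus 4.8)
The plan is to meet the upper bound of Lemma~\ref{PPP.sec.3.lem.0.5} by enlarging the $C$-invariant set $B(d,p-1)\cap\mathbb{P}^d_\circ$ with primitive points of $1$-norm exactly $p$. Write $\Delta=k-\kappa(B(d,p-1)\cap\mathbb{P}^d_\circ)$ and $N=|S(d,p)\cap\mathbb{P}^d_\circ|$. Since the interpolation $\lambda(d,k)$ has slope $d/p$ on the relevant interval and $|B(d,p-1)\cap\mathbb{P}^d_\circ|$ is an integer, one has $\lfloor\lambda(d,k)\rfloor=|B(d,p-1)\cap\mathbb{P}^d_\circ|+m$ with $m=\lfloor d\Delta/p\rfloor$. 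Hence it suffices to produce a set $\mathcal{A}$ of $m$ primitive points of $1$-norm $p$ from $\mathbb{P}^d_\circ$ with $\kappa(\mathcal{A})\leq\Delta$ and to take $\mathcal{X}=(B(d,p-1)\cap\mathbb{P}^d_\circ)\cup\mathcal{A}$. Indeed, because $B(d,p-1)\cap\mathbb{P}^d_\circ$ is invariant under $C$, all of its coordinate sums are equal (each to $\kappa(B(d,p-1)\cap\mathbb{P}^d_\circ)$ by Proposition~\ref{PPP.sec.3.prop.1}), so adjoining $\mathcal{A}$ raises the largest coordinate sum by exactly $\kappa(\mathcal{A})$; this gives $\kappa(\mathcal{X})=\kappa(B(d,p-1)\cap\mathbb{P}^d_\circ)+\kappa(\mathcal{A})\leq k$ while $|\mathcal{X}|=\lfloor\lambda(d,k)\rfloor$.

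To build $\mathcal{A}$ I would use the orbit decomposition of the $1$-norm-$p$ points of $\mathbb{P}^d_\circ$ under $C$. Each orbit is $C$-invariant, so by Proposition~\ref{PPP.sec.3.prop.1} an orbit of size $t$ has $\kappa$ equal to $tp/d$, which is an integer. The base point $x$ of the sets $\mathcal{S}_1,\dots,\mathcal{S}_d$ has a full orbit $C\mathord{\cdot}x$ of size $d$ (its $d$ points are those of $\mathcal{S}_d$), and by~(\ref{PPP.sec.3.5.eq.1}) the $j$-point subset $\mathcal{S}_j$ realizes the least possible value $\kappa(\mathcal{S}_j)=\lceil jp/d\rceil$. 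I then set $\mathcal{A}=\mathcal{G}\cup\mathcal{S}_{m-M}$, where $\mathcal{G}$ is a union of complete orbits, none of them equal to $C\mathord{\cdot}x$, chosen so that $M=|\mathcal{G}|$ satisfies $m-d\leq M\leq m$. As a union of complete orbits, $\mathcal{G}$ is $C$-invariant, so each of its coordinate sums equals the integer $Mp/d$, and $\mathcal{S}_{m-M}\subseteq C\mathord{\cdot}x$ is disjoint from $\mathcal{G}$.

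The verification is then short. First $|\mathcal{A}|=M+(m-M)=m$. Next, arguing as for $\mathcal{X}$, the constant coordinate sums of $\mathcal{G}$ give $\kappa(\mathcal{A})=Mp/d+\kappa(\mathcal{S}_{m-M})=Mp/d+\lceil(m-M)p/d\rceil$, which equals $\lceil mp/d\rceil$ because $Mp/d$ is an integer. Finally $m=\lfloor d\Delta/p\rfloor$ yields $mp/d\leq\Delta$, and since $\Delta$ is an integer, $\kappa(\mathcal{A})=\lceil mp/d\rceil\leq\Delta$, as needed.

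The step needing the most care, and the main obstacle, is the existence of an admissible $M$. I would first note that $m\leq N-1$: this is because $\Delta<\kappa(B(d,p)\cap\mathbb{P}^d_\circ)-\kappa(B(d,p-1)\cap\mathbb{P}^d_\circ)=pN/d$ by Proposition~\ref{PPP.sec.1.prop.1}, so $d\Delta/p<N$. Listing the orbits other than $C\mathord{\cdot}x$ and forming their partial sums $0,t_1,t_1+t_2,\dots,N-d$, consecutive values differ by at most $d$ (the largest orbit size); since these sums run from $0$ up to $N-d\geq m-d$, the smallest of them that is at least $m-d$ is strictly less than $m$, hence lies in the window $[m-d,m]$, and choosing $\mathcal{G}$ accordingly furnishes the required $M$. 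The genuine subtlety hidden here is the possible presence of orbits of size strictly less than $d$, which occur exactly when $\gcd(p,d)>1$ (for instance the size-$2$ orbit of $(1,2,1,2)$ when $d=4$ and $p=6$). It is precisely the freedom to let $M$ be any partial sum in $[m-d,m]$, rather than forcing $M=d\lfloor m/d\rfloor$, that absorbs such degenerate orbits and dissolves what would otherwise be a subset-sum obstruction.
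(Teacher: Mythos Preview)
Your proof is correct and follows essentially the same approach as the paper: both arguments enlarge $B(d,p-1)\cap\mathbb{P}^d_\circ$ first by a $C$-invariant union $\mathcal{G}$ (the paper's $\mathcal{Y}$) of orbits of $1$-norm $p$ disjoint from $C\mathord{\cdot}x$, and then by the partial orbit $\mathcal{S}_{m-M}$ (the paper's $\mathcal{S}_j$), using~(\ref{PPP.sec.3.5.eq.1}) to control $\kappa$. Your computation $\kappa(\mathcal{A})=\lceil mp/d\rceil\leq\Delta$ and your explicit partial-sum argument for the existence of $M$ make transparent two steps the paper treats more tersely, but the construction and the logic are the same; your closing remark about short orbits being a special feature of your approach is slightly overstated, since the paper's choice of $\mathcal{Y}$ accommodates them in exactly the same way.
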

\begin{proof}
Assume that $d$ is not a divisor of $p$ and consider a point $x$ of $1$-norm $p$ in $\mathbb{P}^d_\circ$. Since $C$ has order $d$, $C\mathord{\cdot}x$ has cardinality at most $d$. In particular, the $1$-norms of the elements of $C\mathord{\cdot}x$ sum to at most $dp$. Observe that $C\mathord{\cdot}x$ coincides with $\mathcal{S}_d$ when $x$ is the unique element of $\mathcal{S}_1$, and that $\mathcal{S}_d$ has cardinality exactly $d$. Further observe that the points of $1$-norm $p$ in $\mathbb{P}^d_\circ$ are partitioned into the orbits of their elements under the action of $C$ on $\mathbb{R}^d$. Hence, one can take the union of some of these orbits in order to build a set $\mathcal{Y}$ of points of $1$-norm $p$ from $\mathbb{P}^d_\circ$ disjoint from $\mathcal{S}_d$, invariant under the action of $C$ on $\mathbb{R}^d$, and that satisfies
$$
\frac{k-\kappa\!\left(B(d,p-1)\cap\mathbb{P}^d_\circ\right)}{p/d}-d<\frac{1}{p}\sum_{x\in\mathcal{Y}}\|x\|_1\leq\frac{k-\kappa\!\left(B(d,p-1)\cap\mathbb{P}^d_\circ\right)}{p/d}\mbox{.}
$$

In this case, the integer $j$ defined as
\begin{equation}\label{PPP.sec.3.5.lem.1.eq.1}
j=\left\lfloor\frac{k-\kappa\!\left(B(d,p-1)\cap\mathbb{P}^d_\circ\right)}{p/d}\right\rfloor-\frac{1}{p}\sum_{x\in\mathcal{Y}}\|x\|_1
\end{equation}
is non-negative and less than $d$. Denote
$$
\mathcal{X}=\left(B(d,p-1)\cap\mathbb{P}^d_\circ\right)\cup\mathcal{Y}\cup\mathcal{S}_j\mbox{,}
$$
with the convention that $\mathcal{S}_0$ is the empty set. By construction, both $\mathcal{Y}$ and the intersection of $B(d,p-1)$ with $\mathbb{P}^d_\circ$ are invariant under the action of $C$ on $\mathbb{R}^d$. In particular, according to Proposition \ref{PPP.sec.3.prop.1},
\begin{equation}\label{PPP.sec.3.5.lem.1.eq.2}
\kappa(\mathcal{X})=\kappa\!\left(B(d,p-1)\cap\mathbb{P}^d_\circ\right)+\frac{1}{d}\sum_{x\in\mathcal{Y}}\|x\|_1+\kappa(S_j)\mbox{.}
\end{equation}

However, it follows from (\ref{PPP.sec.3.5.lem.1.eq.1}) that
\begin{equation}\label{PPP.sec.3.5.lem.1.eq.3}
\frac{1}{d}\sum_{x\in\mathcal{Y}}\|x\|_1\leq{k-\kappa\!\left(B(d,p-1)\cap\mathbb{P}^d_\circ\right)-\frac{jp}{d}}\mbox{.}
\end{equation}

In addition, by (\ref{PPP.sec.3.5.eq.1}),
\begin{equation}\label{PPP.sec.3.5.lem.1.eq.4}
\kappa(\mathcal{S}_j)<\frac{jp}{d}+1\mbox{,}
\end{equation}

As this inequality is strict, we obtain $\kappa(\mathcal{X})\leq{k}$ by combining (\ref{PPP.sec.3.5.lem.1.eq.2}), (\ref{PPP.sec.3.5.lem.1.eq.3}), and (\ref{PPP.sec.3.5.lem.1.eq.4}). Now recall that all the points in $\mathcal{Y}$ have $1$-norm $p$ and $\mathcal{S}_j$ has cardinality $j$. As a consequence the cardinality of $\mathcal{X}$ can be decomposed as
\begin{equation}\label{PPP.sec.3.5.lem.1.eq.5}
|\mathcal{X}|=\left|B(d,p-1)\cap\mathbb{P}^d_\circ\right|+\frac{1}{p}\sum_{x\in\mathcal{Y}}\|x\|_1+j\mbox{.}
\end{equation}

Combining (\ref{PPP.sec.3.5.lem.1.eq.1}) and (\ref{PPP.sec.3.5.lem.1.eq.5}) yields
$$
|\mathcal{X}|=\left|B(d,p-1)\cap\mathbb{P}^d_\circ\right|+\left\lfloor\frac{k-\kappa\!\left(B(d,p-1)\cap\mathbb{P}^d_\circ\right)}{p/d}\right\rfloor\!\mbox{.}
$$

The right-hand side of this equality is $\lfloor\lambda(d,k)\rfloor$, as desired.
\end{proof}

We now address the case when $d$ is a divisor of $p$. In this case, the construction of subset of $\mathbb{P}^d_\circ$ of the right cardinality will rely on the following lemma.

\begin{lem}\label{PPP.sec.3.5.lem.2}
Assume that $d$ is a divisor of $p$ and that
\begin{equation}\label{PPP.sec.3.5.lem.2.eq.0}
\kappa\!\left(B(d,p-1)\cap\mathbb{P}^d_\circ\right)+\frac{2p}{d}\leq{k}<\kappa\!\left(B(d,p)\cap\mathbb{P}^d_\circ\right)-\frac{p}{d}\mbox{.}
\end{equation}

Let $\mathcal{Y}$ be the union of the orbits under the action of $C$ on $\mathbb{R}^d$ of some points of $1$-norm $p$ from $\mathbb{P}^d_\circ$. If $|\mathcal{Y}|$ is at least $d+3$ and, whenever $2\leq{j}\leq|\mathcal{Y}|-2$, there exists a subset $\mathcal{S}$ of $j$ elements of $\mathcal{Y}$ satisfying
\begin{equation}\label{PPP.sec.3.5.lem.2.eq.0.5}
\kappa(\mathcal{S})=\frac{jp}{d}\mbox{,}
\end{equation}
then there exists a subset $\mathcal{X}$ of $\mathbb{P}^d_\circ$ such that $\kappa(\mathcal{X})\leq{k}$ and $|\mathcal{X}|=\lfloor\lambda(d,k)\rfloor$.
\end{lem}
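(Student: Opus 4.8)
The plan is to construct $\mathcal{X}$ explicitly by adjoining to the $C$-invariant shell $B(d,p-1)\cap\mathbb{P}^d_\circ$ a carefully sized set $\mathcal{T}$ of points of $1$-norm $p$ from $\mathbb{P}^d_\circ$. Set
$$
j_0=\left\lfloor\frac{k-\kappa(B(d,p-1)\cap\mathbb{P}^d_\circ)}{p/d}\right\rfloor\mbox{,}
$$
so that, exactly as in (\ref{PPP.sec.3.lem.1.eq.1}), one has $|B(d,p-1)\cap\mathbb{P}^d_\circ|+j_0=\lfloor\lambda(d,k)\rfloor$. Thus it suffices to find $\mathcal{T}$ of cardinality $j_0$ with all points of $1$-norm $p$ and to control $\kappa$. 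The structural observation I would use throughout is that if $\mathcal{T}$ is invariant under $C$, then both the shell and $\mathcal{T}$ contribute equally to every coordinate sum, so by Proposition \ref{PPP.sec.3.prop.1} their values of $\kappa$ add: $\kappa(\mathcal{X})=\kappa(B(d,p-1)\cap\mathbb{P}^d_\circ)+j_0p/d$. Since the left inequality in (\ref{PPP.sec.3.5.lem.2.eq.0}) gives $j_0p/d\leq k-\kappa(B(d,p-1)\cap\mathbb{P}^d_\circ)$, any such $C$-invariant $\mathcal{T}$ of size $j_0$ yields $\kappa(\mathcal{X})\leq k$, as required. So the whole problem reduces to realizing a $C$-invariant subset of the sphere of size exactly $j_0$.

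First I would pin down the range of $j_0$. The left inequality in (\ref{PPP.sec.3.5.lem.2.eq.0}) forces $j_0\geq2$, while the right inequality, combined with the identity (\ref{PPP.sec.3.lem.1.eq.4}), namely $\kappa(B(d,p)\cap\mathbb{P}^d_\circ)-\kappa(B(d,p-1)\cap\mathbb{P}^d_\circ)=p|S(d,p)\cap\mathbb{P}^d_\circ|/d$, gives $j_0\leq|S(d,p)\cap\mathbb{P}^d_\circ|-2$. In particular there is always room to place $j_0$ points of $1$-norm $p$ inside $\mathbb{P}^d_\circ$, with at least two to spare.

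When $2\leq j_0\leq|\mathcal{Y}|-2$ the hypothesis (\ref{PPP.sec.3.5.lem.2.eq.0.5}) applies directly with $j=j_0$: it furnishes $\mathcal{S}\subseteq\mathcal{Y}$ with $|\mathcal{S}|=j_0$ and $\kappa(\mathcal{S})=j_0p/d$, and since equality in Proposition \ref{PPP.sec.3.prop.1} characterizes $C$-invariant sets, this $\mathcal{S}$ is automatically invariant under $C$ and can be taken for $\mathcal{T}$. To cover the remaining case $j_0>|\mathcal{Y}|-2$, I would split $j_0=a+b$, choosing $b$ in $\{2,\dots,d+1\}$ with $b\equiv j_0\pmod{d}$, so that $a=j_0-b$ is a nonnegative multiple of $d$ and $b\leq j_0$. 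This is precisely where the hypothesis $|\mathcal{Y}|\geq d+3$ enters: it guarantees $d+1\leq|\mathcal{Y}|-2$, so the interval $\{2,\dots,|\mathcal{Y}|-2\}$ contains the $d$ consecutive integers $2,\dots,d+1$ and hence meets every residue class modulo $d$, making the choice of $b$ possible within the range where (\ref{PPP.sec.3.5.lem.2.eq.0.5}) holds. Then I would take $\mathcal{T}=\mathcal{S}\cup\mathcal{B}$, where $\mathcal{S}\subseteq\mathcal{Y}$ is the $C$-invariant set of size $b$ supplied by the hypothesis and $\mathcal{B}$ is a union of full orbits of $1$-norm-$p$ points disjoint from $\mathcal{S}$ with $|\mathcal{B}|=a$. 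Both $\mathcal{S}$ and $\mathcal{B}$ being invariant under $C$, so is $\mathcal{T}$, and $\kappa(\mathcal{T})=(a+b)p/d=j_0p/d$ by the additivity noted above.

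I expect the main obstacle to be the bookkeeping in this last step: verifying that, after removing the $C$-invariant set $\mathcal{S}$, the sphere $S(d,p)$ still contains a union of orbits of size exactly $a$. Here I would exploit the slack $|S(d,p)\cap\mathbb{P}^d_\circ|\geq j_0+2=a+b+2$, which leaves at least $a+2$ points of $1$-norm $p$ outside $\mathcal{S}$, together with the fact that orbits of size strictly smaller than $d$ come from points fixed by a power of $\tau$ and are therefore exceptional, to extract enough full orbits to assemble $\mathcal{B}$. Once $\mathcal{T}$ is in hand, setting $\mathcal{X}=(B(d,p-1)\cap\mathbb{P}^d_\circ)\cup\mathcal{T}$ gives a subset of $\mathbb{P}^d_\circ$ with $|\mathcal{X}|=\lfloor\lambda(d,k)\rfloor$ and $\kappa(\mathcal{X})=\kappa(B(d,p-1)\cap\mathbb{P}^d_\circ)+j_0p/d\leq k$, completing the argument. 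The two ideas doing the real work are the additivity of $\kappa$ over $C$-invariant pieces from Proposition \ref{PPP.sec.3.prop.1} and the residue-covering role of the bound $|\mathcal{Y}|\geq d+3$.
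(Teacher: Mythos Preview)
Your overall strategy---adjoin to $B(d,p-1)\cap\mathbb{P}^d_\circ$ a set $\mathcal{T}$ of $j_0$ points of $1$-norm $p$ with $\kappa(\mathcal{T})=j_0p/d$---is exactly what the paper does, and your identification of the range $2\leq j_0\leq|S(d,p)\cap\mathbb{P}^d_\circ|-2$ and the additivity of $\kappa$ are correct. Two points, however, need correction.

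First, the claim that $\kappa(\mathcal{S})=jp/d$ forces $\mathcal{S}$ to be $C$-invariant is not safe: for instance in $d=2$ the set $\{(1,4),(4,-1)\}$ has both coordinate sums equal to $5$ yet is not a union of $\tau$-orbits. What $\kappa(\mathcal{S})=jp/d$ \emph{does} imply (since all $j$ points have $1$-norm $p$) is that every coordinate sum $\sum_{x\in\mathcal{S}}|x_i|$ equals $jp/d$, and that alone suffices for the additivity you use. You should not rely on the orbit structure of $\mathcal{S}$.

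Second---and this is a genuine gap---your treatment of the case $j_0>|\mathcal{Y}|-2$ fixes $b\in\{2,\dots,d+1\}$ \emph{first} and then asks for a union $\mathcal{B}$ of orbits, disjoint from $\mathcal{S}$, of size exactly $a=j_0-b$. Since $\mathcal{S}$ need not be a union of orbits, it may meet many orbits of $\mathcal{Y}$, and the slack $|S(d,p)\cap\mathbb{P}^d_\circ|\geq a+b+2$ does not by itself yield $a$ points' worth of full orbits disjoint from $\mathcal{S}$. You flag this as the main obstacle, but the appeal to small orbits being ``exceptional'' does not close it.

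The paper sidesteps this by reversing the order: it first builds the $C$-invariant piece $\mathcal{Z}$ (your $\mathcal{B}$) as a union of orbits \emph{disjoint from $\mathcal{Y}$}, asking only that $|\mathcal{Z}|$ land in the interval $\bigl(K-|\mathcal{Y}|+1,\,K-2\bigr]$ where $K=\bigl(k-\kappa(B(d,p-1)\cap\mathbb{P}^d_\circ)\bigr)\big/(p/d)$. This interval has length $|\mathcal{Y}|-3\geq d$, and since orbits have size at most $d$, a greedy union cannot skip it. Then $j=\lfloor K\rfloor-|\mathcal{Z}|$ automatically lies in $\{2,\dots,|\mathcal{Y}|-2\}$, the hypothesis supplies $\mathcal{S}\subseteq\mathcal{Y}$, and disjointness from $\mathcal{Z}$ is free because $\mathcal{Z}\cap\mathcal{Y}=\emptyset$. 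This order buys precisely the flexibility your construction of $\mathcal{B}$ lacks, and it never needs to know which orbits have size $d$.
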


\begin{proof}
Assume that $\mathcal{Y}$ has cardinality at least $d+3$. 
%
%
%
%
Consider the set of the points of $1$-norm $p$ contained in $\mathbb{P}^d_\circ$. Observe that this set is partitioned into the orbits of its elements under the action of $C$ on $\mathbb{R}^d$. Moreover, these orbits have cardinality at most $d$. As in addition, $|\mathcal{Y}|\geq{d+3}$, one can take the union of some of these orbits in order to build a set $\mathcal{Z}$ disjoint from $\mathcal{Y}$ that is invariant under the action of $C$ on $\mathbb{R}^d$ and that satisfies the inequalities
$$
\frac{k-\kappa\!\left(B(d,p-1)\cap\mathbb{P}^d_\circ\right)}{p/d}-|\mathcal{Y}|+1<\frac{1}{p}\sum_{x\in\mathcal{Z}}\|x\|_1\leq\frac{k-\kappa\!\left(B(d,p-1)\cap\mathbb{P}^d_\circ\right)}{p/d}-2\mbox{.}
$$

As an immediate consequence, the integer $j$ defined as
\begin{equation}\label{PPP.sec.3.5.lem.2.eq.1}
j=\left\lfloor\frac{k-\kappa\!\left(B(d,p-1)\cap\mathbb{P}^d_\circ\right)}{p/d}\right\rfloor-\frac{1}{p}\sum_{x\in\mathcal{Z}}\|x\|_1
\end{equation}
is such that $2\leq{j}\leq{|\mathcal{Y}|-2}$. Assume that there exists a subset $\mathcal{S}$ of $j$ elements of $\mathcal{Y}$ that satisfies (\ref{PPP.sec.3.5.lem.2.eq.0.5}) and consider the set
$$
\mathcal{X}=\left[B(d,p-1)\cap\mathbb{P}^d_\circ\right]\cup\mathcal{Z}\cup\mathcal{S}\mbox{.}
$$

From there on, the argument is the same as in the proof of Lemma \ref{PPP.sec.3.5.lem.1}. According to Proposition \ref{PPP.sec.3.prop.1},
\begin{equation}\label{PPP.sec.3.5.lem.2.eq.2}
\kappa(\mathcal{X})=\kappa\!\left(B(d,p-1)\cap\mathbb{P}^d_\circ\right)+\frac{1}{d}\sum_{x\in\mathcal{Z}}\|x\|_1+\kappa(\mathcal{S})\mbox{.}
\end{equation}

Moreover, it follows from (\ref{PPP.sec.3.5.lem.2.eq.1}) that
\begin{equation}\label{PPP.sec.3.5.lem.2.eq.3}
\frac{1}{d}\sum_{x\in\mathcal{Z}}\|x\|_1\leq{k-\kappa\!\left(B(d,p-1)\cap\mathbb{P}^d_\circ\right)-\frac{jp}{d}}\mbox{.}
\end{equation}

Hence, we obtain $\kappa(\mathcal{X})\leq{k}$ by combining (\ref{PPP.sec.3.5.lem.2.eq.0.5}), (\ref{PPP.sec.3.5.lem.2.eq.2}), and (\ref{PPP.sec.3.5.lem.2.eq.3}). Now, since all the elements of $\mathcal{Z}$ have $1$-norm $p$, and since has $\mathcal{S}$ has cardinality $j$,
\begin{equation}\label{PPP.sec.3.5.lem.2.eq.4}
|\mathcal{X}|=\left|B(d,p-1)\cap\mathbb{P}^d_\circ\right|+\frac{1}{p}\sum_{x\in\mathcal{Z}}\|x\|_1+j\mbox{.}
\end{equation}

Combining (\ref{PPP.sec.3.5.lem.2.eq.1}) with (\ref{PPP.sec.3.5.lem.2.eq.4}) provides the equality
$$
|\mathcal{X}|=\left|B(d,p-1)\cap\mathbb{P}^d_\circ\right|+\left\lfloor\frac{k-\kappa\!\left(B(d,p-1)\cap\mathbb{P}^d_\circ\right)}{p/d}\right\rfloor\!\mbox{,}
$$
whose right-hand side is precisely $\lfloor\lambda(d,k)\rfloor$.
\end{proof}

We now build a subset of $\mathbb{P}^d_\circ$ that achieves, when $d$ is greater than $2$ and a divisor of $p$, the upper bound on $\delta_z(d,k)$ provided by Lemmas \ref{PPP.sec.3.lem.0.5} and \ref{PPP.sec.3.lem.1}.

\begin{lem}\label{PPP.sec.3.5.lem.3}
Assume that $d$ is greater than $2$ and a divisor of $p$. If
\begin{itemize}
\item[(i)] $k-\kappa\!\left(B(d,p-1)\cap\mathbb{P}^d_\circ\right)$ is distinct from $p/d$,
\item[(ii)] $\kappa\!\left(B(d,p)\cap\mathbb{P}^d_\circ\right)-k$ is distinct from $p/d$,
\end{itemize}
then there exists a subset $\mathcal{X}$ of $\mathbb{P}^d_\circ$ satisfying $\kappa(\mathcal{X})\leq{k}$ and $|\mathcal{X}|=\lfloor\lambda(d,k)\rfloor$. Otherwise, there also exists a subset $\mathcal{X}$ of $\mathbb{P}^d_\circ$ such that $\kappa(\mathcal{X})\leq{k}$, but whose cardinality is $\lfloor\lambda(d,k)\rfloor-1$ instead of $\lfloor\lambda(d,k)\rfloor$.

\end{lem}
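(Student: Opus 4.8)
The plan is to reduce Lemma~\ref{PPP.sec.3.5.lem.3} to the two constructions already in hand, namely Lemma~\ref{PPP.sec.3.5.lem.2} and the counting machinery behind it, by producing, for each relevant value of $k$, a suitable invariant set $\mathcal{Y}$ of points of $1$-norm $p$ together with the required subsets of prescribed $\kappa$-value. First I would fix the setup: since $d>2$ divides $p$, the ``diagonal'' point $x^{\ast}$ with all coordinates equal to $p/d$ is \emph{not} primitive, so the orbit-counting available in $S(d,p)\cap\mathbb{P}^d_\circ$ excludes it, but there remain plenty of primitive orbits of full size $d$. I would first dispose of the boundary values of $k$, i.e.\ those where $k-\kappa(B(d,p-1)\cap\mathbb{P}^d_\circ)$ is very small (smaller than $2p/d$) or where $\kappa(B(d,p)\cap\mathbb{P}^d_\circ)-k$ is very small, directly by an explicit small construction using the sets $\mathcal{S}_j$ from (\ref{PPP.sec.3.5.eq.1}) and a single appropriately chosen orbit. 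For the bulk range, where (\ref{PPP.sec.3.5.lem.2.eq.0}) holds, I would invoke Lemma~\ref{PPP.sec.3.5.lem.2}.

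The heart of the argument is therefore to verify the hypotheses of Lemma~\ref{PPP.sec.3.5.lem.2}: I must exhibit a $C$-invariant set $\mathcal{Y}$ of primitive points of $1$-norm $p$ with $|\mathcal{Y}|\geq d+3$ such that for every $j$ with $2\leq j\leq|\mathcal{Y}|-2$ there is an $\mathcal{S}\subseteq\mathcal{Y}$, $|\mathcal{S}|=j$, with $\kappa(\mathcal{S})=jp/d$. The natural way to guarantee $\kappa(\mathcal{S})=jp/d$ exactly is to make $\mathcal{S}$ itself $C$-invariant, since by Proposition~\ref{PPP.sec.3.prop.1} a $C$-invariant set of $1$-norm-$p$ points has $\kappa$ equal to $(1/d)\sum\|x\|_1=(|\mathcal{S}|/d)\,p$. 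Thus if I can arrange $\mathcal{Y}$ to be a union of several full $C$-orbits (each of size $d$), then taking $\mathcal{S}$ to be a union of full orbits realizes $\kappa(\mathcal{S})=jp/d$ whenever $j$ is a multiple of $d$; the remaining residues $j\not\equiv 0\pmod d$ are produced by adjoining a single non-invariant partial selection whose coordinate sums still balance to give $\kappa$ exactly $jp/d$. Concretely, within one orbit $C\cdot x$ one can choose prefixes $\{\tau^{i}(x):1\leq i\leq r\}$ whose $\kappa$ jumps by one as $r$ increases, so combining $\lfloor j/d\rfloor$ whole orbits with such a prefix of length $j\bmod d$ inside one further orbit hits every target $jp/d$ exactly, provided the coordinate multiset of $x$ distributes evenly---which is exactly what the balanced diagonal-adjacent point gives. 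I would pick $x$ with coordinates a permutation of values summing to $p$ that is primitive (e.g.\ $(p/d+1,p/d-1,p/d,\dots,p/d)$, primitive since $\gcd(p/d+1,p/d-1)\mid 2$ and one checks primitivity using the $\pm1$ entries), so that partial cyclic sums realize every integer in the needed range.

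The main obstacle I anticipate is the primitivity bookkeeping combined with the parity/divisibility constraints hidden in hypotheses (i) and (ii). The excluded values $k-\kappa(B(d,p-1)\cap\mathbb{P}^d_\circ)=p/d$ and $\kappa(B(d,p)\cap\mathbb{P}^d_\circ)-k=p/d$ are precisely the cases handled by Lemma~\ref{PPP.sec.3.lem.1}, where the upper bound drops by one; for those I would instead build a set of cardinality $\lfloor\lambda(d,k)\rfloor-1$ by the same orbit-union recipe but aiming one lower, which is unobstructed because the troublesome single leftover point of all-equal coordinates is never forced once we are one below the bound. The delicate point is ensuring $|\mathcal{Y}|\geq d+3$ and that enough \emph{primitive} full orbits of $1$-norm $p$ exist to supply all the required cardinalities; here I would lean on Theorem~\ref{PPP.sec.1.thm.1} and Proposition~\ref{PPP.sec.1.prop.1}, which guarantee $|S(d,p)\cap\mathbb{P}^d_\circ|$ grows without the diagonal obstruction as soon as $d>2$, so that for $d\geq 3$ the supply of primitive orbits is comfortably large in the bulk regime. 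Finally I would assemble the cases: the bulk range via Lemma~\ref{PPP.sec.3.5.lem.2}, the two near-boundary regimes by direct small constructions, and the two exceptional equalities by the ``minus one'' construction, concluding that in every case a set of the stated cardinality with $\kappa\leq k$ exists.
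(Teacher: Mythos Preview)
Your overall architecture is the same as the paper's: handle the bulk range (\ref{PPP.sec.3.5.lem.2.eq.0}) via Lemma~\ref{PPP.sec.3.5.lem.2}, and treat the near-boundary values of $k$ and the two exceptional equalities by small explicit constructions. That part is fine.

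The gap is in how you propose to meet the hypothesis of Lemma~\ref{PPP.sec.3.5.lem.2}. You need, for each $j$ with $2\leq j\leq|\mathcal{Y}|-2$, a subset $\mathcal{S}\subset\mathcal{Y}$ of size $j$ with $\kappa(\mathcal{S})=jp/d$ \emph{exactly}. Since every point of $\mathcal{Y}$ has $1$-norm $p$, the average over $i$ of $\sum_{x\in\mathcal{S}}|x_i|$ is $jp/d$; hence $\kappa(\mathcal{S})=jp/d$ forces \emph{all} coordinate sums to equal $jp/d$. Your suggestion is to take $\lfloor j/d\rfloor$ full orbits together with a prefix $\{\tau^i(x):1\leq i\leq r\}$, $r=j\bmod d$, of one further orbit of the near-diagonal point $x=(p/d+1,p/d-1,p/d,\dots,p/d)$. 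But a proper prefix of $C\cdot x$ is \emph{not} balanced: for $1\leq r\leq d-2$ one computes, for instance, that the second-coordinate sum of $\{\tau(x),\dots,\tau^r(x)\}$ equals $rp/d+1$, so $\kappa$ of your $\mathcal{S}$ is $jp/d+1$, not $jp/d$. Adding full orbits does not repair this, since they contribute equally to every coordinate. Thus Lemma~\ref{PPP.sec.3.5.lem.2} cannot be invoked on your $\mathcal{Y}$.

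What the paper does instead is the missing idea: it uses not one but three near-diagonal points $a$, $b$, $c$, where $b$ is obtained from $a=(p/d-1,p/d+1,p/d,\dots,p/d)$ by swapping the first two coordinates, so that $a+b$ is the constant vector $(2p/d,\dots,2p/d)$. Then for even $j$ the paired set $\{\tau^i(a),\tau^i(b):1\leq i\leq j/2\}$ has every coordinate sum equal to $jp/d$ on the nose. Odd $j$ is handled by bringing in the third point $c$ (a further coordinate transposition of $b$) so that a suitable combination of shifts of $a$, $b$, and $c$ again sums to a constant vector; the case $d=3$ needs a separate one-line check. Values $j>|\mathcal{Y}|/2$ are obtained by complementation in $\mathcal{Y}$. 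Once you have this pairing trick, the rest of your outline goes through; but without it the verification of (\ref{PPP.sec.3.5.lem.2.eq.0.5}) fails for every $j$ not divisible by $d$.
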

\begin{proof}
Assume that $d$ is a divisor of $p$. Consider the point $a$ of $\mathbb{Z}^d$ whose first coordinate is $p/d-1$, whose second coordinate is $p/d+1$, and whose all other coordinates are $p/d$. Further consider the point $b$ obtained from $a$ by exchanging the first two coordinates and the point $c$ obtained from $b$ by exchanging the second and third coordinates. As $d$ is greater than $2$, the three points $a$, $b$ and $c$ each admit two coordinates that are consecutive integers. Therefore, they are primitive. Note, in addition, that they have $1$-norm $p$. Consider the set
$$
\mathcal{Y}=(C\mathord{\cdot}a)\cup(C\mathord{\cdot}b)\cup(C\mathord{\cdot}c)\mbox{.}
$$

We are going to show that $\mathcal{Y}$ satisfies the requirements of Lemma \ref{PPP.sec.3.5.lem.2}. Observe that the orbits of $a$, $b$, and $c$ under the action of $C$ on $\mathbb{R}^d$ have cardinality $d$. Further observe that these orbits are pairwise disjoint if $d$ is greater than $3$. Therefore, in this case, $\mathcal{Y}$ has cardinality $3d$. If, however, $d$ is equal to $3$, then $\mathcal{Y}$ has cardinality only $6$ because  $C\mathord{\cdot}a$ coincides with $C\mathord{\cdot}b$. Further observe that all the points contained in $\mathcal{Y}$ have non-negative coordinates. Now, consider an integer $j$ such that $2\leq{j}\leq|\mathcal{Y}|-2$. If $j$ is even and at most $|\mathcal{Y}|/2$, denote
$$
\mathcal{S}_j=\left\{\tau^i(a):1\leq{i}\leq\frac{j}{2}\right\}\!\cup\!\left\{\tau^i(b):1\leq{i}\leq\frac{j}{2}\right\}\!\mbox{.}
$$

By construction, $\mathcal{S}_j$ is a subset of $j$ elements of $\mathcal{Y}$. Observe that $\mathcal{Y}$ is a subset of $[0,+\infty[^d$ and, therefore, so is $\mathcal{S}_j$. Further observe that all the coordinates of $a+b$ are equal to $2p/d$. Hence, all the coordinates of the sum of the points contained in $\mathcal{S}_j$ are equal to $jp/d$. As a consequence,
\begin{equation}\label{PPP.sec.3.5.lem.3.eq.1}
\kappa(\mathcal{S}_j)=\frac{jp}{d}\mbox{,}
\end{equation}

Now assume that $j$ is odd and at most $|\mathcal{Y}|/2$. We will distinguish two cases depending on whether $d$ is equal to $3$ or greater than $3$. If $d$ is equal to $3$, then $j$ must be equal to $3$, as this is the only odd number that is both greater than~$2$ and at most $|\mathcal{Y}|/2$. In this case, we denote
$$
\mathcal{S}_j=\left\{a,\tau(a),\tau^2(a)\right\}\mbox{,}
$$
and if $d$ is greater than $3$, we denote
$$
\mathcal{S}_j=\left\{\tau^i(a):1\leq{i}\leq\frac{j+1}{2}\right\}\!\cup\!\left\{\tau^i(b):1\leq{i}\leq\frac{j-3}{2}\right\}\!\cup\!\left\{\tau^{(j-1)/2}(c)\right\}\mbox{.}
$$

Again, $\mathcal{S}_j$ is a subset of $j$ elements of $\mathcal{Y}$. Moreover, all the coordinates of the sum of its elements are equal to $jp/d$. Hence $\mathcal{S}_j$ satisfies~(\ref{PPP.sec.3.5.lem.3.eq.1}).

In the case when $j$ is greater than $|\mathcal{Y}|/2$, denote
$$
\mathcal{S}_j=\mathcal{Y}\mathord{\setminus}\mathcal{S}_{|\mathcal{Y}|-j}\mbox{.}
$$

By construction, all the coordinates of the sum of the points contained in $\mathcal{Y}$ are equal to $|\mathcal{Y}|p/d$ and all the coordinates of the sum of the points contained in $\mathcal{S}_{|\mathcal{Y}|-j}$ to $(|\mathcal{Y}|-j)p/d$. Hence, $\mathcal{S}_j$ satisfies (\ref{PPP.sec.3.5.lem.3.eq.1}) in this case as well. As a consequence, when (\ref{PPP.sec.3.5.lem.2.eq.0}) holds, it follows from Lemma \ref{PPP.sec.3.5.lem.2} that there exists a subset of $\mathbb{P}^d_\circ$ of the desired cardinality, whose image by $\kappa$ is at most $k$.

In the remainder of the proof, we assume that (\ref{PPP.sec.3.5.lem.2.eq.0}) does not hold, and we review several cases depending on the values of $p/d$ and $k$. In each of this cases, we exhibit a subset of $\mathbb{P}^d_\circ$ with the desired properties. Recall that
$$
\lambda(d,k)=\left|B(d,p-1)\cap\mathbb{P}^d_\circ\right|+\frac{k-\kappa\!\left(B(d,p-1)\cap\mathbb{P}^d_\circ\right)}{p/d}\mbox{.}
$$

Observe that, when the difference $k-\kappa\!\left(B(d,p-1)\cap\mathbb{P}^d_\circ\right)$ is less than $p/d$ or, when that difference is exactly $p/d$ with $p>d$, then $B(d,p-1)\cap\mathcal{P}^d_\circ$ has the desired properties. Moreover, if $p$ is equal to $d$ and
$$
\frac{p}{d}\leq{k-\kappa\!\left(B(d,p-1)\cap\mathbb{P}^d_\circ\right)}<\frac{2p}{d}\mbox{,}
$$
then the union of $B(d,p-1)\cap\mathbb{P}^d_\circ$ with the singleton that contains the point of $\mathbb{R}^d$ whose all coordinates are equal to $1$ has the desired cardinality, and its image by $\kappa$ is at most $k$. Similarly, if $p$ is greater than $d$ and
$$
\frac{p}{d}<{k-\kappa\!\left(B(d,p-1)\cap\mathbb{P}^d_\circ\right)}<\frac{2p}{d}\mbox{,}
$$
then the set $\left(B(d,p-1)\cap\mathbb{P}^d_\circ\right)\cup\{a\}$, where $a$ is the point defined at the beginning of the proof is a subset of $\mathbb{P}^d_\circ$ with the desired properties.

Now, if $p$ is equal to $d$ and $\kappa\!\left(B(d,p)\cap\mathbb{P}^d_\circ\right)-k$ is at most $p/d$, then the set obtained from $B(d,p)\cap\mathbb{P}^d_\circ$ by removing the point whose all coordinates are equal to $1$ has the desired cardinality and its image by $\kappa$ is at most $k$. Similarly, if $p>d$ and $\kappa\!\left(B(d,p)\cap\mathbb{P}^d_\circ\right)-k$ is less than $p/d$, then removing from $B(d,p)\cap\mathbb{P}^d_\circ$ the point $a$ results in a subset of $\mathbb{P}^d_\circ$ of the right cardinality and image by $\kappa$. Finally, assume that $p$ is greater than $d$ and that $\kappa\!\left(B(d,p)\cap\mathbb{P}^d_\circ\right)-k$ is equal to $p/d$. In that case, $p/d$ is a divisor of $k-\kappa\!\left(B(d,p-1)\cap\mathbb{P}^d_\circ\right)$. Hence,
$$
\left\lfloor\frac{k-\kappa\!\left(B(d,p-1)\cap\mathbb{P}^d_\circ\right)}{p/d}\right\rfloor\!-1=\left\lfloor\frac{k-1-\kappa\!\left(B(d,p-1)\cap\mathbb{P}^d_\circ\right)}{p/d}\right\rfloor\!\mbox{.}
$$

Note in particular that $k-1-\kappa\!\left(B(d,p-1)\cap\mathbb{P}^d_\circ\right)$ cannot be a multiple of $p/d$. Therefore the set that we have already built, of cardinality
$$
\left|B(d,p-1)\cap\mathbb{P}^d_\circ\right|+\left\lfloor\frac{k-1-\kappa\!\left(B(d,p-1)\cap\mathbb{P}^d_\circ\right)}{p/d}\right\rfloor\!
$$
whose image by $\kappa$ is at most $k-1$ has the desired properties.
\end{proof}

Assume that $d$ is greater than $2$ and consider the set
$$
\mathbb{E}=\bigcup\left\{\kappa\!\left(B(d,p-1)\cap\mathbb{P}^d_\circ\right)+\frac{p}{d},\kappa\!\left(B(d,p)\cap\mathbb{P}^d_\circ\right)-\frac{p}{d}\right\}\mbox{,}
$$
where the union is over the positive integers $p$ that admit $d$ as a proper divisor. Note in particular that $\mathbb{E}$ depends on the dimension. With this notation Theorem \ref{PPP.sec.0.thm.2} is a consequence, for all dimensions above $2$, of Lemmas \ref{PPP.sec.3.lem.1}, \ref{PPP.sec.3.5.lem.1}, and \ref{PPP.sec.3.5.lem.3}, except for the part on the density of $\mathbb{E}$ within $\mathbb{N}$.

The following result completes the proof of Theorem \ref{PPP.sec.0.thm.2} when $d>2$.

\begin{lem}\label{PPP.sec.3.5.lem.4}
If $d>2$, then $\displaystyle\lim_{k\rightarrow\infty}\frac{\left|\mathbb{E}\cap[1,k]\right|}{k^{1/(d+1)}}=\frac{\left[4(d+1)!\zeta(d)\right]^{1/(d+1)}}{d}$.
\end{lem}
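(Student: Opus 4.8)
The plan is to reduce the counting of $\mathbb{E}$ to the growth rate of $\kappa_p:=\kappa\!\left(B(d,p)\cap\mathbb{P}^d_\circ\right)$ and then to invert it. First I would show that $\kappa_p\sim\frac{2^{d-1}}{(d+1)!\,\zeta(d)}\,p^{d+1}$ as $p\to\infty$. By Proposition~\ref{PPP.sec.1.prop.1}(ii), $\kappa_p=\frac{1}{2d}\sum_{j=1}^d2^j\binom{d}{j}\sum_{i=j}^p i\,c_\psi(i,j)$, and I expect the summand $j=d$ to dominate. Indeed, Theorem~\ref{PPP.sec.1.thm.1} together with $s(j,j)=1$ shows that the leading contribution to $c_\psi(i,j)$ is $J_{j-1}(i)/(j-1)!$, so $c_\psi(i,j)=O(i^{j-1})$ and $\sum_{i\leq p}i\,c_\psi(i,j)=O(p^{j+1})$; every term with $j<d$ therefore contributes only $O(p^d)$, which is negligible against the $j=d$ term. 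The same estimate discards all non-leading Stirling terms inside $c_\psi(i,d)$, so it suffices to evaluate $\kappa_p\sim\frac{2^{d-1}}{d\,(d-1)!}\sum_{i\leq p}i\,J_{d-1}(i)$.

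The number-theoretic core is the estimate $\sum_{i\leq p}i\,J_{d-1}(i)\sim\frac{p^{d+1}}{(d+1)\,\zeta(d)}$. I would derive it from the convolution $J_{d-1}(n)=\sum_{e\mid n}\mu(e)(n/e)^{d-1}$: writing $i=em$ turns $i\,J_{d-1}(i)$ into $e\,m^{d}$, whence
$$\sum_{i\leq p}i\,J_{d-1}(i)=\sum_{e\leq p}\mu(e)\,e\sum_{m\leq p/e}m^{d}\sim\frac{p^{d+1}}{d+1}\sum_{e\leq p}\frac{\mu(e)}{e^{d}}\sim\frac{p^{d+1}}{(d+1)\,\zeta(d)}\mbox{,}$$
the last step using $\sum_{e\geq1}\mu(e)/e^{d}=1/\zeta(d)$, which converges since $d\geq3$. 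Substituting back yields $\kappa_p\sim\frac{2^{d-1}}{d\,(d-1)!\,(d+1)\,\zeta(d)}p^{d+1}=\frac{2^{d-1}}{(d+1)!\,\zeta(d)}p^{d+1}$, as claimed; I write $C$ for this leading constant.

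It remains to count. The set $\mathbb{E}$ is the union, over the proper multiples $p=md$ with $m\geq2$ of $d$, of the two numbers $\kappa_{p-1}+p/d$ and $\kappa_p-p/d$. Both corrections $\pm p/d$ are $O(p)=o(\kappa_p)$ and $\kappa_{p-1}\sim\kappa_p$, so each of these numbers is asymptotic to $\kappa_p\sim Cp^{d+1}$. For large $p$ the chain $\kappa_{p-1}+p/d<\kappa_p-p/d<\kappa_{p+d-1}+(p+d)/d$—whose left inequality follows from $\kappa_p-\kappa_{p-1}=p\!\left|S(d,p)\cap\mathbb{P}^d_\circ\right|/d$ (Proposition~\ref{PPP.sec.1.prop.1}) exceeding $2p/d$—shows that these elements are distinct and increasing, so $\left|\mathbb{E}\cap[1,k]\right|$ is, to leading order, twice the number of proper multiples $p$ of $d$ for which $\kappa_p\leq k$. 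Inverting $\kappa_p\sim Cp^{d+1}$ gives $\kappa_p\leq k$ for $p$ up to $(k/C)^{1/(d+1)}$, and the number of multiples of $d$ in that range is $\sim\frac{1}{d}(k/C)^{1/(d+1)}$. Hence $\left|\mathbb{E}\cap[1,k]\right|\sim\frac{2}{d}(k/C)^{1/(d+1)}$, and dividing by $k^{1/(d+1)}$ with $C=2^{d-1}/\!\left((d+1)!\,\zeta(d)\right)$ gives $\frac{2}{d}C^{-1/(d+1)}=\frac{2^{2/(d+1)}}{d}\!\left[(d+1)!\,\zeta(d)\right]^{1/(d+1)}=\frac{\left[4(d+1)!\,\zeta(d)\right]^{1/(d+1)}}{d}$, the stated limit.

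The main obstacle is the first asymptotic, where two hierarchies of error terms must be shown to be of genuinely lower order: the summands $j<d$ in Proposition~\ref{PPP.sec.1.prop.1}(ii), and the non-leading Stirling coefficients inside $c_\psi(i,d)$, both bounded by $O(p^d\log p)=o(p^{d+1})$ via the standard M\"{o}bius-summation error estimate for $\sum_{i\leq p}i\,J_{d-1}(i)$. Once $\kappa_p\sim Cp^{d+1}$ is in hand the counting is routine, since the $\pm p/d$ corrections and the gap between $\kappa_{p-1}$ and $\kappa_p$ are swamped by the $p^{d+1}$ growth, so that only $C$ survives in the limit. The hypothesis $d>2$ enters solely to guarantee the convergence of $\zeta(d)$ and the product form of $\mathbb{E}$ used here.
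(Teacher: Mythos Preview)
Your argument is correct and follows the same route as the paper: both reduce the count of $\mathbb{E}\cap[1,k]$ to roughly $2p/d$ where $p$ is determined by $k\approx\kappa_p$, and both finish via the asymptotic $\kappa_p\sim\frac{2^{d-1}}{(d+1)!\,\zeta(d)}\,p^{d+1}$. The only substantive difference is that the paper quotes this asymptotic from Theorem~4.2 of \cite{DezaPourninSukegawa2020} and then sandwiches $\left|\mathbb{E}\cap[1,k]\right|/k^{1/(d+1)}$ between $(2p/d-4)/\kappa_p^{1/(d+1)}$ and $(2p/d)/\kappa_{p-1}^{1/(d+1)}$, whereas you re-derive the asymptotic from Proposition~\ref{PPP.sec.1.prop.1} and the M\"obius--Jordan identity; your version is therefore more self-contained, at the cost of reproving a result already available in the literature. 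One small slip: $\zeta(d)$ already converges for $d=2$, so the genuine role of the hypothesis $d>2$ is only the second reason you give, namely that the description of $\mathbb{E}$ as a union of pairs indexed by proper multiples of $d$ is specific to $d>2$.
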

\begin{proof}
Consider the smallest integer $p$ such that $k<\kappa\!\left(B(d,p)\cap\mathbb{P}^d_\circ\right)$ and observe that the cardinality of $\mathbb{E}\cap[1,k]$ is at most twice the number of multiples of $d$ less than or equal to $p$ and at least that number minus $4$. Therefore,
$$
\frac{2p/d-4}{\!\left[\kappa\!\left(B(d,p)\cap\mathbb{P}^d_\circ\right)\right]^{1/(d+1)}}\leq\frac{\left|\mathbb{E}\cap[1,k]\right|}{k^{1/(d+1)}}\leq\frac{2p/d}{\!\left[\kappa\!\left(B(d,p-1)\cap\mathbb{P}^d_\circ\right)\right]^{1/(d+1)}}\mbox{.}
$$

However, according to Theorem 4.2 from \cite{DezaPourninSukegawa2020},
$$
\lim_{p\rightarrow\infty}\frac{\left[\kappa\!\left(B(d,p)\cap\mathbb{P}^d_\circ\right)\right]^{1/(d+1)}}{p}=\left(\frac{2^{d-1}}{(d+1)!\zeta(d)}\right)^{1/(d+1)}\mbox{,}
$$
and the desired result follows.
\end{proof}

Note that the asymptotic estimate of the density of $\mathbb{E}$ within $\mathbb{N}$ provided by Lemma~\ref{PPP.sec.3.5.lem.4} is exact. This estimate is two orders lower than the upper bound in the statement of Theorem \ref{PPP.sec.0.thm.2}.
Indeed, we shall see in the next section that $\mathbb{E}$ is more dense in $2$-dimensions. However, if the statement of Theorem \ref{PPP.sec.0.thm.2} is restricted to dimensions above $2$, the upper bound on the density of $\mathbb{E}$ can be replaced by the exact estimate from Lemma~\ref{PPP.sec.3.5.lem.4}.

\section{An expression for $\delta_z(2,k)$}\label{PPP.sec.4}

This section is devoted to proving the $2$-dimensional case of Theorem \ref{PPP.sec.0.thm.2}. We will assume throughout the section that $d$ is equal to $2$, that $k$ is fixed, and that $p$ denotes the smallest integer such that $k$ is less than $\kappa\!\left(B(2,p)\cap\mathbb{P}^d_\circ\right)$. Observe that, when $p$ is odd, the value of $\delta_z(2,k)$ is provided by Lemmas~\ref{PPP.sec.3.lem.0.5} and~\ref{PPP.sec.3.5.lem.1}. Therefore, we only have to treat the case when $p$ is even. Moreover, note that Lemmas \ref{PPP.sec.3.lem.0.5}, \ref{PPP.sec.3.lem.1}, and \ref{PPP.sec.3.lem.2} provide the desired upper bound on $\delta_z(2,k)$ for all $k$. Hence, we only need to build subsets of $\mathbb{P}^2_\circ$ that achieve that bound for even $p$. We first consider the case when $p$ is a multiple of $4$.

\begin{lem}\label{PPP.sec.4.lem.1}
Assume that $p$ is a multiple of $4$. If $k-\kappa\!\left(B(2,p-1)\cap\mathbb{P}^2_\circ\right)$ is not a multiple of $p/2$ by an odd integer, then there exists a subset $\mathcal{X}$ of $\mathbb{P}^2_\circ$ satisfying $\kappa(\mathcal{X})\leq{k}$ and $|\mathcal{X}|=\lfloor\lambda(2,k)\rfloor$. Otherwise, there also exists such a subset of $\mathbb{P}^2_\circ$, but whose cardinality is $\lfloor\lambda(2,k)\rfloor-1$ instead of $\lfloor\lambda(2,k)\rfloor$.
\end{lem}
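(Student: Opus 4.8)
The plan is to obtain $\mathcal{X}$ by adjoining to $B(2,p-1)\cap\mathbb{P}^2_\circ$ a carefully chosen set $\mathcal{S}$ of points of $1$-norm $p$ from $\mathbb{P}^2_\circ$. Write $m=k-\kappa\!\left(B(2,p-1)\cap\mathbb{P}^2_\circ\right)$ and $N=\lfloor2m/p\rfloor$, so that, by the definition of $\lambda$, the target cardinality is $\lfloor\lambda(2,k)\rfloor=\left|B(2,p-1)\cap\mathbb{P}^2_\circ\right|+N$. Recall from the discussion following Proposition \ref{PPP.sec.1.prop.1} that $B(2,p-1)\cap\mathbb{P}^2_\circ$ is invariant under $C$ and that $\kappa\!\left(B(2,p)\cap\mathbb{P}^2_\circ\right)-\kappa\!\left(B(2,p-1)\cap\mathbb{P}^2_\circ\right)=p\!\left|S(2,p)\cap\mathbb{P}^2_\circ\right|/2$; since $p$ is the smallest integer with $k<\kappa\!\left(B(2,p)\cap\mathbb{P}^2_\circ\right)$, this gives $N<\left|S(2,p)\cap\mathbb{P}^2_\circ\right|$, so there are always enough points of $1$-norm $p$ to work with. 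As $\mathcal{S}$ consists of points of $1$-norm $p$, it is disjoint from $B(2,p-1)$, and since the two coordinate sums of the $C$-invariant set $B(2,p-1)\cap\mathbb{P}^2_\circ$ are both equal to $\kappa\!\left(B(2,p-1)\cap\mathbb{P}^2_\circ\right)$ by Proposition \ref{PPP.sec.3.prop.1}, one has $\kappa(\mathcal{X})=\kappa\!\left(B(2,p-1)\cap\mathbb{P}^2_\circ\right)+\max(s_1,s_2)$, where $s_i=\sum_{x\in\mathcal{S}}|x_i|$. Because every point of $\mathcal{S}$ has $1$-norm $p$, we also have $s_1+s_2=p|\mathcal{S}|$. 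The whole construction thus reduces to controlling the imbalance $s_1-s_2$.

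The key arithmetic input is a parity obstruction. Since $p$ is even, primitivity forces both coordinates of any point of $1$-norm $p$ in $\mathbb{P}^2_\circ$ to be odd; and since $p$ is a multiple of $4$, the integer $p/2$ is even, so $|x_1|-p/2$ is odd for every such point. Writing $s_1-s_2=2\sum_{x\in\mathcal{S}}(|x_1|-p/2)$, the imbalance of a set of $N$ such points is therefore congruent to $2N$ modulo $4$; in particular $s_1=s_2$ is possible only when $N$ is even. On the other hand, each orbit $C\mathord{\cdot}x$ of a point of $1$-norm $p$ has cardinality exactly $2$ (a fixed point would require $|x_1|=|x_2|=p/2$, which is not primitive for $p\geq4$), and it contributes $(p,p)$ to $(s_1,s_2)$, hence is perfectly balanced.

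I would then split into cases according to the parity of $N$ and whether $m$ is a multiple of $p/2$. When $N$ is even, take for $\mathcal{S}$ a union of $N/2$ orbits; this is balanced, so $\max(s_1,s_2)=Np/2\leq m$ and $|\mathcal{X}|=\lfloor\lambda(2,k)\rfloor$. When $N$ is odd but $m$ is not a multiple of $p/2$, write $m=Np/2+\rho$ with $\rho\geq1$, and take $\mathcal{S}$ to be a union of $(N-1)/2$ orbits together with the single point $(p/2+1,\,p/2-1)$, whose coordinates differ by $2$ and are both odd, hence coprime, so that it is a primitive point of $\mathbb{P}^2_\circ$ of $1$-norm $p$. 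This realizes the minimal imbalance $|s_1-s_2|=2$, giving $\max(s_1,s_2)=Np/2+1\leq m$ and again $|\mathcal{X}|=\lfloor\lambda(2,k)\rfloor$. Finally, when $N$ is odd and $m=Np/2$, that is, when $m$ is an odd multiple of $p/2$, the parity obstruction shows that \emph{any} $N$ points of $1$-norm $p$ satisfy $\max(s_1,s_2)\geq Np/2+1>m$, so the value $\lfloor\lambda(2,k)\rfloor$ is out of reach (consistent with Lemma \ref{PPP.sec.3.lem.2}); instead I use $(N-1)/2$ balanced orbits, which yields $\kappa(\mathcal{X})=\kappa\!\left(B(2,p-1)\cap\mathbb{P}^2_\circ\right)+(N-1)p/2<k$ and $|\mathcal{X}|=\lfloor\lambda(2,k)\rfloor-1$, as claimed.

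The main obstacle, and the reason the $2$-dimensional multiple-of-$4$ case is genuinely different, is precisely this parity obstruction: it is what prevents odd $N$ from being balanced and so forces the loss of one point in the excluded case. Everything else is routine bookkeeping: checking that the required orbit counts $N/2$ and $(N-1)/2$ never exceed the number $\left|S(2,p)\cap\mathbb{P}^2_\circ\right|/2$ of available orbits, which follows from $N<\left|S(2,p)\cap\mathbb{P}^2_\circ\right|$, and verifying in the near-balanced case that the $(N-1)/2$ chosen orbits can avoid the orbit of $(p/2+1,\,p/2-1)$, which is possible since only one more orbit than that is ever needed.
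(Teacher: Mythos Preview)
Your proof is correct and follows essentially the same approach as the paper: both construct $\mathcal{X}$ as $B(2,p-1)\cap\mathbb{P}^2_\circ$ together with a union of $C$-orbits of points of $1$-norm~$p$, supplemented when necessary by the single near-balanced primitive point with coordinates $p/2\pm1$. The only cosmetic difference is that the paper organizes the case analysis through the continuous parameter $g=2m/p-|\mathcal{Z}|$, whereas you split directly on the parity of $N=\lfloor 2m/p\rfloor$ and make the parity obstruction (that $s_1-s_2\equiv 2N\pmod 4$) explicit; the underlying constructions are identical.
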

\begin{proof}
Consider the point $a$ such that $a_1=p/2-1$ and $a_2=p/2+1$. Note that, since $p/2$ is even, $a$ is primitive. Moreover, $a$ has $1$-norm $p$.

Observe that the set of the points of $1$-norm $p$ from $\mathbb{P}^2_\circ$ is partitioned by the orbits of its elements under the action of $C$ on $\mathbb{R}^2$. Moreover, since $p$ is greater than $2$ the orbit of any of these elements under the action of $C$ on $\mathbb{R}^2$ has cardinality exactly $2$. Therefore, one can take the union of some of these orbits in order to build a set $\mathcal{Z}$ of points of $1$-norm $p$ from $\mathbb{P}^2_\circ$ that is invariant under the action of $C$ on $\mathbb{R}^2$, that is disjoint from $C\mathord{\cdot}a$ and that satisfies
\begin{equation}\label{PPP.sec.4.lem.1.eq.1}
\frac{k-\kappa\!\left(B(2,p-1)\cap\mathbb{P}^2_\circ\right)}{p/2}-2<\frac{1}{p}\sum_{x\in\mathcal{Z}}\|x\|_1\leq\frac{k-\kappa\!\left(B(2,p-1)\cap\mathbb{P}^2_\circ\right)}{p/2}\mbox{.}
\end{equation}

Consider the difference
\begin{equation}\label{PPP.sec.4.lem.1.eq.2}
g=\frac{k-\kappa\!\left(B(2,p-1)\cap\mathbb{P}^2_\circ\right)}{p/2}-\frac{1}{p}\sum_{x\in\mathcal{Z}}\|x\|_1\mbox{.}
\end{equation}

It follows from (\ref{PPP.sec.4.lem.1.eq.1}) that $0\leq{g}<2$. If $g\leq1$, consider the set
$$
\mathcal{X}=\left(B(2,p-1)\cap\mathbb{P}^2_\circ\right)\cup\mathcal{Z}\mbox{.}
$$

By construction, $\mathcal{X}$ is invariant under the action of $C$ on $\mathbb{R}^2$. In addition, as the elements of $\mathcal{Z}$ all have $1$-norm $p$, it follows from Proposition \ref{PPP.sec.3.prop.1} that
\begin{equation}\label{PPP.sec.4.lem.1.eq.3}
|\mathcal{X}|=\kappa\!\left(B(2,p-1)\cap\mathbb{P}^2_\circ\right)+\frac{1}{2}\sum_{x\in\mathcal{Z}}\|x\|_1\mbox{.}
\end{equation}

However, as $g\geq0$, it follows from (\ref{PPP.sec.4.lem.1.eq.2}) that
$$
\frac{1}{2}\sum_{x\in\mathcal{Z}}\|x\|_1\leq{k-\kappa\!\left(B(2,p-1)\cap\mathbb{P}^2_\circ\right)}\mbox{.}
$$

Combining this with (\ref{PPP.sec.4.lem.1.eq.3}) shows that $\kappa(Z)\leq{k}$. Now recall that
$$
\lambda(2,k)=\left|B(2,p-1)\cap\mathbb{P}^2_\circ\right|+\frac{k-\kappa\!\left(B(2,p-1)\cap\mathbb{P}^2_\circ\right)}{p/2}\mbox{.}
$$

Since all the elements of $\mathcal{Z}$ all have $1$-norm $p$,
\begin{equation}\label{PPP.sec.4.lem.1.eq.3.5}
|\mathcal{X}|=\left|B(2,p-1)\cap\mathbb{P}^2_\circ\right|+\frac{1}{p}\sum_{x\in\mathcal{Z}}\|x\|_1\mbox{.}
\end{equation}

As $g$ is non-negative, combining (\ref{PPP.sec.4.lem.1.eq.2}) with (\ref{PPP.sec.4.lem.1.eq.3.5}) shows that $|\mathcal{X}|=\lfloor\lambda(d,k)\rfloor$. Moreover, when $g$ is equal to $1$, we obtain that $\mathcal{X}$ has cardinality $\lfloor\lambda(d,k)\rfloor-1$. As $g$ is equal to $1$ if and only if $k-\kappa\!\left(B(2,p-1)\cap\mathbb{P}^2_\circ\right)$ is an odd multiple of $p/2$, the lemma is proven in this case.

Now assume that $g$ is greater than $1$ and denote
$$
\mathcal{X}=\left(B(2,p-1)\cap\mathbb{P}^2_\circ\right)\cup\mathcal{Z}\cup\{a\}\mbox{,}
$$

According to Proposition \ref{PPP.sec.3.prop.1},
\begin{equation}\label{PPP.sec.4.lem.1.eq.4}
\kappa(\mathcal{X})=\kappa\!\left(B(2,p-1)\cap\mathbb{P}^2_\circ\right)+\frac{1}{2}\sum_{x\in\mathcal{Z}}\|x\|_1+\frac{p}{2}+1\mbox{.}
\end{equation}

As $g$ is greater than $1$, (\ref{PPP.sec.4.lem.1.eq.2}) yields
$$
\frac{1}{2}\sum_{x\in\mathcal{X}}\|x\|_1<k-\kappa\!\left(B(2,p-1)\cap\mathbb{P}^2_\circ\right)-\frac{p}{2}\mbox{.}
$$

Since this inequality is strict, combining it with (\ref{PPP.sec.4.lem.1.eq.4}) shows that $\kappa(Z)\leq{k}$. Finally, as all the elements of $\mathcal{Z}$ have $1$-norm $p$,
$$
|\mathcal{X}|=\left|B(2,p-1)\cap\mathbb{P}^2_\circ\right|+\frac{1}{p}\sum_{x\in\mathcal{Z}}\|x\|_1+1\mbox{.}
$$

As $g$ greater than $1$, combining this with (\ref{PPP.sec.4.lem.1.eq.2}) shows that $|\mathcal{X}|=\lfloor\lambda(d,k)\rfloor$.
\end{proof}

Let us now treat the case when $p$ is even and $p/2$ is odd. The sub-cases when
$$
k\in\left\{\kappa\!\left(B(2,p-1)\cap\mathbb{P}^2_\circ\right)+\frac{p}{2}+1,\kappa\!\left(B(2,p-1)\cap\mathbb{P}^2_\circ\right)-\frac{p}{2}+1\right\}
$$
turn out to be particularly interesting. In these two sub-cases, the subsets $\mathcal{X}$ of $\mathbb{P}^2_\circ$ such that $|\mathcal{X}|=\delta_z(2,k)$ and $\kappa(\mathcal{X})\leq{k}$ cannot be between $B(2,p-1)\cap\mathbb{P}^2_\circ$ and $B(2,p)\cap\mathbb{P}^2_\circ$ in terms of inclusion. Let us begin with the case when $k$ is equal to $\kappa\!\left(B(2,p-1)\cap\mathbb{P}^2_\circ\right)+p/2+1$.

\begin{prop}\label{PPP.sec.4.prop.1}
If $p$ is even, $p/2$ is odd, and $p>2$, then there exists a subset $\mathcal{X}$ of $\mathbb{P}^2_\circ$ of cardinality $\left|B(2,p-1)\cap\mathbb{P}^2_\circ\right|+1$ such that
$$
\kappa(\mathcal{X})\leq\kappa\!\left(B(2,p-1)\cap\mathbb{P}^2_\circ\right)+\frac{p}{2}+1\mbox{.}
$$

In addition, for any such subset $\mathcal{X}$ of $\mathbb{P}^2_\circ$, either
\begin{itemize}
\item[(i)] some point in $B(2,p-1)\cap\mathbb{P}^2_\circ$ is not contained in $\mathcal{X}$, or
\item[(ii)] some point in $\mathcal{X}$ is not contained in $B(2,p)\cap\mathbb{P}^2_\circ$.
\end{itemize}
\end{prop}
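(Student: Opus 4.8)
The plan is to prove the existence assertion by an explicit construction and the dichotomy by a short parity argument; the hypothesis that $p/2$ is odd is what makes both parts work.

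For the existence part, I would start from $B(2,p-1)\cap\mathbb{P}^2_\circ$, which has the right cardinality minus one and which, being invariant under the action of $C$, satisfies $\sum|x_1|=\sum|x_2|=\kappa(B(2,p-1)\cap\mathbb{P}^2_\circ)$ by Proposition~\ref{PPP.sec.3.prop.1}. I then trade one of its points for two points of $1$-norm $p$. Concretely, remove $w=(p/2-1,p/2)$ and add $u=(p/2+2,p/2-2)$ together with $v=(p/2-2,p/2+2)$. All three points are primitive exactly because $p/2$ is odd: the coordinates of $w$ are consecutive, while $u$ and $v$ have two odd coordinates whose difference is $4$, so their greatest common divisor is odd and divides $4$, hence equals $1$. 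Since $p>2$ with $p/2$ odd forces $p/2\geq3$, the first coordinate $p/2-2$ of $v$ is positive, so all points lie in $\mathbb{P}^2_\circ$; moreover $w$ has $1$-norm $p-1$ and lies in $B(2,p-1)\cap\mathbb{P}^2_\circ$, whereas $u$ and $v$ have $1$-norm $p$ and lie outside it.

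Setting $\mathcal{X}=((B(2,p-1)\cap\mathbb{P}^2_\circ)\setminus\{w\})\cup\{u,v\}$, its cardinality is $|B(2,p-1)\cap\mathbb{P}^2_\circ|+1$. To bound $\kappa(\mathcal{X})$ I would just track the two coordinate sums relative to those of $B(2,p-1)\cap\mathbb{P}^2_\circ$: the first changes by $-(p/2-1)+(p/2+2)+(p/2-2)=p/2+1$ and the second by $-p/2+(p/2-2)+(p/2+2)=p/2$. Hence $\kappa(\mathcal{X})$ equals $\kappa(B(2,p-1)\cap\mathbb{P}^2_\circ)+\max(p/2+1,p/2)=\kappa(B(2,p-1)\cap\mathbb{P}^2_\circ)+p/2+1$, which is exactly the required bound. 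Note that $\mathcal{X}$ already witnesses the dichotomy, since it omits the point $w$ of $B(2,p-1)\cap\mathbb{P}^2_\circ$.

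For the dichotomy itself I would argue by contraposition: suppose a set $\mathcal{X}$ as in the statement violates both (i) and (ii), so that $B(2,p-1)\cap\mathbb{P}^2_\circ\subseteq\mathcal{X}\subseteq B(2,p)\cap\mathbb{P}^2_\circ$. Then every point of $\mathcal{X}\setminus(B(2,p-1)\cap\mathbb{P}^2_\circ)$ has $1$-norm exactly $p$, and since this difference has cardinality $1$, it reduces to a single primitive point $x$. As before, the coordinate sums of $\mathcal{X}$ are $\kappa(B(2,p-1)\cap\mathbb{P}^2_\circ)+|x_1|$ and $\kappa(B(2,p-1)\cap\mathbb{P}^2_\circ)+|x_2|$, so $\kappa(\mathcal{X})=\kappa(B(2,p-1)\cap\mathbb{P}^2_\circ)+\max(|x_1|,|x_2|)$. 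The crux, and the main obstacle to isolate, is the parity fact that $\max(|x_1|,|x_2|)\geq p/2+2$: since $x$ is primitive of even $1$-norm $p$ its coordinates cannot both be even, hence both are odd, and writing $|x_1|=p/2-t$, $|x_2|=p/2+t$, oddness combined with $p/2$ odd forces $t$ even while primitivity forbids $t=0$, so $t\geq2$. Consequently $\kappa(\mathcal{X})\geq\kappa(B(2,p-1)\cap\mathbb{P}^2_\circ)+p/2+2$, contradicting $\kappa(\mathcal{X})\leq\kappa(B(2,p-1)\cap\mathbb{P}^2_\circ)+p/2+1$. This contradiction proves that (i) or (ii) must hold, and it is the same parity phenomenon that simultaneously forbids the sandwiched sets and forces the construction above to break $C$-invariance by trading one interior point for an off-center orbit of $1$-norm $p$.
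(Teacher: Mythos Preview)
Your proof is correct. The dichotomy argument is essentially the same as the paper's: both show that a sandwiched set would force the single extra point $x$ of $1$-norm $p$ to have $\max(|x_1|,|x_2|)\leq p/2+1$, and then use the parity of $p/2$ to rule this out (the paper phrases it as ``the absolute values must be $p/2-1$ and $p/2+1$, both even'', while you parametrise by $t$ and conclude $t\geq2$; these are the same observation).

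Your existence construction, however, is genuinely different and more elaborate than the paper's. The paper simply sets $\mathcal{X}=(B(2,p-1)\cap\mathbb{P}^2_\circ)\cup\{a\}$ with $a=(p/2,\,p/2+1)$: this point has $1$-norm $p+1$, is primitive because its coordinates are consecutive, and contributes $\max(|a_1|,|a_2|)=p/2+1$ to $\kappa$. Thus the paper's witness satisfies (ii) rather than (i), and no point needs to be removed. Your swap (remove $w$, add $u,v$) works, and has the mild advantage of producing a witness contained in $B(2,p)$, which illustrates that the obstruction really is (i) and not merely (ii); but the paper's one-line construction is the cleaner route to the bare existence statement.
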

\begin{proof}
Let $a$ be the point in $\mathbb{Z}^2_\circ$ such that $a_1=p/2$ and $a_2=p/2+1$. Note that $a$ belongs to $\mathbb{P}^2_\circ$ and consider the set
$$
\mathcal{X}=\left(B(2,p-1)\cap\mathbb{P}^2_\circ\right)\cup\{a\}\mbox{.}
$$

Note that $|\mathcal{X}|=\left|B(2,p-1)\cap\mathbb{P}^2_\circ\right|+1$. Recall that $B(2,p-1)\cap\mathbb{P}^2_\circ$ is invariant under the action of $C$ on $\mathbb{R}^2$. As in addition, $\max\{|a_1|,|a_2|\}=p/2+1$, 
$$
\kappa(\mathcal{X})\leq\kappa\!\left(B(2,p-1)\cap\mathbb{P}^2_\circ\right)+\frac{p}{2}+1\mbox{,}
$$
as desired. Now observe that, if a subset $\mathcal{X}$ of $\mathbb{P}^2_\circ$ satisfies that inequality and admits $B(2,p-1)\cap\mathbb{P}^2_\circ$ as a subset then, by Proposition \ref{PPP.sec.3.prop.1}, the only point $z$ of $1$-norm $p$ contained in $\mathcal{X}$ must satisfy
\begin{equation}\label{PPP.sec.4.prop.1.eq.1}
\max\{|z_1|,|z_2|\}\leq\frac{p}{2}+1\mbox{.}
\end{equation}

If in addition, $z$ is has $1$-norm $p$, the two coordinates of $z$ cannot have the same absolute value because $z$ is primitive and $p$ is greater than $2$. Hence, according to (\ref{PPP.sec.4.prop.1.eq.1}), the absolute value of the coordinates of $z$ must be $p/2-1$ and $p/2+1$. However, as $p/2$ is odd, these numbers are both even. As a consequence, $z$ cannot be primitive, a contradiction.
\end{proof}

The argument, for the proof of the following proposition is similar to that in the proof of Proposition \ref{PPP.sec.4.prop.1}, and we only sketch it.

\begin{prop}\label{PPP.sec.4.prop.2}
If $p$ is even, $p/2$ is odd, and $p>2$, then there exists a subset $\mathcal{X}$ of $\mathbb{P}^2_\circ$ of cardinality $\left|B(2,p)\cap\mathbb{P}^2_\circ\right|-1$ such that
$$
\kappa(\mathcal{X})\leq\kappa\!\left(B(2,p)\cap\mathbb{P}^2_\circ\right)-\frac{p}{2}+1\mbox{.}
$$

In addition, for any such subset $\mathcal{X}$ of $\mathbb{P}^2_\circ$, either
\begin{itemize}
\item[(i)] some point in $B(2,p-1)\cap\mathbb{P}^2_\circ$ is not contained in $\mathcal{X}$, or
\item[(ii)] some point in $\mathcal{X}$ is not contained in $B(2,p)\cap\mathbb{P}^2_\circ$.
\end{itemize}
\end{prop}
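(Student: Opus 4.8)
The plan is to mirror the proof of Proposition~\ref{PPP.sec.4.prop.1}, but to \emph{remove} a point from $B(2,p)\cap\mathbb{P}^2_\circ$ rather than add one. For the existence statement, I would take $a$ to be the point of $\mathbb{Z}^2_\circ$ with $a_1=p/2$ and $a_2=p/2-1$. Since $p>2$, these two coordinates are consecutive positive integers, so $a$ is primitive; its $1$-norm is $p-1$, hence $a$ belongs to $B(2,p-1)\cap\mathbb{P}^2_\circ$. I then set
$$
\mathcal{X}=\left(B(2,p)\cap\mathbb{P}^2_\circ\right)\mathord{\setminus}\{a\}\mbox{,}
$$
which has cardinality $\left|B(2,p)\cap\mathbb{P}^2_\circ\right|-1$, as required.

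To bound $\kappa(\mathcal{X})$, I would use that $B(2,p)\cap\mathbb{P}^2_\circ$ is invariant under $C$, so by Proposition~\ref{PPP.sec.3.prop.1} the two coordinate sums $\sum|x_1|$ and $\sum|x_2|$ taken over that intersection both equal $\kappa\!\left(B(2,p)\cap\mathbb{P}^2_\circ\right)$. Deleting $a$ decreases the first sum by $p/2$ and the second by $p/2-1$, whence
$$
\kappa(\mathcal{X})=\kappa\!\left(B(2,p)\cap\mathbb{P}^2_\circ\right)-\frac{p}{2}+1\mbox{,}
$$
settling the first part. This $\mathcal{X}$ satisfies alternative (i), since $a$ lies in $B(2,p-1)\cap\mathbb{P}^2_\circ$ but not in $\mathcal{X}$.

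For the second assertion I would argue by contradiction: suppose a subset $\mathcal{X}$ with the stated cardinality and $\kappa$ bound satisfies neither (i) nor (ii), that is, $B(2,p-1)\cap\mathbb{P}^2_\circ\subseteq\mathcal{X}\subseteq{B(2,p)\cap\mathbb{P}^2_\circ}$. Because its cardinality is one less than that of $B(2,p)\cap\mathbb{P}^2_\circ$ and it contains all of $B(2,p-1)\cap\mathbb{P}^2_\circ$, the set $\mathcal{X}$ is obtained by deleting a single point $z$, necessarily of $1$-norm $p$. As above, $\kappa(\mathcal{X})=\kappa\!\left(B(2,p)\cap\mathbb{P}^2_\circ\right)-\min\{|z_1|,|z_2|\}$, so the bound forces $\min\{|z_1|,|z_2|\}\geq{p/2-1}$. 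Combined with $|z_1|+|z_2|=p$, only the two possibilities $\{|z_1|,|z_2|\}=\{p/2,p/2\}$ and $\{|z_1|,|z_2|\}=\{p/2-1,p/2+1\}$ survive.

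The crux, exactly as in Proposition~\ref{PPP.sec.4.prop.1}, is the ensuing parity obstruction, and this is the one step that is not mere bookkeeping. In the first case $z$ has two equal coordinates of absolute value $p/2>1$, and in the second case both $p/2-1$ and $p/2+1$ are even precisely because $p/2$ is odd. In either case $z$ cannot be primitive, contradicting $z\in\mathbb{P}^2_\circ$. Hence no sandwiched $\mathcal{X}$ exists, which is the desired conclusion; everything else reduces to the same use of $C$-invariance and Proposition~\ref{PPP.sec.3.prop.1} as in the preceding proofs.
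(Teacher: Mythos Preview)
Your proof is correct and follows essentially the same approach as the paper's. The paper removes the point $(p/2-1,p/2)$ from $B(2,p)\cap\mathbb{P}^2_\circ$, which is the $\tau$-image of your $a=(p/2,p/2-1)$; for the second assertion the paper merely says to argue ``similarly to Proposition~\ref{PPP.sec.4.prop.1}'', landing on the same parity obstruction that $p/2-1$ and $p/2+1$ are both even when $p/2$ is odd, which you have spelled out in full.
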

\begin{proof}
Consider the set $\mathcal{X}$ obtained by removing from the intersection of $B(2,p)$ and $\mathbb{P}^2_\circ$ the point whose first coordinate is $p/2-1$ and whose second coordinate is $p/2$. This set has cardinality $|B(2,p)\cap\mathbb{P}^2_\circ|-1$ and satisfies
\begin{equation}\label{PPP.sec.4.prop.2.eq.1}
\kappa(\mathcal{X})\leq\kappa\!\left(B(2,p)\cap\mathbb{P}^2_\circ\right)-\frac{p}{2}+1\mbox{,}
\end{equation}
as desired. Moreover, using an argument similar to that of the proof of Proposition \ref{PPP.sec.4.prop.1}, we show that any subset $\mathcal{X}$ of $B(2,p)\cap\mathbb{P}^2_\circ$ that satisfies (\ref{PPP.sec.4.prop.2.eq.1}), has cardinality $|B(2,p)\cap\mathbb{P}^2_\circ|-1$, and admits $B(2,p-1)\cap\mathbb{P}^2_\circ$ as a subset must miss a point from $B(2,p)\cap\mathbb{P}^2_\circ$ whose absolute value of the coordinates are $p/2-1$ and $p/2+1$. However, as $p/2$ is odd, this point cannot be primitive.
\end{proof}

The following lemma is a consequence of the two above propositions.

\begin{lem}\label{PPP.sec.4.lem.2}
Assume that $p$ is even and that $p/2$ is odd. Further assume that either $k-\kappa\!\left(B(2,p-1)\cap\mathbb{P}^2_\circ\right)<p$ or $\kappa\!\left(B(2,p)\cap\mathbb{P}^2_\circ\right)-k\leq{p/2}$. If
\begin{itemize}
\item[(i)] $k-\kappa\!\left(B(2,p-1)\cap\mathbb{P}^2_\circ\right)$ is distinct from $p/2$,
\item[(ii)] $\kappa\!\left(B(2,p)\cap\mathbb{P}^2_\circ\right)-k$ is distinct from $p/2$,
\end{itemize}
then there exists a subset $\mathcal{X}$ of $\mathbb{P}^2_\circ$ satisfying $\kappa(\mathcal{X})\leq{k}$ and $|\mathcal{X}|=\lfloor\lambda(2,k)\rfloor$. Otherwise, there also exists a subset $\mathcal{X}$ of $\mathbb{P}^2_\circ$ such that $\kappa(\mathcal{X})\leq{k}$, but whose cardinality is $\lfloor\lambda(2,k)\rfloor-1$ instead of $\lfloor\lambda(2,k)\rfloor$.
\end{lem}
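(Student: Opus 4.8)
The plan is to produce, for every integer $k$ in the two ranges allowed by the hypothesis, a subset of $\mathbb{P}^2_\circ$ of the announced cardinality and of $\kappa$-value at most $k$; the two nontrivial endpoint constructions are exactly the ones already supplied by Propositions~\ref{PPP.sec.4.prop.1} and~\ref{PPP.sec.4.prop.2}, and all the remaining values of $k$ will be handled by elementary set operations. Write $\kappa_-=\kappa(B(2,p-1)\cap\mathbb{P}^2_\circ)$, $\kappa_+=\kappa(B(2,p)\cap\mathbb{P}^2_\circ)$, $N_-=|B(2,p-1)\cap\mathbb{P}^2_\circ|$, and $N_+=|B(2,p)\cap\mathbb{P}^2_\circ|$, and set $t=k-\kappa_-$ and $s=\kappa_+-k$. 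Since $\lambda(2,k)$ is affine in $k$ with slope $2/p$, the integer $\lfloor\lambda(2,k)\rfloor$ equals $N_-+\lfloor t/(p/2)\rfloor$; moreover $N_+=N_-+|S(2,p)\cap\mathbb{P}^2_\circ|$ and, by Proposition~\ref{PPP.sec.1.prop.1}, $\kappa_+-\kappa_-$ is the multiple of $p/2$ by $|S(2,p)\cap\mathbb{P}^2_\circ|$. Because $p>2$ forces every $C$-orbit of $1$-norm $p$ to have cardinality $2$, that number is even, so conditions~(i) and~(ii) fail at precisely one value of $k$ each, namely $t=p/2$ and $s=p/2$. The hypothesis is exactly that $t<p$ (the lower range) or $s\le p/2$ (the upper range), and I would treat these two ranges in turn.

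In the lower range $t<p$ one has $\lfloor\lambda(2,k)\rfloor=N_-$ when $t\le p/2-1$ and $\lfloor\lambda(2,k)\rfloor=N_-+1$ when $p/2\le t\le p-1$. When $t\le p/2$ I would simply take $\mathcal{X}=B(2,p-1)\cap\mathbb{P}^2_\circ$, whose $\kappa$-value is $\kappa_-\le k$ and whose cardinality $N_-$ is the target in the whole subrange $t\le p/2-1$ and is the value $\lfloor\lambda(2,k)\rfloor-1$ at the exceptional point $t=p/2$ where~(i) fails. When $p/2+2\le t\le p-1$ I would adjoin to $B(2,p-1)\cap\mathbb{P}^2_\circ$ the single $1$-norm-$p$ point $(p/2+2,p/2-2)$: its coordinates differ by $4$ and are odd because $p/2$ is odd, so it is primitive, and its largest coordinate $p/2+2$ does not exceed $t$, giving $\kappa(\mathcal{X})=\kappa_-+p/2+2\le k$ and $|\mathcal{X}|=N_-+1$. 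The single remaining value $t=p/2+1$ is the crux: the parity of $p/2$ forbids any primitive $1$-norm-$p$ point with largest coordinate at most $p/2+1$, so one is forced out of $B(2,p)$, and this is precisely the content of Proposition~\ref{PPP.sec.4.prop.1}, which supplies a set of cardinality $N_-+1$ and $\kappa$-value at most $\kappa_-+p/2+1=k$.

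In the upper range $s\le p/2$ a short computation gives $\lfloor\lambda(2,k)\rfloor=N_+-1$ for every $s\in\{1,\dots,p/2\}$. Removing one point $z$ from the $C$-invariant set $B(2,p)\cap\mathbb{P}^2_\circ$ lowers $\kappa$ by $\min(|z_1|,|z_2|)$, so when $1\le s\le p/2-2$ I would delete the $1$-norm-$p$ point $(p/2+2,p/2-2)$, whose smaller coordinate $p/2-2$ is at least $s$; this yields $\kappa(\mathcal{X})\le\kappa_+-s=k$ and $|\mathcal{X}|=N_+-1$. The value $s=p/2-1$ is again critical, since no primitive $1$-norm-$p$ point has both coordinates of absolute value at least $p/2-1$ (the only candidate $(p/2+1,p/2-1)$ has even coordinates), so one must instead delete a point of $1$-norm $p-1$; this is exactly Proposition~\ref{PPP.sec.4.prop.2}, which gives a set of cardinality $N_+-1$ with $\kappa(\mathcal{X})\le\kappa_+-p/2+1=k$. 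Finally, at the exceptional value $s=p/2$ where~(ii) fails, I would delete a whole $C$-orbit of two $1$-norm-$p$ points (for $p=2$, the two fixed points of $C$ of $1$-norm $2$): the result stays $C$-invariant, its $\kappa$-value drops to $\kappa_+-p\le k$, and its cardinality is $N_+-2=\lfloor\lambda(2,k)\rfloor-1$.

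Since the genuine difficulties---the use of a $1$-norm-$(p+1)$ point near the lower boundary and the deletion of a $1$-norm-$(p-1)$ point near the upper boundary, together with the impossibility of doing better there---are already isolated in Propositions~\ref{PPP.sec.4.prop.1} and~\ref{PPP.sec.4.prop.2} and in the bounds of Lemmas~\ref{PPP.sec.3.lem.1} and~\ref{PPP.sec.3.lem.2}, the only real work left in this lemma is bookkeeping. The main obstacle I anticipate is therefore purely combinatorial: verifying that the step function $\lfloor\lambda(2,k)\rfloor$ is matched by the correct construction at \emph{every} integer of both ranges, that the exceptional points $t=p/2$ and $s=p/2$ coincide with the failure of~(i) and of~(ii), and that the two ranges agree on their (possibly nonempty) overlap, where $|S(2,p)\cap\mathbb{P}^2_\circ|=2$ and $t+s=p$. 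Each of these checks reduces to the elementary primitivity facts above, all of which hinge on $p/2$ being odd.
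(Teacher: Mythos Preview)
Your argument is correct and follows the same case-analysis skeleton as the paper's proof: split into the lower range $t<p$ and the upper range $s\le p/2$, handle the harmless values of $t$ and $s$ by direct set operations on $B(2,p-1)\cap\mathbb{P}^2_\circ$ or $B(2,p)\cap\mathbb{P}^2_\circ$, and defer to Propositions~\ref{PPP.sec.4.prop.1} and~\ref{PPP.sec.4.prop.2} at the delicate boundary values. The one difference is in how broadly those propositions are invoked: the paper simply calls Proposition~\ref{PPP.sec.4.prop.1} for every $t$ with $p/2<t<p$ (and Proposition~\ref{PPP.sec.4.prop.2} for every $s$ with $0<s<p/2$), whereas you restrict them to the single values $t=p/2+1$ and $s=p/2-1$ and cover the remaining values with the explicit $1$-norm-$p$ point $(p/2+2,\,p/2-2)$. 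Your variant keeps $\mathcal{X}$ sandwiched between $B(2,p-1)\cap\mathbb{P}^2_\circ$ and $B(2,p)\cap\mathbb{P}^2_\circ$ for a larger set of $k$, which is pleasant in view of Theorem~\ref{PPP.sec.0.thm.4}, but for the present lemma the paper's shortcut is equally valid and slightly shorter.
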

\begin{proof}
Assume that $p$ is even and $p/2$ is odd and recall that
$$
\lambda(2,k)=\left|B(2,p-1)\cap\mathbb{P}^2_\circ\right|+\frac{k-\kappa\!\left(B(2,p-1)\cap\mathbb{P}^2_\circ\right)}{p/2}\mbox{.}
$$

Let us first treat the case when
$$
k-\kappa\!\left(B(2,p-1)\cap\mathbb{P}^2_\circ\right)<p\mbox{.}
$$

Observe that, if $k$ is less than $\kappa\!\left(B(2,p-1)\cap\mathbb{P}^2_\circ\right)+p/2$ or $k$ is equal to this quantity but $p$ is greater than $2$, then we can take $\mathcal{X}=B(2,p-1)\cap\mathbb{P}^2_\circ$.

If $p=2$ and $k-\kappa\!\left(B(2,p-1)\cap\mathbb{P}^2_\circ\right)\geq{p/2}$, observe that
\begin{equation}\label{PPP.sec.4.lem.2.eq.1}
\left\lfloor\frac{k-\kappa\!\left(B(2,p-1)\cap\mathbb{P}^2_\circ\right)}{p/2}\right\rfloor\!=1\mbox{.}
\end{equation}

Let $\mathcal{X}$ be the union of $B(2,p-1)\cap\mathbb{P}^2_\circ$ with the singleton that contains the point whose two coordinates are equal to $1$. This set has cardinality $\left|B(2,p-1)\cap\mathbb{P}^2_\circ\right|+1$ and satisfies $\kappa(\mathcal{X})\leq\kappa\!\left(B(2,p-1)\cap\mathbb{P}^2_\circ\right)+p/2$, as desired. Therefore, the lemma holds in this case.

If $p>2$ and $k-\kappa\!\left(B(2,p-1)\cap\mathbb{P}^2_\circ\right)>p/2$, then (\ref{PPP.sec.4.lem.2.eq.1}) also holds. Therefore, a set $\mathcal{X}$ with the desired properties is provided by Proposition \ref{PPP.sec.4.prop.1}.

Now assume that 
$$
\kappa\!\left(B(2,p)\cap\mathbb{P}^2_\circ\right)-k\leq\frac{p}{2}\mbox{.}
$$

If $p$ is equal to $2$, then consider the set $\mathcal{X}$ obtained from $B(2,p)\cap\mathbb{P}^2_\circ$ by removing the point whose two coordinates are equal to $1$. This set has cardinality $\left|B(2,p)\cap\mathbb{P}^2_\circ\right|-1$, as desired, and satisfies $\kappa(\mathcal{X})\leq\kappa\!\left(B(2,p)\cap\mathbb{P}^2_\circ\right)-p/2$. Therefore, the lemma holds in this case.

If $p>2$ and $\kappa\!\left(B(2,p)\cap\mathbb{P}^2_\circ\right)-k<p/2$, a set $\mathcal{X}$ with the desired properties is provided by Proposition \ref{PPP.sec.4.prop.2}, and if $\kappa\!\left(B(2,p)\cap\mathbb{P}^2_\circ\right)-k=p/2$, then consider any point $x$ of $1$-norm $p$ in $B(2,p)\cap\mathbb{P}^2_\circ$. Any set obtained by removing the two elements of $C\mathord{\cdot}x$ from $B(2,p)\cap\mathbb{P}^2_\circ$ will have the desired properties.
\end{proof}

%
%
We are ready to build the announced subsets of $\mathbb{P}^2_\circ$. The construction relies on Lemma \ref{PPP.sec.4.lem.2} when $k$ is close to $\kappa\!\left(B(2,p-1)\cap\mathbb{P}^2_\circ\right)$ or to $\kappa\!\left(B(2,p)\cap\mathbb{P}^2_\circ\right)$, and on Lemma \ref{PPP.sec.3.5.lem.2} otherwise. We also use the property that, if $n$ is an odd integer and $m$ a positive integer, then $n-2^m$ and $n+2^m$ are relatively prime.

\begin{lem}\label{PPP.sec.4.lem.3}
Assume that $p$ is even and that $p/2$ is odd. If
\begin{itemize}
\item[(i)] $k-\kappa\!\left(B(2,p-1)\cap\mathbb{P}^2_\circ\right)$ is distinct from $p/2$,
\item[(ii)] $\kappa\!\left(B(2,p)\cap\mathbb{P}^2_\circ\right)-k$ is distinct from $p/2$,
\end{itemize}
then there exists a subset $\mathcal{X}$ of $\mathbb{P}^2_\circ$ satisfying $\kappa(\mathcal{X})\leq{k}$ and $|\mathcal{X}|=\lfloor\lambda(2,k)\rfloor$. Otherwise, there also exists a subset $\mathcal{X}$ of $\mathbb{P}^2_\circ$ such that $\kappa(\mathcal{X})\leq{k}$, but whose cardinality is $\lfloor\lambda(2,k)\rfloor-1$ instead of $\lfloor\lambda(2,k)\rfloor$.
\end{lem}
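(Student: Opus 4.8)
The plan is to reduce to a single ``middle'' regime and then invoke Lemma~\ref{PPP.sec.3.5.lem.2}. First I would set $q=p/2$, which is odd by hypothesis, and $t=k-\kappa(B(2,p-1)\cap\mathbb{P}^2_\circ)$, and dispose of small $p$ (in particular $p=2$) and of the values of $k$ near the two endpoints of the interval $[\kappa(B(2,p-1)\cap\mathbb{P}^2_\circ),\kappa(B(2,p)\cap\mathbb{P}^2_\circ))$ directly through Lemma~\ref{PPP.sec.4.lem.2}: that lemma already produces a set of cardinality $\lfloor\lambda(2,k)\rfloor$ when $t<p$ or $\kappa(B(2,p)\cap\mathbb{P}^2_\circ)-k\leq q$, and it is precisely there that the exceptional cardinality $\lfloor\lambda(2,k)\rfloor-1$ appears, exactly when $t=q$ or $\kappa(B(2,p)\cap\mathbb{P}^2_\circ)-k=q$, that is, when (i) or (ii) fails. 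Hence it suffices to treat the remaining range, where $\kappa(B(2,p-1)\cap\mathbb{P}^2_\circ)+p\leq k<\kappa(B(2,p)\cap\mathbb{P}^2_\circ)-q$; there (\ref{PPP.sec.3.5.lem.2.eq.0}) holds, conditions (i) and (ii) are automatic, and only the value $\lfloor\lambda(2,k)\rfloor$ has to be realized.

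In that range I would apply Lemma~\ref{PPP.sec.3.5.lem.2}, so the whole task becomes the construction of a union $\mathcal{Y}$ of $C$-orbits of primitive points of $1$-norm $p$ with $|\mathcal{Y}|\geq 5$ such that, for every $j$ with $2\leq j\leq|\mathcal{Y}|-2$, some $j$-element subset $\mathcal{S}\subseteq\mathcal{Y}$ satisfies $\kappa(\mathcal{S})=jq$. Writing a positive-orthant point of $1$-norm $p$ as $(q+\delta,q-\delta)$, one checks that it lies in $\mathbb{P}^2_\circ$ exactly when $\delta$ is even and coprime to $q$, and that $\kappa(\mathcal{S})=jq$ holds if and only if the deviations $\delta$ of the points of $\mathcal{S}$ sum to $0$. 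The robust supply of such points is furnished by the coprimality fact quoted just before the statement: for each $m$ with $2^m<q$ the point $(q-2^m,q+2^m)$ is primitive, so its orbit contributes the opposite deviations $\pm2^m$. Taking $\mathcal{Y}$ to contain enough of these orbits makes the even values of $j$ immediate, since a union of $j/2$ full orbits is invariant under $C$ and hence has $\kappa=jq$ by Proposition~\ref{PPP.sec.3.prop.1}.

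The delicate point, and the one I expect to be the main obstacle, is the odd values of $j$. A signed sum of \emph{distinct} powers of two is never zero, so no odd-size subset drawn only from the orbits above can have vanishing deviation sum; this is exactly the parity phenomenon that singles out the case ``$q$ odd'' and that already underlies Lemmas~\ref{PPP.sec.3.lem.2} and \ref{PPP.sec.4.lem.1}. To repair it I would enlarge $\mathcal{Y}$ by one further orbit whose deviation is a sum $2^a+2^b$ of two of the available powers, chosen so that the point $(q-2^a-2^b,q+2^a+2^b)$ is again primitive; by the same coprimality principle this reduces to picking the gap $b-a$ so that $1+2^{\,b-a}$ is coprime to $q$, which can be arranged for all $p$ large enough. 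The triple consisting of this point together with the orbit representatives of deviations $+2^a$ and $+2^b$ then has deviation sum $0$, giving the smallest odd size $j=3$; every larger odd $j$ is obtained by adjoining $(j-3)/2$ full orbits, which preserve both the $C$-invariance and the deviation balance. The remaining work is bookkeeping: checking that the middle range forces $|S(2,p)\cap\mathbb{P}^2_\circ|$, and hence the number of available orbits, to be large enough for $\mathcal{Y}$ to reach cardinality at least $5$ and to supply disjoint orbits for every $j$ up to $|\mathcal{Y}|-2$, and confirming that the small values of $p$ excluded above are covered by Lemma~\ref{PPP.sec.4.lem.2}. Once $\mathcal{Y}$ is in hand, Lemma~\ref{PPP.sec.3.5.lem.2} delivers the desired subset of cardinality $\lfloor\lambda(2,k)\rfloor$.
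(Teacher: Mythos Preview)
Your overall strategy matches the paper's: dispose of $k$ near the two endpoints via Lemma~\ref{PPP.sec.4.lem.2}, then invoke Lemma~\ref{PPP.sec.3.5.lem.2} in the middle range. The gap is in your construction of $\mathcal{Y}$ for odd $j$.

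Your odd-$j$ repair requires, for every odd $q=p/2$, a pair $1\le a<b$ with $2^a+2^b<q$ and $\gcd(q,1+2^{b-a})=1$. This cannot always be arranged. For $q=15$ the only admissible pairs are $(a,b)\in\{(1,2),(1,3),(2,3)\}$, giving $1+2^{b-a}\in\{3,5,3\}$, each sharing a factor with $15$; more generally $q=2^{2^n}-1$ fails for every $n\ge2$, so the assertion ``can be arranged for all $p$ large enough'' is false, and these values of $p$ lie squarely in the middle range where Lemma~\ref{PPP.sec.4.lem.2} does not help. The obstruction is in fact intrinsic to your positive-orthant framework: if $\mathcal{S}$ consists of three points $(q+\delta_i,q-\delta_i)$ with the $\delta_i$ distinct, even, $|\delta_i|<q$, and coprime to $q$, then $\kappa(\mathcal{S})=3q$ forces $\delta_1+\delta_2+\delta_3=0$; for $q=15$ the admissible deviations are $\pm2,\pm4,\pm8,\pm14$, and no three of them sum to zero.

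The paper sidesteps this by leaving the positive orthant. It takes $a=(q-2,q+2)$, $c=(q+4,q-4)$, and crucially $b=(q-2,-(q+2))$ with a \emph{negative} coordinate; all three are primitive simply because $q$ is odd, and $\sum|x_1|=\sum|x_2|=3q$ by inspection, so $\mathcal{S}=\{a,b,c\}$ realises $j=3$ for every $p\ge10$ with no coprimality hypothesis. With $\mathcal{Y}=(C\!\cdot\!a)\cup(C\!\cdot\!b)\cup(C\!\cdot\!c)$ of cardinality $6$, the even cases $j=2,4$ come from full orbits, and Lemma~\ref{PPP.sec.3.5.lem.2} applies. The leftover case $p=6$ is done by hand, and $p=2$ has empty middle range. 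The moral is that your deviation bookkeeping is the right idea, but you must allow points with a negative coordinate to break the parity obstruction.
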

\begin{proof}
Assume that $p$ is even and that $p/2$ is odd. We will split the proof into three cases. The first case, when $k-\kappa\!\left(B(2,p-1)\cap\mathbb{P}^2_\circ\right)$ is less than $p$ or $\kappa\!\left(B(2,p)\cap\mathbb{P}^2_\circ\right)-k$ is at most $p/2$, is taken care of by Lemma \ref{PPP.sec.4.lem.2}.

Now assume that
\begin{equation}\label{PPP.sec.4.lem.3.eq.1}
\kappa\!\left(B(2,p-1)\cap\mathbb{P}^2_\circ\right)+p\leq{k}<\kappa\!\left(B(2,p)\cap\mathbb{P}^2_\circ\right)-\frac{p}{2}\mbox{,}
\end{equation}
and that $p$ is at least $10$. In this case, we are going to use Lemma \ref{PPP.sec.3.5.lem.2}. Consider the point $a$ of $\mathbb{R}^2$ whose first coordinate is $p/2-2$ and whose second coordinate is $p/2+2$. Further consider the point $b$ obtained from $a$ by negating the second coordinate and the point $c$ whose first coordinate is $p/2+4$ and whose second coordinate is $p/2-4$. These points all have $1$-norm $p$ and, as $p/2$ is odd, they are primitive. Since the orbits of these points under the action of $C$ on $\mathbb{R}^2$ have cardinality $2$, and since these orbits are pairwise disjoint, the set
$$
\mathcal{Y}=(C\mathord{\cdot}a)\cup(C\mathord{\cdot}b)\cup(C\mathord{\cdot}c)\mbox{.}
$$
has cardinality $6$. Let us consider an integer $j$ such that $2\leq{j}\leq4$. We will exhibit a subset of $j$ elements of $\mathcal{S}$ that satisfies (\ref{PPP.sec.3.5.lem.2.eq.0.5}), as required by Lemma~\ref{PPP.sec.3.5.lem.2}. If $j$ is equal to $2$ we take for $\mathcal{S}$ the set $\{a,\tau(a)\}$. If $j$ is equal to $3$, we take $\mathcal{S}=\{a,b,c\}$, and if $j$ is equal to $4$, we take $\mathcal{S}=\{a,\tau(a),b,\tau(b)\}$. By the choice of $a$, $b$, and $c$, $\mathcal{S}$ satisfies (\ref{PPP.sec.3.5.lem.2.eq.0.5}). Therefore, according to Lemma \ref{PPP.sec.3.5.lem.2}, there exists a subset $\mathbb{P}^2_\circ$ such that $\kappa(\mathcal{X})\leq{k}$ and $|\mathcal{X}|=\lfloor\lambda(2,k)\rfloor$, as desired.

Now assume that (\ref{PPP.sec.4.lem.3.eq.1}) still holds, but that $p=6$. Recall that
$$
\lambda(2,k)=\left|B(2,p-1)\cap\mathbb{P}^2_\circ\right|+\frac{k-\kappa\!\left(B(2,p-1)\cap\mathbb{P}^2_\circ\right)}{p/2}\mbox{.}
$$

It follows from Theorem \ref{PPP.sec.0.thm.1} that
$$
\kappa\!\left(B(2,6)\cap\mathbb{P}^2_\circ\right)-\kappa\!\left(B(2,5)\cap\mathbb{P}^2_\circ\right)=12\mbox{.}
$$

Hence, according to (\ref{PPP.sec.4.lem.3.eq.1}), $6\leq{k-\kappa\!\left(B(2,5)\cap\mathbb{P}^2_\circ\right)}\leq8$. In this range,
$$
\left\lfloor\frac{k-\kappa\!\left(B(2,5)\cap\mathbb{P}^2_\circ\right)}{6/2}\right\rfloor\!=2\mbox{.}
$$

Therefore, the subset $\mathcal{X}$ of $\mathbb{P}^2_\circ$ made up of the points from $B(2,5)\cap\mathbb{P}^2_\circ$ together with the two primitive points of $1$-norm $6$ whose coordinates are $1$ and $5$ satisfies the desired properties as it has cardinality $\left|B(2,5)\cap\mathbb{P}^2_\circ\right|+2$ and
$$
\kappa(\mathcal{X})\leq\kappa\!\left(B(2,5)\cap\mathbb{P}^2_\circ\right)+6\mbox{.}
$$

Finally, note that no other case needs to be treated. In particular, when $p$ is equal to $2$, it follows from Theorem \ref{PPP.sec.0.thm.1} that $\kappa\!\left(B(2,p-1)\cap\mathbb{P}^2_\circ\right)=1$ and $\kappa\!\left(B(2,p)\cap\mathbb{P}^2_\circ\right)=3$. Therefore, in this case, (\ref{PPP.sec.4.lem.3.eq.1}) is never satisfied.
\end{proof}

For any positive integer $p$, let $I_p$ be the set of the odd multiples of $p/2$ that lie between $\kappa\!\left(B(2,p-1)\cap\mathbb{P}^2_\circ\right)$ and $\kappa\!\left(B(2,p)\cap\mathbb{P}^2_\circ\right)$. Denote
$$
\mathbb{E}_e=\bigcup_{p=1}^\infty{I_{4p}}\mbox{.}
$$

Further consider the set
$$
\mathbb{E}_o=\bigcup\left\{\kappa\!\left(B(2,p-1)\cap\mathbb{P}^2_\circ\right)+\frac{p}{2},\kappa\!\left(B(2,p)\cap\mathbb{P}^2_\circ\right)-\frac{p}{2}\right\}\!\mbox{,}
$$
where the union ranges over the even numbers $p$ greater than $2$ such that $p/2$ is odd. Denote $\mathbb{E}=\mathbb{E}_e\cup\mathbb{E}_o$. With this notation, the expression for $\delta_z(2,k)$ provided by Theorem \ref{PPP.sec.0.thm.2} is a consequence of Lemmas \ref{PPP.sec.3.lem.0.5}, \ref{PPP.sec.3.lem.1}, \ref{PPP.sec.3.lem.2}, \ref{PPP.sec.3.5.lem.1}, \ref{PPP.sec.4.lem.1}, and \ref{PPP.sec.4.lem.3}. We now estimate the density of $\mathbb{E}$ within $\mathbb{N}$ in the $2$-dimensional case, which completes the proof of Theorem \ref{PPP.sec.0.thm.2}.

\begin{lem}\label{PPP.sec.4.lem.4}
If $d$ is equal to $2$, then $\displaystyle\lim_{k\rightarrow\infty}\frac{\left|\mathbb{E}\cap[1,k]\right|}{k}=0$.
\end{lem}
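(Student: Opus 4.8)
The plan is to prove the stronger statement that $|\mathbb{E}\cap[1,k]|=O(k^{2/3})$; dividing by $k$ then gives the claimed limit. Recall that $\mathbb{E}=\mathbb{E}_e\cup\mathbb{E}_o$, and I would bound the two pieces separately. The first thing to pin down is the range of indices that can contribute a member of $\mathbb{E}\cap[1,k]$. Every element of $\mathbb{E}$ arising from an index $q$ lies between $\kappa(B(2,q-1)\cap\mathbb{P}^2_\circ)$ and $\kappa(B(2,q)\cap\mathbb{P}^2_\circ)$, so only indices with $\kappa(B(2,q-1)\cap\mathbb{P}^2_\circ)\leq{k}$ matter. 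By the estimate from Theorem~4.2 of \cite{DezaPourninSukegawa2020} recalled in the proof of Lemma~\ref{PPP.sec.3.5.lem.4}, the quantity $\kappa(B(2,q)\cap\mathbb{P}^2_\circ)$ grows like a positive constant times $q^3$. Hence there is a constant $\gamma>0$ with $\kappa(B(2,q-1)\cap\mathbb{P}^2_\circ)\geq\gamma q^3$ for all large $q$, so the largest contributing index $Q=Q(k)$ satisfies $Q=O(k^{1/3})$.

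The set $\mathbb{E}_o$ is the easy part: by its definition, each even $q>2$ with $q/2$ odd contributes at most the two points $\kappa(B(2,q-1)\cap\mathbb{P}^2_\circ)+q/2$ and $\kappa(B(2,q)\cap\mathbb{P}^2_\circ)-q/2$. Since at most $Q$ indices contribute, this gives $|\mathbb{E}_o\cap[1,k]|\leq{2Q}=O(k^{1/3})$.

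The core of the argument is the bound on $\mathbb{E}_e$. For an index $q=4p$, the set $I_q$ consists of the odd multiples of $q/2$ lying between $\kappa(B(2,q-1)\cap\mathbb{P}^2_\circ)$ and $\kappa(B(2,q)\cap\mathbb{P}^2_\circ)$. The length of that interval is given by the identity stated after Proposition~\ref{PPP.sec.1.prop.1}, namely $q|S(2,q)\cap\mathbb{P}^2_\circ|/2$; and since the cross-polytope sphere $S(2,q)$ carries only $4q$ lattice points, we have $|S(2,q)\cap\mathbb{P}^2_\circ|\leq{2q}$, so the interval has length at most $q^2$. (A short computation from Theorem~\ref{PPP.sec.1.thm.1} in fact gives $|S(2,q)\cap\mathbb{P}^2_\circ|=2J_1(q)$ with $J_1$ Euler's totient, but the crude bound suffices here.) As consecutive odd multiples of $q/2$ are spaced $q$ apart, $I_q$ has at most $q^2/q+1=q+1$ elements. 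Summing over the contributing indices, and absorbing the single interval that may straddle $k$, yields $|\mathbb{E}_e\cap[1,k]|\leq\sum_{q\leq{Q}}(q+1)=O(Q^2)=O(k^{2/3})$.

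Combining the two bounds gives $|\mathbb{E}\cap[1,k]|=O(k^{2/3})=o(k)$, which is the claim. The hard part is the treatment of $\mathbb{E}_e$: in contrast with $\mathbb{E}_o$, a single interval can contribute on the order of $q\sim{k^{1/3}}$ exceptional values, so no per-interval argument alone succeeds. What saves the day is that the cubic growth of $\kappa$ forces the number of relevant intervals to be only $O(k^{1/3})$ as well, and the product of these two $k^{1/3}$ factors remains negligible against $k$.
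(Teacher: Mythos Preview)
Your proof is correct and follows essentially the same route as the paper: bound the number of exceptional values contributed by each interval $[\kappa(B(2,q-1)\cap\mathbb{P}^2_\circ),\kappa(B(2,q)\cap\mathbb{P}^2_\circ)]$, sum over the $O(k^{1/3})$ relevant indices, and compare with the cubic growth of $\kappa$ to obtain $|\mathbb{E}\cap[1,k]|=O(k^{2/3})=o(k)$.

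The only notable difference is in how the per-interval count is handled. The paper identifies the number of odd multiples of $i/2$ in the $i$th interval as exactly $J_1(i)$ (Euler's totient), and then invokes the classical estimate $\sum_{i\leq p}J_1(i)\sim 3p^2/\pi^2$. You instead use the crude bound $|I_q|\leq q+1$, obtained directly from the interval length $\leq q^2$ and the spacing $q$ of odd multiples of $q/2$, and sum to $O(Q^2)$. Your version is slightly more elementary in that it avoids the totient-sum asymptotic altogether; the paper's version is a touch sharper in the implied constant but yields the same $O(k^{2/3})$ bound. Your explicit split into $\mathbb{E}_o$ and $\mathbb{E}_e$ is also a mild clarification over the paper, which tacitly absorbs $\mathbb{E}_o$ into the same overcount.
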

\begin{proof}
Observe that there are exactly
$$
\frac{\kappa\!\left(B(2,i)\cap\mathbb{P}^2_\circ\right)-\kappa\!\left(B(2,i-1)\cap\mathbb{P}^2_\circ\right)}{i}
$$
odd multiples of $i/2$ between $\kappa\!\left(B(2,i-1)\cap\mathbb{P}^2_\circ\right)$ and $\kappa\!\left(B(2,i)\cap\mathbb{P}^2_\circ\right)$. According to Theorem \ref{PPP.sec.0.thm.1}, this quantity is precisely $J_1(i)$. Hence, when $d=2$,
$$
\left|\mathbb{E}\cap[1,k]\right|\leq\sum_{i=1}^pJ_1(i)\mbox{,}
$$
where $p$ is the smallest integer such that $k<\kappa\!\left(B(2,p)\cap\mathbb{P}^2_\circ\right)$. Therefore,
\begin{equation}\label{PPP.sec.4.lem.4.eq.1}
\frac{\left|\mathbb{E}\cap[1,k]\right|}{k}\leq\frac{1}{\kappa\!\left(B(2,p-1)\cap\mathbb{P}^2_\circ\right)}\sum_{i=1}^pJ_1(i)\mbox{.}
\end{equation}

Now recall that $J_1(i)$ is Euler's totient function (see, for instance \cite{HardyWright1938}). In particular, we have the following estimate.
$$
\lim_{p\rightarrow\infty}\frac{1}{p^2}\sum_{i=1}^pJ_1(i)=\frac{3}{\pi^2}\mbox{.}
$$

In addition, according to Theorem 4.2 from \cite{DezaPourninSukegawa2020},
$$
\lim_{p\rightarrow\infty}\frac{\kappa\!\left(B(2,p)\cap\mathbb{P}^2_\circ\right)}{p^3}=\frac{1}{3\zeta(2)}\mbox{.}
$$

Hence letting $k$ go to infinity in (\ref{PPP.sec.4.lem.4.eq.1}) proves the lemma.
\end{proof}

\section{The geometry of packed primitive point sets}\label{PPP.sec.5}

In this section, we gather results on which sets of primitive points solve our packing problem. The first one is Theorem \ref{PPP.sec.0.thm.4}, announced in the introduction, that we prove using the constructions in the previous two sections.

\begin{proof}[Proof of Theorem \ref{PPP.sec.0.thm.4}]
First observe that all the subsets $\mathcal{X}$ of $\mathbb{P}^d_\circ$ such that $|\mathcal{X}|=\delta_z(d,k)$ and $\kappa(\mathcal{X})\leq{k}$ constructed in the proofs of Lemmas \ref{PPP.sec.3.5.lem.1},  \ref{PPP.sec.3.5.lem.2}, and \ref{PPP.sec.3.5.lem.3} satisfy the double inequality
\begin{equation}\label{PPP.sec.0.thm.4.eq.1}
B(d,p-1)\cap\mathbb{P}^d_\circ\subset\mathcal{X}\subset{B(d,p)\cap\mathbb{P}^d_\circ}\mbox{,}
\end{equation}
where $p$ is the smallest integer such that $k$ is less than $\kappa\!\left(B(d,p)\cap\mathbb{P}^d_\circ\right)$. Hence, the theorem holds when $d>2$, and when $d=2$ while $p$ is odd.

Now assume that $d=2$ and that $p$ is even. Observe that the subset $\mathcal{X}$ of $\mathbb{P}^d_\circ$ such that $|\mathcal{X}|=\delta_z(d,k)$ and $\kappa(\mathcal{X})\leq{k}$ constructed in the proof of Lemma \ref{PPP.sec.4.lem.1} also satisfies (\ref{PPP.sec.0.thm.4.eq.1}). The same holds for the construction given in the proof of Lemma \ref{PPP.sec.4.lem.3} except when $p/2$ is odd, $p$ is greater than $2$ and either
$$
k=\kappa\!\left(B(2,p-1)\cap\mathbb{P}^d_\circ\right)+\frac{p}{2}+1\mbox{,}
$$
or
$$
k=\kappa\!\left(B(2,p)\cap\mathbb{P}^d_\circ\right)-\frac{p}{2}+1\mbox{,}
$$
the cases treated by Propositions \ref{PPP.sec.4.prop.1} and \ref{PPP.sec.4.prop.2}. According to these propositions, a subset $\mathcal{X}$ of $\mathbb{P}^d_\circ$ such that $|\mathcal{X}|=\delta_z(d,k)$ and $\kappa(\mathcal{X})\leq{k}$ cannot satisfy (\ref{PPP.sec.0.thm.4.eq.1}) in these cases, and the theorem follows.
\end{proof}

Let us turn our attention to the unicity of the subsets of $\mathbb{P}^d_\circ$ that solve our primitive point packing problem. Recall that we need to prove that unicity is not achieved apart for the cases already identified in \cite{DezaPourninSukegawa2020}. It is noteworthy that this will not require any knowledge on the value of $\delta_z(d,k)$. Indeed, the general idea in order to prove that several subsets of $\mathbb{P}^d_\circ$ solve our packing problem, is to consider such a subset, and to replace some of the points it contains without affecting its cardinality or increasing its image by $\kappa$.

We will do this by means of the following proposition.

\begin{prop}\label{PPP.sec.5.prop.1}
For any point $z$ in $\mathbb{P}^d_\circ$, there exist a set $\mathcal{K}$ of $d$ points from $\mathbb{P}^d_\circ$, all of $1$-norm $\|z\|_1$, and a partition of $\mathcal{K}$ into $d/|C\mathord{\cdot}z|$ subsets, each of cardinality $|C\mathord{\cdot}z|$ such that, if $\mathcal{L}$ is any of these subsets, then
$$
\kappa(C\mathord{\cdot}z)=\sum_{x\in\mathcal{L}}|x_i|
$$
for every integer $i$ satisfying $1\leq{i}\leq{d}$. In addition, there exists a point of $\mathbb{P}^d_\circ$ whose orbit under the action of $C$ on $\mathbb{R}^d$ admits $\mathcal{L}$ as a subset.
\end{prop}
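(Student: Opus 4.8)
The plan is to reduce everything to the behaviour of absolute values under $\tau$. Write $a$ for the vector $(|z_1|,\dots,|z_d|)$ of coordinate absolute values, and set $m=|C\mathord{\cdot}z|$. The key observation is that $\tau$ acts on absolute values exactly as the cyclic shift $\sigma$, irrespective of the sign corrections: since $\tau(x)=\pm\sigma(x)$, one has $|\tau(x)_i|=|x_{i-1}|$ with indices read cyclically, so the absolute-value vector of $\tau^j(x)$ is $\sigma^j(a)$ whenever $|x|=a$. I would first use this to identify $m$ with the \emph{exact} period of $a$. The equality $\tau^m(z)=z$ forces $\sigma^m(a)=a$, so the period of $a$ divides $m$; conversely, if $m_0$ is that period then $\tau^{m_0}(z)=\pm z$, and since both $\tau^{m_0}(z)$ and $z$ lie in $\mathbb{P}^d_\circ$, which meets each antipodal pair in a single point, they must coincide, whence $m\leq m_0$. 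Thus $m$ is exactly the period of $a$, and $a$ is the concatenation of $\ell=d/m$ identical blocks of length $m$.

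The second step is the balancing identity. Let $w$ be any point of $\mathbb{P}^d_\circ$ with $|w|=a$, and put $\mathcal{L}_w=\{\tau^j(w):0\leq j\leq m-1\}$. Since $|w|=a$, coordinate $i$ of $\tau^j(w)$ has absolute value $a_{i-j}$, so for every $i$
$$
\sum_{x\in\mathcal{L}_w}|x_i|=\sum_{j=0}^{m-1}a_{i-j}=\sum_{r=1}^{m}a_r,
$$
because a window of $m$ consecutive entries of a sequence of period $m$ is a full period. The right-hand side does not depend on $i$; taking $w=z$ gives $\mathcal{L}_z=C\mathord{\cdot}z$ and shows that this common value is precisely $\kappa(C\mathord{\cdot}z)$. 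Note that the argument of the first step applies verbatim to $w$, so $|C\mathord{\cdot}w|=m$ and the $m$ iterates above are distinct; hence $\mathcal{L}_w$ consists of $m$ points of $\mathbb{P}^d_\circ$, all of $1$-norm $\|w\|_1=\|z\|_1$, and it is contained in the orbit $C\mathord{\cdot}w$, so $w$ witnesses the final assertion for $\mathcal{L}=\mathcal{L}_w$.

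The third step is to assemble $\ell$ pairwise disjoint such sets into $\mathcal{K}$. I would first show that $\mathcal{L}_w\cap\mathcal{L}_{w'}\neq\emptyset$ forces $w=w'$: a common point gives $\tau^{j}(w)=\tau^{j'}(w')$, and comparing absolute-value vectors yields $\sigma^{j}(a)=\sigma^{j'}(a)$, hence $j\equiv j'\pmod m$ by the periodicity just established; applying $\tau^{-j}$ then gives $w=w'$. Consequently the sets $\mathcal{L}_w$ are disjoint as soon as their generators $w$ are distinct, so it remains to exhibit $\ell$ distinct points $w$ of $\mathbb{P}^d_\circ$ with $|w|=a$; any such point is automatically primitive and of $1$-norm $\|z\|_1$, since both properties depend only on absolute values. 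If $z$ has $s$ nonzero coordinates there are exactly $2^{s-1}$ such sign patterns in $\mathbb{P}^d_\circ$, and writing $s=\ell s_0$, where $s_0\geq1$ is the number of nonzero entries in one period block, gives $\ell=s/s_0\leq s\leq 2^{s-1}$. Hence $\ell$ distinct generators $w^{(1)},\dots,w^{(\ell)}$ exist; setting $\mathcal{K}=\bigcup_{k}\mathcal{L}_{w^{(k)}}$ with the partition $\{\mathcal{L}_{w^{(k)}}\}$ yields a set of $\ell m=d$ points having all the required properties.

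The main obstacle I anticipate is the bookkeeping in the first step---proving that $m=|C\mathord{\cdot}z|$ is exactly the period of $a$---since it is what simultaneously guarantees that each block sum $\sum_{r=1}^{m}a_r$ is a full period (so the coordinate sums are constant and equal to $\kappa(C\mathord{\cdot}z)$) and that the orbit pieces have the correct size $m$. Once this identification is in place, the balancing identity and the disjointness criterion are immediate, and the only remaining point is the elementary inequality $\ell\leq 2^{s-1}$ ensuring that enough sign variants of $z$ are available.
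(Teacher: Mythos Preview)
Your argument has a genuine gap in the first step: the claim that $m=|C\mathord{\cdot}z|$ equals the exact period $m_0$ of the absolute-value vector $a=(|z_1|,\dots,|z_d|)$ is false. From $\sigma^{m_0}(a)=a$ you correctly deduce that $\tau^{m_0}(z)$ and $z$ have the same coordinate absolute values, but this does \emph{not} force $\tau^{m_0}(z)=\pm z$; the sign patterns can differ. A concrete counterexample in $d=4$ is $z=(1,1,-1,-1)\in\mathbb{P}^4_\circ$: here $a=(1,1,1,1)$ has period $m_0=1$, yet $\tau(z)=(1,-1,-1,1)$, which is neither $z$ nor $-z$, and in fact $|C\mathord{\cdot}z|=2$. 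So only the divisibility $m_0\mid m$ holds in general.

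This error propagates. Your claim that every $w\in\mathbb{P}^d_\circ$ with $|w|=a$ satisfies $|C\mathord{\cdot}w|=m$ fails for the same reason: with $a=(1,1,1,1)$ and $z$ as above one has $m=2$, but $w=(1,1,1,1)$ has $|C\mathord{\cdot}w|=1$, so $\mathcal{L}_w=\{w,\tau(w)\}$ collapses to a single point and does not have the required cardinality $m$. Likewise, your disjointness argument in step three deduces $j\equiv j'\pmod{m}$ from $\sigma^j(a)=\sigma^{j'}(a)$, but when $m_0<m$ this congruence only holds modulo $m_0$; for instance $w=(1,-1,1,1)$ and $w'=\tau(w)=(1,1,-1,1)$ are distinct points with $|w|=|w'|=a$, yet $\mathcal{L}_w$ and $\mathcal{L}_{w'}$ overlap. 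Thus neither the size nor the disjointness of the pieces $\mathcal{L}_{w^{(k)}}$ is guaranteed. The balancing identity in step two survives, since it only needs $a$ to be $m$-periodic, but the assembly of $\mathcal{K}$ breaks down.

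The paper avoids this by not appealing to the period of $a$ at all: it keeps $z$ fixed and, for each $j$, produces the companion point $y$ by negating the single coordinate of $z$ in position $(j-1)m$, then sets $\mathcal{L}_j=\{\tau^i(y):0\le i<m\}$. This is a very specific sign variant tied to the block structure of $z$ itself, rather than an arbitrary $w$ with $|w|=a$, and it is what allows control over the orbit sizes and the disjointness of the $\mathcal{L}_j$. Your approach of counting all $2^{s-1}$ sign variants could be salvaged, but you would have to select generators carefully (e.g.\ from distinct $\tau$-orbits and with $|C\mathord{\cdot}w|\ge m$) rather than arbitrarily, and the counting inequality $\ell\le 2^{s-1}$ alone does not do this.
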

\begin{proof}
Consider a point $z$ in $\mathbb{P}^d_\circ$ and note that when $C\mathord{\cdot}z$ has cardinality $d$ then, we can take $\mathcal{K}=C\mathord{\cdot}z$. Assume that $|C\mathord{\cdot}z|$ is less than $d$. We can require without loss of generality that the first coordinate of $z$ is non-zero by exchanging it for a point from $C\mathord{\cdot}z$ with that property. Pick an integer $j$ such that
$$
2\leq{j}\leq\frac{d}{|C\mathord{\cdot}z|}\mbox{.}
$$

Consider the point $y$ such that $y_i=-z_i$ when $i$ is equal to $(j-1)|C\mathord{\cdot}z|$ and $y_i=z_i$ otherwise. Denote
$$
\mathcal{L}_j=\left\{\tau^i(y):0\leq{i}<|C\mathord{\cdot}z|\right\}\!\mbox{.}
$$

By construction,
$$
\kappa(C\mathord{\cdot}z)=\sum_{x\in\mathcal{L}_j}|x_i|\mbox{.}
$$

Further denote $\mathcal{L}_1=C\mathord{\cdot}z$ and call $\mathcal{K}$ the union of $\mathcal{L}_1$ to $\mathcal{L}_{d/|C\mathord{\cdot}z|}$. By construction, $\mathcal{K}$, and its partition into the sets $\mathcal{L}_i$ have the desired properties. 
\end{proof}

We now prove Theorem \ref{PPP.sec.0.thm.5}. Recall that, according to this theorem, 
there is a unique subset of $\mathbb{P}^d_\circ$ that solves our primitive point packing problem if and only if $k$ coincides with $\kappa(B(d,p)\cap\mathbb{P}^d_\circ)$ for some integer $p$.

\begin{proof}[Proof of Theorem \ref{PPP.sec.0.thm.5}]
When $k$ coincides with $\kappa\!\left(B_1(d,p)\cap\mathbb{P}^d_\circ\right)$ for some integer $p$, the result is provided by Corollary 3.2 from \cite{DezaPourninSukegawa2020}. Assume that
$$
k\neq\kappa\!\left(B_1(d,p)\cap\mathbb{P}^d_\circ\right)
$$
for all $p$ and consider a  subsets $\mathcal{X}$ of $\mathbb{P}^d_\circ$ such that $|\mathcal{X}|=\delta_z(d,k)$ and $\kappa(\mathcal{X})\leq{k}$. Let us prove there is another such subset of $\mathbb{P}^d_\circ$.

If $\mathcal{X}$ is not invariant under the action of $C$ on $\mathbb{R}^d$, then there exists a integer $i$ such that $\tau^i(\mathcal{X})\neq\mathcal{X}$. However, $\tau^i(\mathcal{X})$ and $\mathcal{X}$ have the same cardinality and the same image by $\kappa$. Therefore, the lemma holds in this case, and we assume from now on that $\mathcal{X}$ is invariant under the action of $C$ on $\mathbb{R}^d$.

We review two cases. Assume first that $\mathcal{X}$ is equal to $B(d,p)\cap\mathbb{P}^d_\circ$ for some integer $p$. By assumption, $k$ must then be greater than $\kappa(\mathcal{X})$. Consider the point $a$ in $\mathcal{X}$ whose first coordinate is equal to $p-1$, whose second coordinate is equal to $1$, and whose all other coordinates, if any, are equal to $0$. Observe that adding $1$ to any of the coordinates of $a$ other than the second one results in a primitive point $b$ of $1$-norm $p+1$. In addition, since $\mathcal{X}$ is invariant under the action of $C$ on $\mathbb{R}^d$, the equality
$$
\sum_{x\in\mathcal{X}}|x_i|=\kappa(\mathcal{X})
$$
holds for every integer $i$ such that $1\leq{i}\leq{d}$. Hence,
$$
\mathcal{Y}=(\mathcal{X}\mathord{\setminus}\{a\})\cup\{b\}
$$
satisfies $\kappa(\mathcal{Y})=\kappa(\mathcal{X})+1$. Now recall that $k$ is greater than $\kappa(\mathcal{X})$. Therefore, $\mathcal{Y}$ is a subset of $\mathbb{P}^d_\circ$ distinct from $\mathcal{X}$ such that $|\mathcal{Y}|=\delta_z(d,k)$ and $\kappa(\mathcal{Y})\leq{k}$, as desired. Now assume that $\mathcal{X}$ is distinct from $B(d,p)\cap\mathbb{P}^d_\circ$ for every integer $p$. In this case, there exists a point $y$ in $\mathcal{X}$ and a point $z$ in $\mathbb{P}^d_\circ\mathord{\setminus}\mathcal{X}$ such that $\|z\|_1\leq\|y\|_1$. As $\mathcal{X}$ is invariant under the action of $C$ on $\mathbb{R}^d$, it must admit $C\mathord{\cdot}y$ as a subset and be disjoint from $C\mathord{\cdot}z$. Hence, if the orbits of $y$ and $z$ under the action of $C$ on $\mathbb{R}^d$ have the same cardinality, the set
$$
\mathcal{Y}=(\mathcal{X}\mathord{\setminus}C\mathord{\cdot}y)\cup{C\mathord{\cdot}z}
$$
is a subset of $\mathbb{P}^d_\circ$ distinct from $\mathcal{X}$ such that $|\mathcal{Y}|=\delta_z(d,k)$ and $\kappa(\mathcal{Y})\leq{k}$, as desired. Assume that $C\mathord{\cdot}y$ and $C\mathord{\cdot}z$ do not have the same cardinality.

According to Proposition \ref{PPP.sec.5.prop.1}, there exists a subset $\mathcal{K}_y$ of $d$ points from $\mathbb{P}^d_\circ$ of the same $1$-norm than $y$, and a partition $\mathcal{P}_y$ of $\mathcal{K}_y$ into $d/|C\mathord{\cdot}y|$ subsets, all of cardinality $C\mathord{\cdot}y$, such that any of these subsets $\mathcal{L}$ satisfies $\kappa(C\mathord{\cdot}y)=\kappa(\mathcal{L})$. In addition, $\mathcal{L}$ is contained in the orbit of some point from $\mathbb{P}^d_\circ$ under the action of $C$ on $\mathbb{R}^d$. Therefore, either $\mathcal{L}$ is a subset of $\mathcal{X}$, or it is disjoint from it. If any of the sets $\mathcal{L}$ in $\mathcal{P}_y$ is disjoint from $\mathcal{X}$, then the set
$$
\mathcal{Y}=(\mathcal{X}\mathord{\setminus}C\mathord{\cdot}y)\cup\mathcal{L}\mbox{.}
$$
is distinct from $\mathcal{X}$. Yet, it satisfies $|\mathcal{Y}|=\delta_z(d,k)$ and, as $\kappa(C\mathord{\cdot}y)$ coincides with $\kappa(\mathcal{L})$, then so do $\kappa(\mathcal{Y})$ and $\kappa(\mathcal{X})$. Therefore, in this case the lemma holds.

Now assume that $\mathcal{K}_y$ is a subset of $\mathcal{X}$. We proceed in the same way with $C\mathord{\cdot}z$. More precisely, by Proposition \ref{PPP.sec.5.prop.1}, there exists a set $\mathcal{K}_z$ made up of $d$ points from $\mathbb{P}^d_\circ$ of the same $1$-norm than $z$ and a partition of it into $d/|C\mathord{\cdot}z|$ subsets of cardinality $C\mathord{\cdot}z$ such that, if $\mathcal{L}$ is one of these subsets, then for every $i$,
\begin{equation}\label{PPP.sec.0.thm.5.eq.1}
\kappa(C\mathord{\cdot}z)=\sum_{x\in\mathcal{L}}|x_i|\mbox{.}
\end{equation}

Again, $\mathcal{L}$ is either a subset of $\mathcal{X}$ or disjoint from it. If any of the sets, say $\mathcal{L}$, in $\mathcal{K}_z$ is a subset of $\mathcal{X}$, then the set
$$
\mathcal{Y}=(\mathcal{X}\mathord{\setminus}\mathcal{L})\cup{C\mathord{\cdot}z}\mbox{.}
$$
is distinct from $\mathcal{X}$. Moreover by construction, it satisfies $|\mathcal{Y}|=\delta_z(d,k)$. Further note that, since, (\ref{PPP.sec.0.thm.5.eq.1}) holds for every $i$, $\kappa(\mathcal{Y})$ must be equal to $\kappa(\mathcal{X})$. As a consequence, the lemma also holds in this case.

Assume that $\mathcal{K}_z$ is disjoint from $\mathcal{X}$. In this case the set
$$
\mathcal{Y}=(\mathcal{X}\mathord{\setminus}\mathcal{K}_y)\cup\mathcal{K}_z
$$
has cardinality $\delta_z(d,k)$. Moreover, by construction, for every $i$,
$$
\frac{d\kappa(C\mathord{\cdot}y)}{|C\mathord{\cdot}y|}=\sum_{x\in\mathcal{K}_y}|x_i|\mbox{.}
$$

However, according to Proposition \ref{PPP.sec.3.prop.1}, the left-hand side of this equality is precisely the $1$-norm of $y$. Similarly, for every $i$,
$$
\|z\|_1=\sum_{x\in\mathcal{K}_z}|x_i|\mbox{.}
$$

Now recall that $\|z\|_1\leq\|y\|_1$. As a consequence, $\kappa(\mathcal{Y})\leq\kappa(\mathcal{X})$.
\end{proof}

\section{Asymptotic estimates}\label{PPP.sec.1.5}

In this section we provide sharp asymptotic estimates for some of the quantities we studied in the previous sections. We begin with the asymptotic behavior of $c_\psi(p,d)$ when $d$ is fixed and $p$ grows large. Recall that this quantity is the number of primitive points of $1$-norm $p$ contained in the positive orthant $]0,+\infty[^d$ or, in combinatorial terms, the number of compositions of $p$ into $d$ relatively prime integers. Let us first observe that there is no asymptotics in the $2$-dimensional case. Indeed, according to Theorem \ref{PPP.sec.1.thm.1}, when $p\geq2$,
$$
c_\psi(p,2)=J_1(p)\mbox{.}
$$

Recall that $J_1(p)$ is Euler's totient function. In particular, we have the following, where $\gamma$ stands for Euler's constant (see for instance \cite{HardyWright1938}).
\begin{prop}\label{PPP.sec.1.prop.2}
The following equalities hold.
\begin{enumerate}
\item[(i)] $\displaystyle\liminf_{p\to\infty}\frac{c_\psi(p,2)}{p}\log\log p=e^{-\gamma}$,
\item[(ii)] $\displaystyle\limsup_{p\to\infty}\frac{c_\psi(p,2)}{p}=1$.
\end{enumerate}
\end{prop}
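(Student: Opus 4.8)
The plan is to reduce both claims to classical facts about Euler's totient function. By Theorem~\ref{PPP.sec.1.thm.1} we have $c_\psi(p,2)=J_1(p)$ for $p\geq2$, and $J_1$ is exactly Euler's totient function $\varphi$, so that $c_\psi(p,2)/p=\prod_{n\mid p}(1-1/n)$, the product being taken over the primes $n$ dividing $p$. Part (ii) is then elementary: since every factor $1-1/n$ is less than $1$, the ratio $c_\psi(p,2)/p$ never exceeds $1$, whence the limit superior is at most $1$. To see that it equals $1$, I would restrict to prime values of $p$, for which $c_\psi(p,2)/p=1-1/p$; letting $p$ run through the primes, this tends to $1$, so the limit superior is exactly $1$.

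For part (i), the key tools are Mertens' product theorem, $\prod_{n\leq x}(1-1/n)\sim e^{-\gamma}/\log x$ (the product over primes $n$), together with the prime number theorem in the form $\sum_{n\leq x}\log n\sim x$ (both available in \cite{HardyWright1938}). For the upper bound on the limit inferior I would evaluate $c_\psi(p,2)/p$ at the primorials $p=\prod_{n\leq x}n$: for such $p$ one has $c_\psi(p,2)/p=\prod_{n\leq x}(1-1/n)\sim e^{-\gamma}/\log x$, while $\log p=\sum_{n\leq x}\log n\sim x$ gives $\log\log p\sim\log x$. Multiplying the two estimates, $(c_\psi(p,2)/p)\log\log p\to e^{-\gamma}$ along this sequence, so the limit inferior is at most $e^{-\gamma}$.

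For the matching lower bound I would take an arbitrary $p$ with distinct prime factors $q_1<\cdots<q_r$ and use the monotonicity observation that replacing each $q_i$ by the $i$-th prime $p_i$ can only decrease the product, giving $c_\psi(p,2)/p\geq\prod_{n\leq p_r}(1-1/n)\sim e^{-\gamma}/\log p_r$. Since $p$ is divisible by $r$ distinct primes, $\log p\geq\sum_{n\leq p_r}\log n\sim p_r$, so that $\log\log p\geq\log p_r+o(1)$; combining these two estimates yields $(c_\psi(p,2)/p)\log\log p\geq e^{-\gamma}(1-o(1))$ as $p_r\to\infty$. The case where $r$ stays bounded must be dismissed separately: there $c_\psi(p,2)/p$ is bounded below by a positive constant while $\log\log p\to\infty$, so such $p$ contribute arbitrarily large values and do not affect the limit inferior.

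The hard part will be this lower bound in (i): specifically, justifying that the primorial configuration minimizes $\prod_{n\mid p}(1-1/n)$ and carefully tracking the $o(1)$ terms that relate the largest prime factor $p_r$ to $\log\log p$ through Mertens' theorem and the prime number theorem. Part (ii) and the upper bound in (i) are routine once the reduction $c_\psi(p,2)=\varphi(p)$ is in place.
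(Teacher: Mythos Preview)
Your proposal is correct and essentially reproduces the classical proof of Landau's theorem on the minimal order of Euler's totient (as found, e.g., in Hardy--Wright, Theorem~328). The paper itself does not prove this proposition at all: it observes that $c_\psi(p,2)=J_1(p)$ is Euler's totient function and simply cites \cite{HardyWright1938}, treating both statements as standard facts from analytic number theory. So you have supplied a genuine proof where the paper only gives a reference.

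One small point on the lower bound in (i): your inequality $\log p\geq\sum_{n\leq p_r}\log n$ needs the intermediate step $\log p\geq\sum_i\log q_i\geq\sum_i\log p_i=\vartheta(p_r)$, which you implicitly use but do not spell out; and the passage from $\log p\geq\vartheta(p_r)\sim p_r$ to $\log\log p\geq\log p_r+o(1)$ requires that the $o(1)$ in $\vartheta(p_r)=p_r(1+o(1))$ be absorbed after taking a further logarithm, which is fine but worth making explicit. The separation into the cases ``$r$ bounded'' versus ``$r\to\infty$'' is the right way to organize the argument, though in a full write-up you would phrase it as: given $\varepsilon>0$, choose $R_0$ large enough that the asymptotic estimates hold to within $\varepsilon$ whenever $r>R_0$, and then choose $P_0$ so that $p>P_0$ with $r\leq R_0$ forces $(\varphi(p)/p)\log\log p>e^{-\gamma}$.
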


Let us now consider the case of the dimensions above $2$. It is an immediate consequence of Theorem \ref{PPP.sec.0.thm.1} that
\begin{equation}\label{PPP.sec.1.5.eq.1}
\left|S(d,p)\cap\mathbb{P}^d_\circ\right|=\frac{1}{2}\sum_{j=1}^{d}2^j{d\choose{j}}c_\psi(p,j)\mbox{.}
\end{equation}

Moreover, it follows from Theorem 2.2 from \cite{DezaPourninSukegawa2020} that
\begin{equation}\label{PPP.sec.1.5.eq.2}
\lim_{p\to\infty}\frac{\left|B(d,p)\cap\mathbb{P}^d_\circ\right|}{p^d}=\frac{2^{d-1}}{d!\zeta(d)}\mbox{,}
\end{equation}
where $\zeta$ stands for Riemann's zeta function. These two equalities provide the asymptotic behavior of $c_\psi(p,d)$ when $d$ is fixed and $p$ grows large.

\begin{thm}\label{PPP.sec.1.5.thm.1}
When $d\geq3$, $\displaystyle\lim_{p\to\infty}\frac{c_\psi(p,d)}{p^{d-1}}=\frac{1}{(d-1)!\zeta(d)}$.
\end{thm}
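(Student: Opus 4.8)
The plan is to recover $c_\psi(p,d)$ from the number of primitive points on the sphere $S(d,p)$, whose count is given by (\ref{PPP.sec.1.5.eq.1}), and then to obtain the asymptotics of that count by differencing the ball asymptotics supplied by (\ref{PPP.sec.1.5.eq.2}). First I would isolate the top term of (\ref{PPP.sec.1.5.eq.1}). The summand $j=d$ contributes exactly $2^{d-1}c_\psi(p,d)$, while each summand with $j\leq d-1$ is, by Theorem \ref{PPP.sec.1.thm.1}, at most a constant depending only on $d$ times $J_{j-1}(p)\leq p^{j-1}\leq p^{d-2}$. Hence
$$
\left|S(d,p)\cap\mathbb{P}^d_\circ\right|=2^{d-1}c_\psi(p,d)+O\!\left(p^{d-2}\right)\mbox{,}
$$
so it suffices to prove that $\left|S(d,p)\cap\mathbb{P}^d_\circ\right|\sim\tfrac{2^{d-1}}{(d-1)!\zeta(d)}p^{d-1}$ and then divide by $2^{d-1}$, the $O(p^{d-2})$ error being negligible against $p^{d-1}$.

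For the sphere count I would use, as in the proof of Lemma \ref{PPP.sec.1.lem.1}, that $B(d,p)\cap\mathbb{P}^d_\circ$ is the disjoint union of the sets $S(d,i)\cap\mathbb{P}^d_\circ$ for $1\leq i\leq p$, so that
$$
\left|S(d,p)\cap\mathbb{P}^d_\circ\right|=\left|B(d,p)\cap\mathbb{P}^d_\circ\right|-\left|B(d,p-1)\cap\mathbb{P}^d_\circ\right|\mbox{.}
$$
Feeding in (\ref{PPP.sec.1.5.eq.2}), the formal computation gives
$$
\left|S(d,p)\cap\mathbb{P}^d_\circ\right|\approx\frac{2^{d-1}}{d!\zeta(d)}\bigl(p^d-(p-1)^d\bigr)\sim\frac{2^{d-1}}{d!\zeta(d)}\,d\,p^{d-1}=\frac{2^{d-1}}{(d-1)!\zeta(d)}\,p^{d-1}\mbox{,}
$$
which is exactly the target; combining with the first step then yields the claimed limit.

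The hard part will be justifying the differencing in the second step, since (\ref{PPP.sec.1.5.eq.2}) alone is not enough: a relation $a_p\sim Lp^d$ does \emph{not} in general force $a_p-a_{p-1}\sim dLp^{d-1}$, because a $p^{d-1}$-order correction to $\left|B(d,p)\cap\mathbb{P}^d_\circ\right|$ could oscillate at the very order one is trying to isolate. Making this passage rigorous—controlling that correction term—is the crux of the argument.

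As a cross-check that also pinpoints the constant and, crucially, the role of the hypothesis $d\geq3$, I would analyze the leading contribution directly through Theorem \ref{PPP.sec.1.thm.1}: it is $\tfrac{1}{(d-1)!}J_{d-1}(p)$, with $J_{d-1}(p)=p^{d-1}\prod_{n\mid p}\bigl(1-n^{-(d-1)}\bigr)$. For $d\geq3$ the exponent satisfies $d-1\geq2$, so the product over primes $\prod_{n}\bigl(1-n^{-(d-1)}\bigr)=1/\zeta(d-1)$ converges and $J_{d-1}(p)/p^{d-1}$ stays bounded away from $0$; this is exactly what fails at $d=2$, where $\varphi(p)/p$ can become arbitrarily small, in line with the oscillation recorded in Proposition \ref{PPP.sec.1.prop.2}. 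I would expect the careful handling of this arithmetic oscillation, rather than the elementary algebra above, to be the delicate point that separates the regime $d\geq3$ from the two-dimensional case.
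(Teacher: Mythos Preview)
Your overall strategy coincides with the paper's: isolate the top term $2^{d-1}c_\psi(p,d)$ in the sphere count (\ref{PPP.sec.1.5.eq.1}), control the remaining terms (you do this directly via $J_{j-1}(p)\le p^{j-1}$; the paper does it by induction on $d$), and then invoke the sphere asymptotic
\[
\left|S(d,p)\cap\mathbb{P}^d_\circ\right|\sim\frac{2^{d-1}}{(d-1)!\,\zeta(d)}\,p^{d-1}\mbox{,}
\]
which the paper simply declares to follow from (\ref{PPP.sec.1.5.eq.2}). You are right to single out the differencing of (\ref{PPP.sec.1.5.eq.2}) as the crux: it is exactly the step the paper does not justify.

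However, that step is not merely delicate---it fails, and your own cross-check already shows why. By Theorem~\ref{PPP.sec.1.thm.1} the dominant contribution to $c_\psi(p,d)/p^{d-1}$ is
\[
\frac{1}{(d-1)!}\,\frac{J_{d-1}(p)}{p^{d-1}}=\frac{1}{(d-1)!}\prod_{q\mid p}\!\left(1-\frac{1}{q^{d-1}}\right)\!\mbox{,}
\]
and this product does \emph{not} converge as $p\to\infty$: along primes it tends to $1$, along primorials to $1/\zeta(d-1)$. In particular, along primes $c_\psi(p,d)/p^{d-1}\to 1/(d-1)!$, which is $\zeta(d)$ times the claimed limit. (Concretely, for $d=3$ one checks from (\ref{PPP.sec.1.5.eq.1}) that $|S(3,p)\cap\mathbb{P}^3_\circ|=2J_2(p)$ for every $p\ge2$, and $J_2(p)/p^2$ oscillates between $6/\pi^2$ and $1$.) The gap you located is therefore unfillable: the pointwise limit in the statement does not exist, and both your argument and the paper's share this defect because it originates in the statement itself. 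What \emph{does} follow cleanly from (\ref{PPP.sec.1.5.eq.2}) is the averaged version $\sum_{p\le N}c_\psi(p,d)\sim N^d/\bigl(d!\,\zeta(d)\bigr)$.
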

\begin{proof}
Assume that $d\geq3$. According to (\ref{PPP.sec.1.5.eq.1}),
\begin{equation}\label{PPP.sec.1.5.thm.1.eq.1}
\frac{c_\psi(p,d)}{p^{d-1}}=\frac{\left|S(d,p)\cap\mathbb{P}^d_\circ\right|}{2^{d-1}p^{d-1}}-\sum_{j=1}^{d-1}\frac{1}{2^{d-j}}{d\choose{j}}\frac{c_\psi(p,j)}{p^{d-1}}\mbox{.}
\end{equation}

Moreover, it follows from (\ref{PPP.sec.1.5.eq.2}) that
\begin{equation}\label{PPP.sec.1.5.thm.1.eq.2}
\displaystyle\lim_{p\to\infty}\frac{\left|S(d,p)\cap\mathbb{P}^d_\circ\right|}{p^{d-1}}=\frac{2^{d-1}}{(d-1)!\zeta(d)}\mbox{.}
\end{equation}

Using (\ref{PPP.sec.1.5.thm.1.eq.1}) and (\ref{PPP.sec.1.5.thm.1.eq.2}), the theorem can be established by induction on $d$. Indeed, recall that $c_\psi(p,1)=0$ when $p\geq2$. In addition, by Proposition \ref{PPP.sec.1.prop.2},
$$
\displaystyle\lim_{p\to\infty}\frac{c_\psi(p,2)}{p^2}=0\mbox{.}
$$

Hence, combining (\ref{PPP.sec.1.5.thm.1.eq.1}) and (\ref{PPP.sec.1.5.thm.1.eq.2}) proves the theorem when $d$ is equal to $3$. Now assume that $d$ is greater than $3$ and that the theorem holds for all dimensions less than $d$ and greater than $2$. In this case, by induction,
$$
\displaystyle\lim_{p\to\infty}\frac{c_\psi(p,j)}{p^{d-1}}=0
$$
when $3\leq{j}<d$, and according to Proposition \ref{PPP.sec.1.prop.2},
$$
\displaystyle\lim_{p\to\infty}\frac{c_\psi(p,2)}{p^{d-1}}=0\mbox{.}
$$

As  $c_\psi(p,1)=0$ when $p$ is greater than $1$, we just need to combine (\ref{PPP.sec.1.5.thm.1.eq.1}) and (\ref{PPP.sec.1.5.thm.1.eq.2}) in order to get the desired asymptotic estimate.
\end{proof}

We also give the asymptotic behavior of $\left|B(d,p)\cap\mathbb{P}^d_\circ\right|$ and $\kappa\!\left(B(d,p)\cap\mathbb{P}^d_\circ\right)$ when $p$ is fixed and $d$ grows large. More precisely, we prove the following.

\begin{lem}\label{PPP.sec.1.5.lem.1}
$\displaystyle\lim_{d\to\infty}\frac{\left|B(d,p)\cap\mathbb{P}^d_\circ\right|}{d^p}=\frac{2^{p-1}}{p!}$ and $\displaystyle\lim_{d\to\infty}\frac{\kappa\!\left(B(d,p)\cap\mathbb{P}^d_\circ\right)}{d^{p-1}}=\frac{2^{p-1}}{(p-1)!}$.
\end{lem}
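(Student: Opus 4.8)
The plan is to read off the dominant term, as $d\to\infty$ with $p$ fixed, from the explicit expressions supplied by Proposition~\ref{PPP.sec.1.prop.1}. For the first limit, I start from
$$
\left| B(d,p)\cap\mathbb{P}^d_\circ\right|=\frac{1}{2}\sum_{j=1}^{d}2^j{d\choose{j}}\sum_{i=j}^pc_\psi(i,j)\mbox{.}
$$
Since $c_\psi(i,j)$ vanishes whenever $i<j$, the inner sum is empty as soon as $j>p$, so for $d\geq{p}$ only the indices $j$ with $1\leq{j}\leq{p}$ contribute. For each such fixed $j$, the binomial coefficient obeys ${d\choose{j}}\sim{d^j}/{j!}$ as $d\to\infty$, making the $j$-th summand of order $d^j$. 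The dominant contribution therefore comes from $j=p$, every other term being $O(d^{p-1})$.

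For $j=p$ the inner sum collapses to its single index $i=p$ and leaves the factor $c_\psi(p,p)$. The key observation here is that $c_\psi(p,p)$ is the number of primitive points of $1$-norm $p$ in the positive orthant of $\mathbb{R}^p$; the only point of $\mathbb{R}^p$ with positive integer coordinates summing to $p$ is the all-ones vector, which is primitive, so $c_\psi(p,p)=1$. Hence the leading term of $\left| B(d,p)\cap\mathbb{P}^d_\circ\right|$ is $\tfrac{1}{2}\,2^p\,d^p/p!$, and dividing by $d^p$ and letting $d\to\infty$ yields the first claimed value $2^{p-1}/p!$.

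The second limit follows the same pattern applied to part (ii) of Proposition~\ref{PPP.sec.1.prop.1},
$$
\kappa\!\left(B(d,p)\cap\mathbb{P}^d_\circ\right)=\frac{1}{2d}\sum_{j=1}^{d}2^j{d\choose{j}}\sum_{i=j}^pic_\psi(i,j)\mbox{.}
$$
Once again only $j\leq{p}$ contributes, the dominant term is $j=p$, and its inner sum collapses to $p\,c_\psi(p,p)=p$; the extra factor $1/d$ lowers the order by one, so the leading term is $\tfrac{1}{2d}\,2^p\,(d^p/p!)\,p=2^{p-1}d^{p-1}/(p-1)!$, giving the second claimed value after dividing by $d^{p-1}$. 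There is no real obstacle here: the only point needing a word of justification is that the terms with $j<p$ are negligible, which is immediate since each is $O(d^j)$ with $j<p$ (respectively $O(d^{j-1})$ after the prefactor $1/d$). The entire argument thus amounts to extracting the top-degree coefficient in $d$ from the two formulas, together with the elementary evaluation $c_\psi(p,p)=1$.
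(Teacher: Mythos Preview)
Your argument is correct. Both you and the paper extract the dominant $j=p$ term from the explicit formulas of Proposition~\ref{PPP.sec.1.prop.1}, using that $c_\psi(p,p)=1$ and $\binom{d}{j}\sim d^j/j!$, so the underlying idea is the same. The organization differs, however: the paper peels off the outermost sphere via $\left|B(d,p)\cap\mathbb{P}^d_\circ\right|=\left|B(d,p-1)\cap\mathbb{P}^d_\circ\right|+\left|S(d,p)\cap\mathbb{P}^d_\circ\right|$ and then inducts on $p$ to kill the first summand, invoking the external base case $\left|B(d,2)\cap\mathbb{P}^d_\circ\right|=d^2$ from \cite{DezaManoussakisOnn2018}; it proceeds analogously for $\kappa$ via the increment formula and a second induction. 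You instead work directly with the full double sum, observe that each term with $j<p$ is $O(d^j)$ (respectively $O(d^{j-1})$) because the inner sum $\sum_{i=j}^p c_\psi(i,j)$ is a constant independent of $d$, and read off the leading coefficient in one pass. Your route is a bit more economical: it needs no induction and no external base case, at the mild cost of having to say explicitly that the lower-$j$ terms carry bounded coefficients. Either way the computation is the same in substance.
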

\begin{proof}
Observe that, when $d>p$, $c_\psi(p,d)=0$. In this case, (\ref{PPP.sec.1.5.eq.1}) yields
\begin{equation}\label{PPP.sec.1.5.lem.1.eq.1}
\frac{\left|B(d,p)\cap\mathbb{P}^d_\circ\right|}{d^p}=\frac{\left|B(d,p-1)\cap\mathbb{P}^d_\circ\right|}{d^p}+\frac{1}{2}\sum_{j=1}^p\frac{2^j}{d^p}{d\choose{j}}c_\psi(p,j)\mbox{.}
\end{equation}

The announced asymptotic behavior for $\left|B(d,p)\cap\mathbb{P}^d_\circ\right|$ can be obtained from this equality by induction on $p$. Indeed, as observed in \cite{DezaManoussakisOnn2018} (see theorem 3.2 therein), $\left|B(d,2)\cap\mathbb{P}^d_\circ\right|=d^2$, which provides the base case for the induction.

Now assume that $p$ is greater than $2$ and note that the limit
$$
\lim_{d\to\infty}\frac{1}{d^p}{d\choose j}
$$
is equal to $0$ when $j<p$ and to $1/p!$ when $j=p$. Moreover, by induction,
$$
\lim_{d\to\infty}\frac{\left|B(d,p-1)\cap\mathbb{P}^d_\circ\right|}{d^p}=0\mbox{.}
$$

As in addition, $c_\psi(p,p)=1$, letting $d$ grow large in (\ref{PPP.sec.1.5.lem.1.eq.1}) results in the desired asymptotic estimate for $\left|B(d,p)\cap\mathbb{P}^d_\circ\right|$ when $d$ is fixed and $p$ goes to infinity. Now recall that, as a consequence of Proposition \ref{PPP.sec.1.prop.1},
$$
\kappa\!\left(B(d,p)\cap\mathbb{P}^d_\circ\right)-\kappa\!\left(B(d,p-1)\cap\mathbb{P}^d_\circ\right)=\frac{p\!\left|S(d,p)\cap\mathbb{P}^d_\circ\right|}{d}\mbox{,}
$$

By another induction on $p$, this equality, together with our asymptotic estimate for $\left|B(d,p)\cap\mathbb{P}^d_\circ\right|$ provides the announced estimate for $\kappa\!\left(B(d,p)\cap\mathbb{P}^d_\circ\right)$. Note that the base case is given, again, by Theorem 3.2 from \cite{DezaManoussakisOnn2018}.
\end{proof}

The following estimate is an immediate consequence of Lemma \ref{PPP.sec.1.5.lem.1}.

\begin{thm}\label{PPP.sec.1.5.thm.2}
$\displaystyle\lim_{d\to\infty}\frac{1}{d}\frac{\left|B(d,p)\cap\mathbb{P}^d_\circ\right|}{\kappa\!\left(B(d,p)\cap\mathbb{P}^d_\circ\right)}=\frac{1}{p}$.
\end{thm}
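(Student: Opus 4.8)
The plan is to deduce the limit directly from Lemma~\ref{PPP.sec.1.5.lem.1} by writing the quantity of interest as a ratio of two expressions, each of which is already normalized by an appropriate power of $d$. Concretely, the first step is the algebraic identity
$$
\frac{1}{d}\frac{\left|B(d,p)\cap\mathbb{P}^d_\circ\right|}{\kappa\!\left(B(d,p)\cap\mathbb{P}^d_\circ\right)}=\frac{\left|B(d,p)\cap\mathbb{P}^d_\circ\right|\big/d^p}{\kappa\!\left(B(d,p)\cap\mathbb{P}^d_\circ\right)\big/d^{p-1}}\mbox{,}
$$
which is valid for every $d$ for which the denominator is nonzero, and in particular for all $d$ large enough. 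The point of this rewriting is that the numerator and denominator on the right are exactly the two sequences whose limits are supplied by Lemma~\ref{PPP.sec.1.5.lem.1}.

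The second step is to pass to the limit as $d\to\infty$. By Lemma~\ref{PPP.sec.1.5.lem.1}, the numerator tends to $2^{p-1}/p!$ and the denominator tends to $2^{p-1}/(p-1)!$. Since the latter limit is a strictly positive real number, the quotient rule for limits applies, and the limit of the ratio equals the ratio of the limits:
$$
\lim_{d\to\infty}\frac{1}{d}\frac{\left|B(d,p)\cap\mathbb{P}^d_\circ\right|}{\kappa\!\left(B(d,p)\cap\mathbb{P}^d_\circ\right)}=\frac{2^{p-1}/p!}{2^{p-1}/(p-1)!}=\frac{(p-1)!}{p!}=\frac{1}{p}\mbox{,}
$$
as claimed. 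The factor $2^{p-1}$ cancels, and the ratio of factorials collapses to $1/p$.

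There is essentially no obstacle here: the entire content of the theorem has been packed into Lemma~\ref{PPP.sec.1.5.lem.1}, and the remaining work is the elementary observation that the stated quantity is the ratio of the two normalized sequences. The only point requiring the slightest care is to confirm that the limiting denominator $2^{p-1}/(p-1)!$ does not vanish, so that the quotient rule for limits is legitimately invoked; this is immediate since $p$ is a fixed positive integer. Accordingly, I would present this as a short corollary-style argument rather than a substantive proof.
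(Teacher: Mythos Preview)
Your proof is correct and matches the paper's approach exactly: the paper simply states that the theorem is an immediate consequence of Lemma~\ref{PPP.sec.1.5.lem.1} without writing out any details. Your explicit rewriting as a ratio of the two normalized sequences and application of the quotient rule is precisely the intended argument.
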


\bibliography{PrimitivePointPacking}
\bibliographystyle{ijmart}

\end{document}